\documentclass[11pt]{amsart}

\usepackage{amsmath,amssymb,amscd,amsthm,stmaryrd}

\usepackage{amssymb}
\usepackage{graphicx}
\usepackage{amsfonts}
\usepackage{bbm}
\usepackage{mathrsfs}
\usepackage[colorlinks=true, pdfstartview=FitV, linkcolor=blue, citecolor=blue, urlcolor=blue]{hyperref}
\usepackage{color}
\usepackage{graphicx,epstopdf,verbatim}
\usepackage[cal=boondoxo]{mathalfa}

\usepackage{tikz-qtree}

\usepackage{stackengine}
\stackMath
\newcommand\tsup[2][2]{%
 \def\useanchorwidth{T}%
  \ifnum#1>1%
    \stackon[-.5pt]{\tsup[\numexpr#1-1\relax]{#2}}{\scriptscriptstyle\sim}%
  \else%
    \stackon[.5pt]{#2}{\scriptscriptstyle\sim}%
  \fi%
}

\allowdisplaybreaks

\newcounter{dummy} \numberwithin{dummy}{section}

\newtheorem{definition}[dummy]{Definition}
\newtheorem{lemma}[dummy]{Lemma}
\newtheorem{theorem}[dummy]{Theorem}
\newtheorem{corollary}[dummy]{Corollary}
\newtheorem{proposition}[dummy]{Proposition}
\newtheorem{example}[dummy]{Example}
\newtheorem{remark}[dummy]{Remark}

\newtheorem{ansatz}[dummy]{Ansatz}
\newtheorem{innercustomthm}{Ansatz}
\newenvironment{mythm}[1]{\renewcommand\theinnercustomthm{#1}\innercustomthm} {\endinnercustomthm}

\DeclareMathOperator{\sgn}{sgn}

\providecommand{\keywords}[1]
{
  \small	
  \textbf{\textit{Keywords---}} #1
}

\newcommand{\R}{\mathbb{R}}

\newcommand{\T}{\mathcal{T}}

\begin{document}
\tikzset{every tree node/.style={minimum width=2em,draw},
         blank/.style={draw=none},
         edge from parent/.style=
         {draw,edge from parent path={(\tikzparentnode) -- (\tikzchildnode)}},
         level distance=1.5cm}

\title{Further evidence towards the Fourier Entropy-Influence conjecture}
\author{Mar\'ia Jos\'e Gonz\'alez }
\thanks{M.J.G. was supported in part by the Spanish Ministerio de Ciencia e Innovaci\'on (grant no. PID2021-123151NB-I00), and by the
grant ``Operator Theory: an interdisciplinary approach", reference ProyExcel$_-$00780, a project financed in the 2021 call for Grants for
Excellence Projects, under a competitive bidding regime, aimed at entities qualified as Agents of the Andalusian Knowledge System,
in the scope of the Andalusian Research, Development and Innovation Plan (PAIDI 2020). Counseling of University, Research and
Innovation of the Junta de Andaluc\'ia.}
\address{Departamento de Matemáticas, Universidad de C\'adiz}
\email{majose.gonzalez@uca.es}
\author{Paul MacManus}
\address{Trevally Capital}
\email{paul.macmanus@gmail.com}
\author{Mar\'ia Cristina Pereyra}
\address{Department of Mathematics and Statistics, University of New Mexico}
\email{crisp@math.unm.edu}

\subjclass[2020]{{Primary 43A75, Secundary 06E30, 94D10}}

\begin{abstract} The Fourier Entropy-Influence (FEI) conjecture states that the Fourier entropy of Boolean functions is uniformly bounded by their total influence. It has been verified for canonical examples such as disjoint tribes and for some classes of Boolean functions such as symmetric functions and read-$k$ decision trees (with a constant that depends linearly on $k$). In this note we present new classes of Boolean functions that  verify the FEI conjecture. The key element  is an inequality controlling  the difference between the entropy of a function $f$ and the average of the entropies of $f^{\pm}$, the sub-functions obtained by setting $x_m=\pm1$ for some $m$, by the  $m$-influence of $f$. If this key inequality were to hold for Boolean functions, then the full FEI conjecture would follow by induction. We introduce the notion of a stopping binary tree and observe that functions that satisfy the key inequality  at the branching nodes of the tree and the FEI conjecture at the stopping nodes will satisfy the FEI conjecture. We identify some classes of Boolean functions that  fit this framework: the \emph{$\delta$-tribes functions}, the monotone Boolean  functions with the \emph{tribe separation property}, and the Boolean functions with the \emph{semi-separation property}, and, along the way, demonstrate some results that we hope the experts in this fascinating field might find useful.

\end{abstract}

\keywords{{Boolean functions, monotone Boolean functions,  Fourier Entropy-Influence conjecture, $\delta$-tribes functions, tribes separation property, separation property}}

\maketitle

\tableofcontents


\section{Introduction}\label{sec:intro}

The $n$-dimensional Hamming  or Boolean cube is  $\{-1,1\}^{n}$. Let  $\mathcal{B}_n$ denote the  Boolean functions $f$ defined on the $n$-dimensional Hamming cube,  i.e. $f:\{-1,1\}^n\to \{-1,1\}$.

Let $ [n]:=\{1,2,\dots ,n\}$. Given $S\subset [n]$, the   \emph{parity function associated to}  $S$ is given by   $\chi_S(x):=\prod_{i\in S} x_i $ for all $S\neq \emptyset$ and $x=(x_1,\dots,x_n)\in\{-1,1\}^n$, and $\chi_{\emptyset}(x)=1$.  The collection of parity functions, $\{\chi_S: S\subset [n]\}$, forms an orthonormal  basis for the set of real-valued functions defined on the Hamming cube, with inner product $\langle f,g\rangle =\frac{1}{2^n}\sum_{x\in\{-1,1\}^n} f(x)g(x)$. Therefore, each such $f$ has a Fourier expansion
 $f= \sum_{S\in [n]} \widehat{f}(S)\, \chi_S$, where $\widehat{f}(S)=\langle f,\chi_S\rangle$.

 The \emph{total influence} and  \emph{spectral entropy} of $f:\{-1,1\}^n\to \R$, denoted respectively by ${\bf I}(f)$ and ${\bf H}(f)$,  are given in terms of the Fourier coefficients of $f$,  by 
 \[ {\bf I}(f)=\sum_{S\in [n]} |S| |\widehat{f}(S)|^2,  \quad\quad\quad{\bf H}(f) = \sum_{S\subset [n]} |\widehat{f}(S)|^2\log \frac{1}{|\widehat{f}(S)|^2},\]
 where $\log t = \log_2 t$ for $t>0$, and $ t\log \frac{1}{t}$ is defined to be 0 when $t=0$.

The famous Fourier Entropy-Influence  (FEI) conjecture of Friedgut and Kalai~\cite{FK96} states that for Boolean functions the entropy can be uniformly bounded by the total influence.\\

 \noindent{\bf FEI Conjecture.}
  \emph{ There is  $K>0$, independent of the dimension,  such that  for all Boolean functions $f$ we have that   
   $${\bf H}(f)\leq K \, {\rm {\bf I}}(f).$$}

As described in  \cite{ACK+20}:  ``Intuitively, the Fourier entropy measures how `spread out' the Fourier distribution is over the $2^n$ subsets of $[n]$ and the total influence measures the concentration of the Fourier distribution on the `high' level coefficients. Informally, the FEI conjecture states that Boolean functions whose Fourier distribution is well `spread out'  (i.e., functions with large Fourier entropy) must have significant Fourier weight on the high-degree monomials (i.e., their total influence is large)". For a quantification of this statement see \cite[Commentary surrounding (1.2)]{Han25}.

A well known first approach to the FEI conjecture provides a logarithmic bound. More precisely,  if $f\in\mathcal{B}_n$ then ${\bf H}(f)\leq c \log n\, {\bf I}(f)$.  If the FEI conjecture were proven to hold for Boolean functions, we could deduce from it the celebrated KKL (Kahn-Kalai-Linial) Theorem~\cite{KKL88}. The KKL Theorem  states that there is a variable whose influence is larger than $n/\log n$ times the variance of any  function $f\in\mathcal{B}_n$. The KKL Theorem allows one to show the  related weaker Fourier min-Entropy-Influence (FMEI) conjecture holds for monotone Boolean functions, which is still unknown for Boolean functions.  See~\cite{KKL+20} and the survey \cite[Sections 4-5]{GMcP25}.

The  FEI conjecture is  false for real-valued functions on the Hamming cube (as opposed to Boolean functions).  It suffices to concentrate the Fourier coefficients on  singletons (small sets), say $\widehat{f}(\{i\})=1/\sqrt{n}$ for all $i\in [n]$ and all other Fourier coefficients are zero. Then ${\bf H}(f)=\log n$ and $ {\bf I}(f)=1$. However, one cannot concentrate all Fourier coefficients on singletons for Boolean functions (see~\cite[Exercise 1.5]{O'Do21}). 

 The FEI conjecture holds for  ANDs, ORs, majority  and uniform disjoint tribes functions, via straightforward calculations. It also holds for random disjunction of conjunctions of Boolean (DNF's) \cite{KLW10}, symmetric functions and read-once decision trees \cite{OWZ11}, and read-once formulas \cite{OT13, CKL+13} using more subtle arguments.  There are results that fall short of the conjecture, as the constant $K$ depends on some additional parameter: for example, Boolean functions that can be computed by a read-$k$ decision tree ($K=9k$), and  Boolean functions that can be computed by a decision tree whose expected depth is $d$ ($K=12d$), see \cite{WWW14}.

 The conjecture has not been proven for monotone functions, a natural and important class which can be represented by  general tribes. However,  it is known for uniform disjoint tribes, where direct computations can be done \cite{BL90}, and for non-uniform disjoint tribes using a beautiful composition theorem of O'Donnell and Tan~\cite{OT13}.   In this article, we present  proper subfamilies of  Boolean functions, the \emph{$\delta$-tribes functions},   the monotone Boolean functions with the  \emph{tribes separation property},  and  the Boolean functions with the \emph{semi-separation property},  for which the FEI conjecture holds. These families include the non-uniform disjoint tribes, as well as some general tribes functions and  non-monotone functions outside the classes for which positive results are known. This adds evidence towards the validity of the conjecture. 

To obtain our results,  we draw inspiration from a well-known induction argument approach to the FEI conjecture. Let $\mathcal{A}$ denote a generic class of Boolean functions and  $\mathcal{A}_n=\mathcal{A}\cap \mathcal{B}_n$. Our goal  is to show  there is a $K>0$ such that ${\bf H}(f) \leq K\,{\bf I}(f)$ for all $f \in \mathcal{A}_n$ and for all $n\geq 1$, by induction on the dimension $n$.  As the base case $n=1$ holds, we are left to argue that if the inductive statement holds for dimension $n-1$ then it holds for dimension $n$.

For $f\in\mathcal{B}_n$, denote ${\bf I}_m(f)$ its influence for the $m^{th}$-variable, defined in terms of Fourier coefficients by ${\bf I}_m(f)= \sum_{S\in [n]: S\ni m} |\widehat{f}(S)|^2.$  It is well-known that 
$${\bf I}(f) = \frac{{\bf I}(f^+) + {\bf I}(f^-)}{2} + {\bf I}_m(f),$$
where $f^{\pm}\in \mathcal{B}_{n-1}$ correspond to splitting $f$ by the variable $x_m$, i.e. 
$$f^{\pm}(x_1,\dots, x_{m-1},x_{m+1},\dots x_{n})=f(x_1,\dots, x_{m-1},\pm1, x_{m+1},\dots, x_n).$$
 The inductive step will hold if we can show that there is $m\in [n]$ such that
\begin{equation}\label{eq:basicInduction}
 {\bf H}(f)-\frac{{\bf H}(f^+) + {\bf H}(f^-)}{2} \leq K\, {\bf I}_m(f).
 \end{equation}
 This leads us to our first ansatz. 
 \begin{ansatz}\label{ansatz1} 
Let $\mathcal{A}$ be a class of Boolean functions. Suppose  there is a constant $K>0$ (independent of the dimension) such that every $f\in\mathcal{A}$ can be split into functions $f^{\pm}\in\mathcal{A}$ (using a variable $m$ that can depend on $f$) for which  inequality~\eqref{eq:basicInduction} holds. Then the FEI conjecture holds in $\mathcal{A}$, i.e. ${\bf H}(f)\leq K\,{\bf I}(f)$ for all $f\in \mathcal{A}$.
\end{ansatz}

It is well known that ${\bf I}_m(f) = \frac14\sum_{S\in [n]\setminus\{m\}} \big (\widehat{f^+}(S) -\widehat{f^-}(S)\big )^2$. We will show that \eqref{eq:basicInduction} is equivalent to 
\begin{equation}\label{eq:basicStep}
\sum_{S\in [n]\setminus\{m\}} \big (\widehat{f^+}(S)^2 +\widehat{f^-}(S)^2\big )\, \psi\left [\frac{ \frac12\big (\widehat{f^+}(S) -\widehat{f^-}(S)\big )^2}{\widehat{f^+}(S)^2 +\widehat{f^-}(S)^2} \right ] \leq K \sum_{S\in [n]\setminus\{m\}} \big (\widehat{f^+}(S) -\widehat{f^-}(S)\big )^2,
\end{equation}
where $\psi(t)=t\log \frac{1}{t}+(1-t)\log\frac{1}{1-t}$ for $t\in (0,1)$ and $\psi(0)=\psi(1)=0$ is the ``binary entropy".

If we could show~\eqref{eq:basicStep}  for all Boolean functions, or for all monotone Boolean functions, we could deduce the FEI conjecture for either class by Ansatz~\ref{ansatz1}. The key here being that both Boolean functions  and monotone Boolean functions are closed under splitting by any  variable.

Very recently, Xiao Han has shown that if one could prove an  inequality similar to~\eqref{eq:basicStep}  for all Boolean functions, then  the FEI conjecture would follow by induction, see  \cite[Remark 3.1, and (3.17)] {Han25}.  The arguments that lead Han to his inequality  are different to the ones presented in this paper.

One can  show that \eqref{eq:basicStep} holds  if 
\begin{equation}\label{eq:basicStronger}
\sum_{S\in [n-1]} \big |\widehat{f^+}(S)^2 -\widehat{f^-}(S)^2\big | \leq K \sum_{S\in [n-1]} \big (\widehat{f^+}(S) -\widehat{f^-}(S)\big )^2.
\end{equation}
This inequality  had already been  identified by Han \cite[Question 3.1]{Han25}.

As it turns out, we can construct examples of  monotone Boolean functions in arbitrarily large  dimension  where splitting by the last variable  will prevent~\eqref{eq:basicStronger} 
from holding for any fixed $K>0$. However, the same functions when split by a well-chosen variable, will satisfy~\eqref{eq:basicStronger}  for a fixed $K$ independent of the dimension, hence they will satisfy~\eqref{eq:basicStep}. These monotone functions $F$ we call $\delta$-tribes, and they have the property that when we split with the wrong variable, $F^+=f$ is a disjoint tribes function and $F^-=f-2\delta_y$ where $y$ is a boundary point of $f$, that is $f(y)=1$ and $f(x)=-1$ for all $x<y$. However, when split with an appropriate choice of variable, $F^+$ is another $\delta$-tribes function and $F^-$ is a disjoint tribes function. This leads us to state our second  ansatz (for the definition of a stopping binary tree, see Section~\ref{sec:binaryDecisionTree}).
\begin{ansatz}\label{ansatz2} 
Let $\mathcal{A}$ be a class of Boolean function such that for all  $f\in \mathcal{A}$  there is a stopping binary decision  tree such that~\eqref{eq:basicInduction}  holds with constant $K>0$ for the splitting variable at the branching nodes, and at stopping nodes the  FEI conjecture holds with constant $K$, then ${\bf H}(f)\leq K\, {\bf I}(f).$
\end{ansatz}
We can replace  inequality~\eqref{eq:basicInduction} by~\eqref{eq:basicStep}  or~\eqref{eq:basicStronger} in both Ansatz~\ref{ansatz1} and~\ref{ansatz2}. 
The rest of the paper consists of  constructing classes of Boolean functions to which we can apply  Ansatz~\ref{ansatz1} or~\ref{ansatz2}. 
In addition to the $\delta$-tribes, we can show that inequality~\eqref{eq:basicStronger} holds  for certain Boolean functions or monotone Boolean functions that have some degree of variable separation. First, when a Boolean function $f$ is \emph{semi-separated}, meaning there is a variable $x_m$ such that when splitting $f$ by $x_m$, the splitting functions $f^{\pm}$ are semi-separated, namely $f^+=pq$ and $f^-=pr$ where the Boolean functions $p$, $q$ and $r$ depend on disjoint sets of variables. Second, when a monotone Boolean function $f$ is \emph{tribe separated by a variable}, meaning  there is a variable $x_m$ such that the tribes containing $x_m$ are disjoint from the tribes not containing $x_m$. We say a Boolean function has the \emph{separation property} or, if monotone, the  \emph{tribes separation property}, if it has a stopping binary decision tree such that the functions at the branching nodes are semi-separated, or, if monotone, they are tribe separated by the splitting variable at the node, and in both cases, at the stopping nodes the functions satisfy the  FEI conjecture.

\begin{theorem} The $\delta$-tribes functions, the monotone Boolean functions with the tribe separation property, and the Boolean functions with the separation property satisfy the FEI conjecture.
\end{theorem}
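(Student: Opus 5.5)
The plan is to reduce all three statements to Ansatz~\ref{ansatz2}. First I would justify the ansatz itself by induction on the depth of the stopping binary tree. At a stopping node, ${\bf H}\le K{\bf I}$ holds by hypothesis. At a branching node with splitting variable $x_m$, suppose the children satisfy ${\bf H}(f^\pm)\le K{\bf I}(f^\pm)$. Adding \eqref{eq:basicInduction} and using the identity ${\bf I}(f)=\tfrac12({\bf I}(f^+)+{\bf I}(f^-))+{\bf I}_m(f)$ then gives ${\bf H}(f)\le K{\bf I}(f)$.

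Next I would prove that \eqref{eq:basicStronger} implies \eqref{eq:basicStep}, and that \eqref{eq:basicStep} implies \eqref{eq:basicInduction}. For the second implication, write $\widehat f(S)=\tfrac12(a_S+b_S)$ and $\widehat f(S\cup\{m\})=\tfrac12(a_S-b_S)$, where $a_S=\widehat{f^+}(S)$ and $b_S=\widehat{f^-}(S)$. A direct computation then expresses ${\bf H}(f)-\tfrac12({\bf H}(f^+)+{\bf H}(f^-))$ as $\tfrac12\sum(a_S^2+b_S^2)\big[\psi(t_S)-\psi(u_S)\big]$ up to normalisations, with $t_S=\tfrac{(a_S-b_S)^2}{2(a_S^2+b_S^2)}$ and $u_S=\tfrac{a_S^2}{a_S^2+b_S^2}$. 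Dropping $-\psi(u_S)\le0$ gives \eqref{eq:basicStep}.

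For the first implication, I would use $\psi(t)\le C\,t\log(e/t)$ for small $t$ together with the Boolean structure, $\sum a_S^2=\sum b_S^2=1$. The key point is that $(a-b)^2\le|a^2-b^2|$ whenever $ab\ge0$, and that the terms with $ab<0$ already satisfy $t_S\ge\tfrac12$. So only $S$ with small $t_S$ matter. There $(a^2+b^2)\psi(t_S)\lesssim (a-b)^2\log\frac{a^2+b^2}{(a-b)^2}$, and I would control the logarithm by comparing with $|a^2-b^2|/(a-b)^2$. I expect this step to cost a constant and possibly a small additional argument. If the logarithmic loss is not absorbable termwise, I would fall back to proving \eqref{eq:basicStep} directly in each application.

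Then I would handle the three classes. For the \emph{separation property}, at a branching node $f^+=pq$ and $f^-=pr$ with disjoint variable sets, so $a_{A\cup B\cup C}=\widehat p(A)\widehat q(B)\delta_{C=\emptyset}$ and similarly for $b$. Thus $\sum|a^2-b^2|\le\sum_{B,C}|\widehat q(B)^2\delta_C-\widehat r(C)^2\delta_B|\le 2$, while $\sum(a-b)^2=2-2\widehat q(\emptyset)\widehat r(\emptyset)=4{\bf I}_m(f)$. Hence \eqref{eq:basicStronger} needs ${\bf I}_m(f)\gtrsim 1$ unless $q$ and $r$ are nearly constant. This is where I expect the \textbf{main obstacle}: the bound must be refined using the $\ell^1$ difference structure. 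I would split $\sum|\widehat q(B)^2-\widehat r(B)^2|$, use $\widehat q(\emptyset)^2+\sum_{B\neq\emptyset}\widehat q(B)^2=1$, and observe that for $B\neq\emptyset$ each nonconstant mass is bounded by $(\widehat q(B)-\widehat r(B))^2$ plus cross terms. This should give $\sum|a^2-b^2|\le C(1-\widehat q(\emptyset)\widehat r(\emptyset))$, since $1-\widehat q(\emptyset)^2\le 2(1-\widehat q(\emptyset)\widehat r(\emptyset))$ when both means lie in $[-1,1]$ (after adjusting signs of $q,r$).

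The \emph{tribes separation property} is the monotone special case. If the tribes containing $x_m$ are disjoint from the rest, then $f=g\vee h$ with $h$ depending on $x_m$. Splitting gives $f^\pm=g\vee h^\pm$, which in $\pm1$ notation is a semi-separated product of the form $pq$, $pr$ (OR corresponds to a product up to sign), so the previous case applies.

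For \emph{$\delta$-tribes} $F$, I would choose the good variable so that $F^+$ is again $\delta$-tribes and $F^-$ is a disjoint tribes function. Iterating builds a stopping tree whose leaves are disjoint tribes, where FEI is known by O'Donnell--Tan~\cite{OT13}. At each branching node I would verify \eqref{eq:basicStronger} by explicit Fourier computation: $F^+-F^-$ is supported on the sub-tribe structure plus the $\pm2\delta_y$ perturbation, whose Fourier coefficients all have modulus $2^{-(n-1)+1}$. Here $\sum|a^2-b^2|$ and $\sum(a-b)^2$ are both comparable to the influence of the distinguished variable.
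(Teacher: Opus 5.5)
The main gap is the $\delta$-tribes case. You guess the right tree: split successively by the variables $y_1,\dots,y_u$ of $T_k$, so every negative branch is the disjoint tribes function $H=\max\{\min T_1,\dots,\min T_{k-1}\}$ and the last positive branch is $G=\max\{x_1,\dots,x_{n-u},x_{n+1}\}$. But inequality \eqref{eq:basicStronger} at the branching nodes is only asserted, and your heuristic for it describes the wrong split. A $\pm2\delta_y$ perturbation whose Fourier coefficients all have the same modulus is what you get by splitting along $x_{n+1}$. That is exactly the split for which Proposition~\ref{cor:CSDSfalse} (via Lemma~\ref{claim4}) shows that no constant works in \eqref{eq:basicStronger}.

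For the split along $y_1=x_{n-u+1}$ one has $F^-=H$ and $F^+=H+2^{1-u}(G-H)\prod_{j=2}^{u}(1+y_j)$. Moreover ${\bf I}_{n-u+1}(F)=2^{-u}\big(1-2^{-(n-u)}-\widehat H(\emptyset)\big)$ can be tiny, so crude bounds like $\sum_S|a_S^2-b_S^2|\le 2$ are useless. The actual argument needs three ingredients:
\begin{itemize}
\item $b_S=0$ whenever $S\not\subset[n-u]$, so there $|a_S^2-b_S^2|=(a_S-b_S)^2$;
\item Lemma~\ref{simple_ineq} handles $S=\emptyset$;
\item for $\emptyset\neq S\subset[n-u]$, the cross term $2^{2-u}\sum_S|\widehat G(S)-\widehat H(S)|\,|\widehat H(S)|$ must be controlled by ${\bf I}_{n-u+1}(F)\ge 2^{-(u+1)}(1-\widehat H(\emptyset))$, using $\sum_{S\neq\emptyset}\widehat H(S)^2\le 2(1-\widehat H(\emptyset))$, $|\widehat G(S)|=2^{-(n-u)}$, and $2^{1-(n-u)}\le 1-\widehat H(\emptyset)$ (from \eqref{eq:DisjointTribesEmptysetCoeff}).
\end{itemize}
None of this appears in your proposal, and it is the whole content of the $\delta$-tribes result.

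Two further steps fail as written.

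First, tribe separation. The pair $(f^+,f^-)=(\max(g,h^+),g)$ is \emph{not} semi-separated by $\pm1$-valued functions, even up to sign. For $g=x_1$, $h^+=x_2$, semi-separation would force $\max(x_1,x_2)=\pm p(x_1)q(x_2)$. This is impossible: $\max(x_1,x_2)$ equals $-1$ at exactly one of the four points, while such a product equals $-1$ at an even number of them. Max becomes a product only for $\beta(f)=\frac12(1-f)$, where $\beta(f^+)=\beta(f^-)\,\beta(h)$ with $\{0,1\}$-valued factors. Your separation computation relies on $\sum_A\widehat p(A)^2=\sum_B\widehat q(B)^2=1$, which fails for such factors. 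So the estimate must be redone for functions with values in $\{-1,0,1\}$, as in Proposition~\ref{prop:control_semi_sep}, using $q^2=|q|$ and the common factor ${\bf E}(p^2)$. It is then transferred back via $\widehat{\beta(h)}(S)=-\frac12\widehat h(S)$ for $S\neq\emptyset$, with $S=\emptyset$ handled by Lemma~\ref{simple_ineq}.

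Second, your expression for $\Delta{\bf H}(f)$ misses the term coming from $\phi(x/2)=\frac12\phi(x)+\frac{x}{2}$. The correct bracket is $\psi(t_S)+1-\psi(u_S)$; for $f=x_m$ your formula gives a negative value while $\Delta{\bf H}(f)=0$. The term $1-\psi(u_S)\ge 0$ cannot be dropped and must be bounded, e.g.\ by $1-\psi(u)\le 4(\frac12-u)^2$, which yields $\frac12(a_S^2+b_S^2)(1-\psi(u_S))\le(a_S-b_S)^2$.

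With these repairs the rest is fine. Your refined semi-separation estimate is essentially Proposition~\ref{prop:control_semi_sep}. Your route from \eqref{eq:basicStronger} to \eqref{eq:basicStep} also works, although the termwise bound $(a_S^2+b_S^2)\psi(t_S)\le|a_S^2-b_S^2|$, from $\psi(x)\le 2\sqrt{x(1-x)}$, makes it immediate.
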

To the best of our knowledge the classes of functions we describe do not fall into any of the previously known categories of  functions that satisfy the FEI conjecture. In that sense we provide further evidence for the validity of the conjecture. 

In Section~\ref{sec:prelim}, we present basic definitions and useful facts about Boolean functions, in particular the Fourier decomposition and canonical examples, including  monotone functions. We also introduce the concepts of Fourier entropy, individual and total influence, as well as binary decision tree representations and stopping binary trees.  We also explore the relation between a binary tree representation of a Boolean function and its total influence. 

In Section~\ref{sec:monotone}, we present some useful results about the structure of monotone functions as tribes, including the well known Sperner's Lemma, and some constructions of monotone functions with properties that will be useful later.  In Appendix~A (Section~\ref{sec:appendix}) we further explore the structure of monotone functions. 

In Section~\ref{sec:InductiveApproachFEI}, we describe the inductive approach to the FEI conjecture, that led us to state  Ansatz~\ref{ansatz1} and ~\ref{ansatz2}. The calculation that proves that~\eqref{eq:basicStep} is equivalent to \eqref{eq:basicInduction}  is in Appendix~B (Section~\ref{sec:algAcrobatics}).

In Section~\ref{sec:(counter)examples}, we show that the wrong choice of splitting variable leads to the failure of the stronger inequality~\eqref{eq:basicStronger}, however when choosing the right variable for the same example, inequality ~\eqref{eq:basicStronger} holds with $K=38$. This leads us to introduce the class of \emph{$\delta$-tribes} that will satisfy the assumptions of  Ansatz~\ref{ansatz2} with disjoint tribes at the stopping nodes. 

In Section~\ref{sec:NewClassesBooleanFunctions}, we introduce the new classes of Boolean functions with  the \emph{semi-separation property}, and  of monotone Boolean functions that satisfy the \emph{tribe separation property}. In both cases, the appropriate separation  property will ensure the assumptions in Ansatz~\ref{ansatz2} are met. 
 We describe specific examples that show these are  not empty classes.

\section{Preliminaries}\label{sec:prelim}

In this section we present basic definitions and useful facts about Boolean functions (Section~\ref{sec:Boolean}), in particular the Fourier decomposition (Section~\ref{sec:Fourier}), and canonical examples, including  monotone functions (Section~\ref{sec:canonical-examples}). We also introduce the concepts of Fourier entropy, individual and total influence (Section~\ref{sec:EntropyInfluence}), as well as binary decision tree representations and stopping binary trees (Section~\ref{sec:binaryTree}). Finally we highlight the relation between binary stopping trees and total influence (Section~\ref{sec:InfluenceTrees}).

\subsection{Hamming cube, Boolean functions, and characters}\label{sec:Boolean}

The $n$-dimensional Hamming  or Boolean cube is   $\{-1,1\}^{n}$. Our main players, the Boolean functions are defined on a Hamming cube and output $\pm1$.
Let  $\mathcal{B}_n$ denote the  Boolean functions $f$ defined on the $n$-dimensional Hamming cube,  i.e. $f:\{-1,1\}^n\to \{-1,1\}$. 

Canonical examples include: parity functions, the dictator function, majority function, max and min functions,\footnote{In the Boolean function literature the min is the OR functions and the max is the AND function.}   and tribes. Most of these are \emph{monotone functions}, an interesting and important class  of Boolean functions,  see Section~\ref{sec:canonical-examples}. The book \emph{``Analysis of Boolean functions"} by Ryan O'Donnell~\cite{O'Do21}, is the standard reference and is a fantastic resource for anyone interested in this area.

Given $S\subset [n]:=\{1,2,\dots ,n\}$ the  \emph{character} or \emph{parity function associated to}  $S$ is given by
 \[\chi_{\emptyset}(x_1,\dots,x_n)=1, \quad\quad \chi_S(x_1,\dots,x_n):=\prod_{i\in S} x_i \;\;\;\mbox{for all}\; S\neq \emptyset.\] 
Note that the character functions are Boolean functions.  

 The delta functions $\delta_a$,  are defined  for $a\in \{-1,1\}^n$ as $\delta_a(x)=1$ if $x=a$ and 0 otherwise.  The collection $\{\delta_a: a\in \{-1,1\}^n\}$ forms a basis for the $2^n$-dimensional vector space $\mathcal{R}_n$ of real-valued functions defined on the $n^{th}$-Hamming cube.  
 It is the analogue to the standard basis in Euclidean space. 
These are not Boolean functions, however it will be useful to keep in mind their expansion in terms of characters.
For $a=(a_1,\dots, a_n)\in \{-1,1\}^n$ one can verify that for all $x=(x_1,\dots,x_n)\in \{-1,1\}^n$
\begin{eqnarray}
\delta_a(x) & = & \left (\frac{1+a_1x_1}{2}\right )\left (\frac{1+a_2x_2}{2}\right )\dots\left (\frac{1+a_nx_n}{2}\right ) 
\, = \, \frac{1}{2^n}\sum_{S\subset [n]} \left ( \prod_{i\in S}a_i \right ) \chi_S(x). \label{eq:expansion-standard-basis-in-parity-functions}
\end{eqnarray}
It follows that the  collection $\{\chi_S: S\subset [n]\}$ forms a  basis  for $\mathcal{R}^n$. Therefore every function $f:\{-1,1\}^n\to \R$ has an expansion in terms of characters.

\subsection{Fourier-Walsh decomposition}\label{sec:Fourier}

The vector space $\mathcal{R}^n$  equipped with the    inner product  defined by 
\[ \langle f,g\rangle := {\bf E}[fg]= \frac{1}{2^n}\, \sum_{x\in \{-1,1\}^n}f(x)g(x),\]
becomes a finite dimensional Hilbert space, denoted $L^2(\{-1,1\}^n)$.
We are using the expectation ${\bf E}$ with uniform probability $1/2^n$ for each element $x\in \{-1,1\}^n$. 
The norm induced by this inner product is given by $\|f\|_2=\sqrt{\langle f,f\rangle}$. Note that all Boolean functions have  $L^2$-norm one. One can verify that the characters are orthonormal with respect to this inner product, hence they form an orthonormal basis in $L^2(\{-1,1\}^n)$.

The Boolean functions $\mathcal{B}_n$ are a subset of $L^2(\{-1,1\}^n)$ (in fact a subset of the $L^2$-unit ball)  but not a subspace. We can still take advantage of the Fourier decomposition in terms of the characters, that is for all $f\in L^2(\{-1,1\}^n)$ (whether $f$ is Boolean or not),
\[ f(x) = \sum_{S\subset [n]} \widehat{f}(S) \chi_S(x), \quad\quad\mbox{where}\;\; \widehat{f}(S)=\langle f,\chi_S\rangle.\]

The Fourier decomposition is unique, that is if $f(x)=\sum_{S\subset[n]} a_S\chi_S$ then $a_S=\widehat{f}(S)$. In particular,  we can read from \eqref{eq:expansion-standard-basis-in-parity-functions} the Fourier coefficients of  $\delta_a$, 
\begin{equation}\label{eq:Fourier-coeff-1_a}
 \widehat{\delta_a}(S)=  \frac{1}{2^n}\prod_{i\in S}a_i =  \frac{1}{2^n}\,\chi_S(a).
 \end{equation}
Parseval's identity holds, that is for all $f\in L^2(\{-1,1\}^n)$ we have that
\[  \|f\|_2^2 = \frac{1}{2^n}\sum_{x\in \{-1,1\}^n} |f(x)|^2= \sum_{S\subset [n]} |\widehat{f}(S)|^2. \] 
We can also write the expectation 
 in terms of the Fourier coefficients for functions $f\in \mathcal{R}_n$:
\[ {\bf E}[f]=\widehat{f}(\emptyset), \quad \quad {\bf E}[f\chi_S]=\widehat{f}(S).\]
The following independence property will be useful.

\begin{remark}\label{remark2}
Given functions $f,g\in L^2(\{-1,1\}^n)$ such that $f$ depends only on a set of variables indexed by $S_f\subset [n]$, and $g$ depends only on a set of variables indexed by $S_g\subset [n]$. If $S_f\cap S_g=\emptyset$ then
\[ \widehat{fg}(S) = \left \{\begin{array}{cc} \widehat{f}(S\cap S_f) \, \widehat{g}(S\cap S_g) & \mbox{if $S \subset S_f\cup S_g$}\\
                                                           0 & \mbox{if $S\setminus(S_f\cup S_g) \neq \emptyset $}\end{array}\right. 
                                                           \]
\end{remark}

\subsection{Canonical examples}\label{sec:canonical-examples}
In this section we present some examples that stem from the mathematics of \emph{social choice}, the point here is how to aggregate opinions of many agents. A Boolean function $f\in\mathcal{B}_n$ can be used as a model for an election among two candidates, the inputs are the votes of the $n$ electors, and the output of the function is the winner.

\begin{example}[Dictator function]
The dictator functions are the characters corresponding to the singletons $\chi_{\{i\}}$. 
 The name should be self-explanatory, the election is decided by the $i^{th}$ voter, ``the dictator".
 For $S\subset [n]$, $\widehat{\chi}_{\{i\}} (S)=1$ for  $S= \{i\}$ and $\widehat{\chi}_{\{i\}} (S)=0$ otherwise.
\end{example}

Among the most familiar voting methods is the majority rules. The majority function outputs what the majority of voters vote for. 
\begin{example}[Majority function]\label{eg:majority}
For $n$ odd, the majority function ${\rm Maj}_n: \{-1,1\}^n\to \{-1,1\}$ is defined by ${\rm Maj}_n(x)=\sgn (x_1+x_2+\dots +x_n)$. 
\end{example}

The Boolean AND and OR functions correspond to voting rules in which a candidate is elected unless all voters are unanimously opposed. 
Denote  $e=(1,1,\dots, 1) \in \{-1,1\}^n$, where the dimension is to be understood by the context.

\begin{example}[AND or $\max$ function]\label{eg:AND}
The function ${\rm AND}_n:\{-1,1\}^n\to\{-1,1\}$ is defined by 
\[{\rm AND}_n (x)= \left \{ \begin{array}{cc} +1 & {x} \neq -e,\\
                                                                             -1 & { x}  = - e.
                                                                          \end{array}\right. \]
Note that ${\rm AND}_n(x)=\max\{x_1,x_2,\dots, x_n\}$.   We will  use   the notation $\max$ instead of ${\rm AND}$. 

Observe that
\begin{equation}\label{eq:AND}
\max\{x_1,x_2,\dots, x_n\}= 1-2\delta_{-e}(x)=1-\frac{(1-x_1)\dots (1-x_n)}{2^{n-1}},
\end{equation}
hence $\widehat{{\rm AND}}_n(\emptyset)=1-1/2^{n-1}$, $\widehat{{\rm AND}}_n(S)=(-1)^{|S|+1}/2^{n-1}$ for all $S\neq \emptyset$.   

\end{example}

In particular, we will repeatedly use  formula~\eqref{eq:AND}  when $n=2$,
\begin{equation}\label{eq:max}
\max\{x_1,x_2\} = 1-\frac12(1-x_1)(1-x_2) =\frac12 \big ( 1+x_1+x_2-x_1x_2\big ).
\end{equation}

       \begin{example}[OR or $\min$  function]\label{eg:OR}
The function ${\rm OR}_n:\{-1,1\}^n\to\{-1,1\}$ is defined by 
\[ {\rm OR}_n ( x)= \left \{ \begin{array}{cc} -1 & {x} \neq e,\\
                                                                            +1 & { x}  = e.
                                                                             \end{array}\right. \]
Note that ${\rm OR}_n(x)=\min\{x_1,x_2,\dots, x_n\}$.  
 We  will use  the notation $\min$ instead of ${\rm OR}$.
 
Observe that
\begin{equation}\label{eq:OR}
\min\{x_1,x_2,\dots,x_n\}= -1+2\delta_e(x) = -1+\frac{(1+x_1)\dots (1+x_n)}{2^{n-1}},
\end{equation}                                                                             
hence $\widehat{{\rm OR}}_n(\emptyset)=-1+1/2^{n-1}$, $\widehat{{\rm OR}}_n(S)=1/2^{n-1}$ for all $S\neq \emptyset$.   
 \end{example}       
                                  
\begin{example}[General Tribes]  A  \emph{general  tribes} function $f\in\mathcal{B}_n$ is defined by
\[ 
F=\max\{g_1,g_2,\dots g_s\},
\]
where  $ g_j (x)= \min\{x_i: i\in \Lambda_j\}$ for a collection of  subsets $\Lambda_j$ of $ [n]$. We will often call the sets $T_j=\{x_i: i\in \Lambda_j\}$ the tribes, and write $g_j=\min T_j$. 
\end{example}

\ A general tribes function where the $\Lambda_j$ are pairwise disjoint, is called a \emph{disjoint tribes}  function. We can use the polynomial expansions for max and min given by~\eqref{eq:AND} and~\eqref{eq:OR} to read the Fourier coefficient of  disjoint tribes functions. In particular, when $S=\emptyset$, we have that:
\begin{equation}\label{eq:DisjointTribesEmptysetCoeff}
\widehat{F}(\emptyset) = 1- \frac{1}{2^{s-1}}\prod_{j=1}^s \left (2-\frac{1}{2^{w_j-1}}\right )=1- 2\prod_{j=1}^s \left (1-\frac{1}{2^{w_j}}\right ) , 
\end{equation}
where $w_j$ is the number of elements in $\Lambda_j$. When $n=sw$, and $n_j=w$ for all $j\in [s]$, we say  $F$ is  a \emph{uniform disjoint tribes function}, denoted ${\rm Tribes}_{s,w} $ ($s$ is the ``size'' or number of tribes, and $w$ is the ``width''). Explicit formulas for all the Fourier coefficients of ${\rm Tribes}_{s,w} $ can be found in  \cite[Proposition 4.14]{O'Do21}.

Finally, we recall the class of monotone Boolean functions.

\begin{definition}\label{def:monotone}
A Boolean function $f\in\mathcal{B}_n$ is  \emph{monotone}  if $f({x}) \leq f({ y})$ whenever $x\leq y$,  defined to mean $x_i\leq y_i$ for all $i\in [n]$. Denote by $\mathcal{M}_n$ the class of monotone functions $f\in\mathcal{B}_n$.
\end{definition}
It is easy to see that the dictator, majority, max, min and general tribes functions are all monotone. In Section~\ref{sec:monotone} we will show that monotone functions coincide with the class of general tribes.

\subsection{Entropy and Influence}\label{sec:EntropyInfluence}

We define the spectral entropy for functions in $L^2(\{-1,1\}^n)$. We first define the total influence, and the individual influence  for $f\in \mathcal{B}_n$ , and deduce a Fourier representation for these quantities that can be used to define total and individual influence for $f\in L^2(\{\{-1,1\}^n)$.
Here $\log x = \log_2 x$.

Given $f\in L^2((\{-1,1\}^n)$ its \emph{spectral entropy}\footnote{This coincides with  the Shannon entropy of the probability  distribution $|\widehat{f}(S)|^2$ defined on $S\subset [n]$.}  ${\bf H}(f)$ is defined to be
\[ {\bf H}(f) = \sum_{S\subset [n]} |\widehat{f}(S)|^2\log \frac{1}{|\widehat{f}(S)|^2}.\]

  Given $f\in\mathcal{B}_n$, the \emph{$i^{th}$-influence of $f$}, denoted  ${\bf I}_i(f)$,  is defined to be the probability that when we change the value of the $i^{th}$-coordinate, we get different outcomes, namely,
  \[ {\rm {\bf I}}_i(f)= {\rm Prob}\left \{f(x^{(i\to 1)})\neq f(x^{(i\to -1)})\right \},\]
  where  we are using the notation $x^{(i\to b)}=(x_1,\dots,x_{i-1},b,x_{i+1},\dots, x_n)$.

   The \emph{total influence} of $f\in \mathcal{B}_n$ is
  $ {\rm {\bf I}}(f)= \sum_{i\in [n] } {\rm {\bf I}}_i(f).$

 The \emph{$i^{th}$ (discrete) derivative of $f\in \mathcal{R}_n$} for $i\in [n]$, is  defined by
\[D_if(x) = \frac{f(x^{(i\to1)})-f(x^{(i\to-1)})}{2}.\]
 Notice that $D_if(x)^2=0$ if $f(x^{(i\to1)})=f(x^{(i\to-1)})$ (in other words, the $i^{th}$ coordinate {\sc is not} pivotal for $x$), and $D_if(x)^2=1$  if $f(x^{(i\to1)})\neq f(x^{(i\to-1)})$ (in other words, the $i^{th}$ coordinate {\sc is} pivotal for $x$). Therefore 
 the individual influence ${\rm {\bf I}}_i(f)$ is the square of the $L^2$-norm of $D_if$,
\begin{equation}\label{eq:derivative-influence}
 {\bf I}_i(f)= {\bf E} [ (D_if)^2]= \|D_if\|_2^2.
 \end{equation}

The discrete derivative operator acts like formal differentiation on parity functions: $D_i\chi_S = \chi_{S\setminus\{i\}}$, and hence on Fourier expansions by linearity. More precisely, if $f:\{-1,1\}^n\to \R$ and $f(x)=\sum_{S\subset [n]}\widehat{f}(S) \chi_S(x)$, then
\begin{equation}\label{eq:derivativeFourier}
 D_if(x)=\sum_{S\subset [n]:S\ni i}\widehat{f}(S) \chi_{S\setminus \{i\}}(x),
 \end{equation}
and we can compute its $L^2$-norm using Parseval. We  conclude that 
\begin{equation}\label{eq:indInfluenceForuier}
{\rm {\bf I}}_i(f)= \sum_{S\subset [n]:S\ni i}|\widehat{f}(S)|^2,  
\end{equation}
 and  therefore,   the total influence  of $f$   can be given in terms of Fourier coefficients as follows:
\begin{equation}\label{eq:InfluenceFourier}
 {\rm {\bf I}}(f)=\sum_{S\subset [n]} |S| |\widehat{f}(S)|^2.
 \end{equation}
 We can now use~\eqref{eq:derivative-influence} or~\eqref{eq:indInfluenceForuier} to define individual influence, and~\eqref{eq:InfluenceFourier}  to define total influence for any function  $f:\{-1,1\}^n\to \R$, not just Boolean functions.
When $\|f\|_2=1$, we can think of $\{ |\widehat{f}(S)|^2: S\subset [n]\}$ as a probability measure in $\mathcal{P}([n])=\{S: S\subset [n]\}$, then ${\rm {\bf I}}(f)$, the total influence of $f$, measures the expected size of a subset $S$ of $[n]$.

In the case of monotone Boolean functions, the relation between the total influence and the Fourier coefficients is greatly simplified, see~\cite[Proposition 2.21]{O'Do21}.

\begin{proposition}\label{prop:influenceMonotone}
Let  $f\in \mathcal{M}_n$  then  ${\bf I}_i(f)=\widehat{f}(\{i\})$ and  ${\bf I}(f) = \sum_{i\in [n]} \widehat{f}(\{i\})$.

\end{proposition}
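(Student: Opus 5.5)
The plan is to work from the pivotality definition of ${\bf I}_i(f)$ and the Fourier formula $\widehat{f}(\{i\})={\bf E}[f\chi_{\{i\}}]={\bf E}[f(x)x_i]$. Both reduce to the same sum over pairs of points that differ only in coordinate $i$, and monotonicity lets us remove a sign.

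\emph{Step 1: rewrite $\widehat{f}(\{i\})$ as an average of $D_if$.} Pair each $x$ with its neighbour in coordinate $i$. This gives
\[
{\bf E}[f(x)x_i]={\bf E}\left[\frac{f(x^{(i\to1)})-f(x^{(i\to-1)})}{2}\right]={\bf E}[D_if].
\]
Alternatively, read it off \eqref{eq:derivativeFourier}: the constant term of $D_if$ is $\widehat{f}(\{i\})$, so ${\bf E}[D_if]=\widehat{f}(\{i\})$.

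\emph{Step 2: use monotonicity.} We have $x^{(i\to-1)}\le x^{(i\to1)}$, and $f$ is monotone, so $f(x^{(i\to1)})\ge f(x^{(i\to-1)})$. Since $f$ is $\pm1$-valued, $D_if(x)\in\{0,1\}$ for every $x$. Hence $D_if=(D_if)^2$ pointwise.

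\emph{Step 3: conclude.} Combining Step 1, Step 2 and \eqref{eq:derivative-influence},
\[
\widehat{f}(\{i\})={\bf E}[D_if]={\bf E}[(D_if)^2]={\bf I}_i(f).
\]
Summing over $i\in[n]$ and using the definition ${\bf I}(f)=\sum_i{\bf I}_i(f)$ gives the total influence formula.

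No step is a real obstacle. The only point needing care is Step 2, where one must observe that monotonicity rules out $D_if=-1$. That is exactly what lets us replace the square by the function itself.
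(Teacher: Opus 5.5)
Your proposal is correct and follows essentially the same route as the paper: monotonicity gives $D_if\in\{0,1\}$, so $(D_if)^2=D_if$, and then ${\bf I}_i(f)={\bf E}[D_if]=\widehat{f}(\{i\})$ by reading off the constant term in \eqref{eq:derivativeFourier}. Your alternative pairing computation of ${\bf E}[f(x)x_i]$ is also valid, but it is just an elementary verification of the same identity.
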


\begin{proof}
Since $x^{(i\to -1)}\leq x^{(i\to1)}$ and $f$ is monotone, then $D_if(x)$ takes values in $\{0,1\}$, hence $|D_if|^2=D_if$. Thus 
$
{\bf I}_i(f) = {\bf E} [ D_if]= \widehat{D_if}(\emptyset)=\widehat{f}(\{i\}).
$
The last inequality holds by \eqref{eq:derivativeFourier}, since $S\setminus\{i\}=\emptyset$ precisely when $S=\{i\}$.
Consequently, by definition ${\bf I}(f) = \sum_{i\in [n]} \widehat{f}(\{i\})$.
\end{proof}

\subsection{Binary trees and stopping  trees}\label{sec:binaryTree}

The notion of a binary decision tree representation for Boolean functions is very useful.

\subsubsection{Splitting by a variable}

Any $f\in \mathcal{B}_n$ can be split in two halves, 
$f^{\pm}\in\mathcal{B}_{n-1}$, by a variable $x_m$ with $m\in [n]$, by fixing $x_m=1$ or $x_m=-1$, as follows:  
\[f^{\pm}(x^m) = f(x^{(m\to\pm 1)}), \quad\mbox{where} \; x^m=(x_1,\dots,  x_{m-1}, x_{m+1},\dots, x_n)\in \{-1,1\}^{n-1}.\] 
We can recover $f$ from the splitting functions $f^{\pm}$ and the splitting variable $x_m$ as follows:
\begin{equation}\label{eq:fsplitxm}
f(x) = \frac{f^+(x^m) + f^-(x^m)}{2} + x_m\frac{f^+(x^m) - f^-(x^m)}{2}.
\end{equation}
  We will write $f = \frac{f^++f^-}{2} + x_m\frac{f^+-f^-}{2}$, where it is understood that the splitting has been done by the variable $x_m$.  In particular, 
 the functions $f^{\pm}$ do not depend on $x_m$,  therefore
 $D_mf =\frac{f^+ - f^-}{2}$,   and by \eqref{eq:derivative-influence} and Parseval we conclude that
  \begin{equation}\label{eq:influence-fpm}
   {\bf I}_m(f)= \frac14{\bf E}[|f^+-f^-|^2] = \frac14\sum_{S\subset [n]\setminus\{m\}} \big (\widehat{f^+}(S)-\widehat{f^-}(S)\big )^2
  \end{equation}
  We also deduce from~\eqref{eq:fsplitxm} that if $S\subset [n]$ and $m\notin S$ then
 \begin{equation}\label{eq:fhat-fpmhat}
  \widehat{f}(S) =\frac{\widehat{f^+}(S) + \widehat{f^-}(S)}{2}, \quad \mbox{and} \quad  \widehat{f}(S\cup \{m\}) =\frac{\widehat{f^+}(S) - \widehat{f^-}(S)}{2}.
  \end{equation}
 
 When the splitting variable is not specified, it will be understood to be  the last variable, that is $m=n$ when $f\in\mathcal{B}_n$. 
  If we iterate the splitting, we create a decision tree associated to the Boolean function $f\in \mathcal{B}_n$. 
  
  \subsubsection{Binary decision trees}\label{sec:binaryDecisionTree}
  
 A \emph{binary decision tree associated to $f\in \mathcal{B}_n$}  is a rooted binary tree  that has $f$ as the root at level 0 and the internal nodes  at level $k\in [n-1]$ are labeled by $\rho\in \sigma_k$, where $\sigma_k$ denotes  the set of sequences of length $k$ of  $\pm$ signs. We will agree that $\sigma_0=\emptyset$. If $\rho\in \sigma_k$ then $\rho\pm \in \sigma_{k+1}$ by concatenating  $\pm$ to the right of the  sequence $\rho$. For example, when $k=2$ then $\sigma_2=\{ ++, +-, -+, --\}$, and if $\rho=+-$ then $\rho+=+-+$. Note that $\rho$ encodes the path from the root to the node. At node $\rho$ we have a branching function $f^\rho$ and a
  label  $i_\rho\in [n]$ (corresponding to  the splitting variable at the node). At level 0, we have just one node $\rho=\emptyset$, the branching function is the root $f^{\emptyset}=f$ and the label  of the splitting variable is $i_{\emptyset}\in [n]$. Each variable $i\in [n]$ appears at most once along any path from the root  to a node.  At node $\rho$,  the left branch  corresponds to assigning $x_{i_\rho}=-1$ and the right branch to assigning $x_{i_\rho}=+1$, that is  $f^{\rho\pm}=(f^\rho)^{\pm}$. 
  For example,  the functions $f^{\pm}$ dictated by the chosen splitting variable at node $\rho=\emptyset$  will appear in  the first branching level, and we will continue branching for  at most $n$ levels, as once a splitting variable has been chosen it disappears from any path that goes through that node.
 When a branch is a constant  function, $\pm 1$, we have reached a leaf, no further splitting is necessary as the value along that path is now determined. If this happens before level $n$,  say at level $k$, this indicates that the Boolean function is constant on a subcube of dimension $n-k$.
 
 We will  consider \emph{stopping binary decision trees} where we stop the branching before reaching the leaves. At the stopping nodes  we record the stopping functions $f^\rho$,  at the branching nodes we record the branching functions $f^\rho$ and the splitting variables $x_{i_\rho}$. When depicting binary trees, it often suffices to record at level 0 the root and the splitting variable label $i_{\emptyset}$, at the stopping nodes the functions $f^\rho$ and at the branching nodes the splitting variable label $i_\rho$, see Figure~\ref{fig:trees}.

 Given the root and the splitting variables at every node, we can find the function $f^\rho$  at  each node $\rho$,  by definition, starting from the root and proceeding down systematically.  Using repeatedly~\eqref{eq:fsplitxm}, we can find the root given the functions at the stopping nodes and the splitting variables at the branching nodes. For example, for the  tree  on the right in Figure~\ref{fig:trees},
 \[ f=\frac12\Big [f^{-}+ \Big (\frac12(f^{+-}+f^{++}) +\frac{x_4}{2}(f^{++}-f^{+-})\Big )\Big ]  + \frac{x_1}{2}\Big [ \Big (\frac12(f^{+-}+f^{++}) +\frac{x_4}{2}(f^{++}-f^{+-})\Big ) - f^{-}\Big ] .\] 

\begin{figure}\label{fig1}
 
\begin{tikzpicture}
\Tree
[.$f,x_3$
    [.$f^-,x_1$
    \edge[];{$f^{--}$}
     \edge[];{$f^{-+}$}
     ]
    [.$f^+,x_5$
    \edge[];{$f^{+-}$}
     \edge[];{$f^{++}$}
     ]
  ]
\end{tikzpicture}
\hskip .5in 
\begin{tikzpicture}
\Tree
[.$f,3$
    [.$1$
    \edge[];{$f^{--}$}
     \edge[];{$f^{-+}$}
     ]
    [.$5$
    \edge[];{$f^{+-}$}
     \edge[];{$f^{++}$}
     ]
  ]
\end{tikzpicture}
\hskip .5in 
\begin{tikzpicture}
\Tree
[.$f,1$
    \edge[];{$f^-$}
    [.$4$
    \edge[];{$f^{+-}$}
     \edge[];{$f^{++}$}
     ]
  ]
\end{tikzpicture}

\caption{\footnotesize{The left and middle figures depict  the same  stopping binary tree. 
 The right figure  depicts  a stopping  binary tree for the same root function with stopping nodes at different levels, note that
  the stopping functions at level 2  are  different  from those  in the first two trees. } }\label{fig:trees}
 \end{figure}

 We can associate to $f\in\mathcal{B}_n$ an enormous number of binary trees and identifying  those trees with a simpler structure is a fruitful endeavor.  The following definitions of depth-$d$ decision trees and read-once trees  come from~\cite{OWZ11}, and read-$k$ decision trees from~\cite{WWW14}.
  
We say that $f\in \mathcal{B}_n$ is \emph{computable as a depth-$0$ decision tree} if it is constantly $-1$ or $1$. We
inductively say that $f$ is computable as a \emph{depth-$d$ decision tree} if there is a coordinate $i\in [n]$ such
that the functions $f^{\pm}$ obtained when splitting $f$ by the variable $x_i$ are  computable by depth-$(d-1)$ decision trees.
 We further say that the decision-tree computation is \emph{read-once} if $f^{\pm}$ depend on disjoint sets of coordinates and are themselves
inductively read-once. In our language it means each splitting variable is queried no more than once  in the entire tree. \emph{Read-$k$ decision trees} is 
 the class of decision trees in which each variable is queried at no more than $k$ distinct locations
in the entire tree.

 \subsection{Influence and binary trees} \label{sec:InfluenceTrees} 
 
 Given a Boolean function $f$ and a binary tree with root $f$, we can write ${\bf I}(f)$, the influence of  $f$ as a weighted sum of  ${\bf I}_{i_\rho}(f^\rho)$, the individual influences of the splitting functions on a binary tree at each node.
 The key is the  following lemma, see~\cite[Proposition 2.4]{WWW14} and some of its consequences.

  \begin{lemma}\label{lem:I_k}
Let $f\in\mathcal{B}_n$ and $m\in [n]$.  If we split $f$ by the variable $x_m$, then for all $k\in [n]\setminus\{m\}$, 
$$ {\bf I}_k(f) = \frac12 \big ({\bf I}_k(f^+) +{\bf I}_k(f^-)\big ).$$ 
\end{lemma}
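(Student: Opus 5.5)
The plan is to work directly in Fourier language, using the formula \eqref{eq:indInfluenceForuier} for individual influence, which holds for every real-valued function on the cube (in particular for $f^{\pm}\in\mathcal{B}_{n-1}$), together with the relation \eqref{eq:fhat-fpmhat} between the Fourier coefficients of $f$ and those of the splitting functions $f^{\pm}$.

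Fix $k\in[n]\setminus\{m\}$. First, the sets $S\subset[n]$ with $S\ni k$ are partitioned according to whether $m\in S$. Each such set can be written uniquely as either $T$ or $T\cup\{m\}$ with $T\subset[n]\setminus\{m\}$ and $T\ni k$. By \eqref{eq:indInfluenceForuier},
\[
{\bf I}_k(f)=\sum_{T\subset[n]\setminus\{m\},\,T\ni k}\Big(\widehat f(T)^2+\widehat f(T\cup\{m\})^2\Big).
\]
Next, substitute \eqref{eq:fhat-fpmhat}. Each summand becomes
\[
\tfrac14\big(\widehat{f^+}(T)+\widehat{f^-}(T)\big)^2+\tfrac14\big(\widehat{f^+}(T)-\widehat{f^-}(T)\big)^2 .
\]
By the parallelogram identity this equals $\tfrac12\big(\widehat{f^+}(T)^2+\widehat{f^-}(T)^2\big)$.

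Finally, regard $f^{\pm}$ as functions on the cube indexed by $[n]\setminus\{m\}$. Applying \eqref{eq:indInfluenceForuier} to them gives ${\bf I}_k(f^\pm)=\sum_{T\ni k}\widehat{f^\pm}(T)^2$, where $T$ ranges over subsets of $[n]\setminus\{m\}$. Summing the previous display over $T$ therefore yields ${\bf I}_k(f)=\tfrac12\big({\bf I}_k(f^+)+{\bf I}_k(f^-)\big)$.

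There is no real obstacle here. The only point needing care is the bookkeeping of the index set: the coordinate $k$ of $f^{\pm}$ is the coordinate $k$ of $f$ after $x_m$ is removed, so the relabeling is consistent.

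A combinatorial alternative is also available. Since $D_kf(x)$ depends on $x_m$ only through which of $f^{\pm}$ is evaluated, averaging $(D_kf)^2$ over $x_m=\pm1$ gives $\tfrac12\big({\bf E}[(D_kf^+)^2]+{\bf E}[(D_kf^-)^2]\big)$. Combined with \eqref{eq:derivative-influence}, this gives the same identity.
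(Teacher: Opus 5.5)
Your proof is correct: since $k\neq m$, the sets $S\ni k$ pair off as $T$ and $T\cup\{m\}$, and the identity $\widehat f(T)^2+\widehat f(T\cup\{m\})^2=\frac12\big(\widehat{f^+}(T)^2+\widehat{f^-}(T)^2\big)$ gives the claim after summing over $T$. The paper gives no proof of its own and only cites \cite[Proposition 2.4]{WWW14}; your Fourier computation uses exactly the coefficient identity the paper records in Appendix B (where it points to \cite[Proposition 2.3]{WWW14}), and your combinatorial alternative via $D_kf(x)=D_kf^{\pm}(x^m)$ when $x_m=\pm1$ is also valid.
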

 
Using the definition of influence and Lemma~\ref{lem:I_k} we  get 
\begin{equation}\label{lem:induction-influence2}
{\bf I}(f)=\frac12\big [{\bf I}(f^+) + {\bf I}(f^-)\big ] + {\bf I}_m(f).
\end{equation}

We can iteratively use~\eqref{lem:induction-influence2} at each node of  any binary tree  associated to $f$, say with functions $f^\rho$ and nodes $i_\rho$  to obtain
\begin{equation}\label{eq:non-uniformInfluence}
{\bf I}(f) =  \sum_{k=0}^{n-1} \frac{1}{2^k} \sum_{\rho \in\sigma_k} {\bf I}_{i_\rho} (f^\rho),
\end{equation}
where we understand $\sigma_0=\emptyset$ and $i_{\emptyset}$ to be the node at root $f^{\emptyset}=f$. 

We could have stopped branching at different levels to obtain a \emph{stopping tree} associated to $f$, and there will be an appropriate sum of multiples of the total influence of the  functions at the stopping nodes, and a sum accounting for all the branching nodes  involving individual influences of the branching  functions  corresponding to the splitting variable at those nodes.
Let us introduce some notation to be more precise.

Given a Boolean function $f$ and a stopping tree $\T$. We have two flavors of nodes, the \emph{stopping nodes} denoted $\sigma_{\T}$ and \emph{branching nodes}, denoted $\beta_{\T}$. Given a node $\rho$ in $\T$, it has associated a level $k_\rho$ (the number of $\pm$ signs in the sequence),  and the branching nodes also have a splitting label $i_\rho$ (indicating  we are splitting by the variable $x_{i_\rho}$). 
Then the formula we will get for the influence of the Boolean function $f$ by repeatedly using~\eqref{lem:induction-influence2}, is
\begin{equation}\label{eq:stoppingTreeInfluenceFourier}
{\bf I}(f) = \sum_{\rho \in \sigma_{\T}} \frac{1}{2^{k_\rho}}{\bf I}(f^\rho) +  \sum_{\rho\in\beta_{\T}}  \frac{1}{2^{k_\rho}} {\bf I}_{i_\rho}(f^\rho).\end{equation}

\section{Monotone Boolean functions}\label{sec:monotone}
 
 We collect some lemmas about monotone Boolean functions that will be used in constructing our classes of functions that satisfy the FEI conjecture. This includes the well known Sperner's Lemma (Section~\ref{sec:SpernersLemma}) identifying monotone Boolean functions and general tribes,  as well as other results including (i) two characterizations of monotone Boolean  functions $f$ in terms of  $f^{\pm}$, (ii) the existence of  monotone Boolean functions with Fourier transform with arbitrarily  large $L^1$-norm,  and  (iii) a mechanism to generate non-constant monotone Boolean functions $f$  given $f^+$ where $f^-$ is a minimal perturbation of $f^+$  (Section~\ref{sec:structureMonotone1}). We believe most of these results must be  folklore in the field. 
 In Appendix A (Section~\ref{sec:appendix}) we state and prove some further structural results for monotone Boolean  functions.

 \subsection{Sperner's Lemma}\label{sec:SpernersLemma}
 In this section we will show that  non-constant, monotone Boolean functions are exactly the general tribes.   We will write $\mathcal{M}^*_n$ for the class of non-constant monotone functions in $\mathcal{B}_n$.

\begin{definition}[Sperner Family] A \emph{Sperner family} for $[n]$ is a finite collection of non-empty subsets,  $\mathcal{S}= \{\Lambda \subset [n]\}$, such that no $\Lambda\in \mathcal{S}$ is a proper subset of  any other $\widetilde{\Lambda}\in \mathcal{S}$.
\end{definition}

For a general  tribes function,  if a label set contains another label set, then the larger label set  is redundant, simply because if $\Lambda \subset \widetilde{\Lambda} \subset [n]$ then $ \min\{x_j: j\in \widetilde{\Lambda}\} \leq \min\{x_j: j\in \Lambda\}$, and therefore
\[
\max\{\min\{x_j: j\in \widetilde{\Lambda}\},\min\{x_j: j\in \Lambda\}\}= \min\{x_j: j\in \Lambda\}.
\]
It follows that every general tribes function has a minimal representation, one where the label sets form a Sperner family. Sperner's Lemma will provide such a minimal  tribes representation for non-constant monotone Boolean functions.

\begin{lemma}[Sperner's Lemma]\label{lem:Sperner's} 
Any  $f\in \mathcal{M}^*_n$ can be represented as a minimal tribes function.
\end{lemma}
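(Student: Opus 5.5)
The plan is to read the tribes directly off the set where $f$ equals $+1$. Let $f\in\mathcal{M}^*_n$, with the paper's conventions: the max is the outer operation, $\min\{x_i : i\in\Lambda\}=1$ exactly when $x_i=1$ for all $i\in\Lambda$, and $\max$ of the tribes is $1$ exactly when some tribe equals $1$.

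For $x\in\{-1,1\}^n$ write $P(x)=\{i: x_i=1\}$, and set
\[
\mathcal{U}=\{P(x): f(x)=1\}.
\]
Let $\mathcal{S}$ be the collection of inclusion-minimal elements of $\mathcal{U}$. We then check the following facts in order.

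\begin{itemize}
\item $\mathcal{U}$ is nonempty, because $f$ is nonconstant.
\item $\mathcal{U}$ is an up-set: if $P(x)\subset P(y)$ then $x\le y$, so $f(y)\ge f(x)=1$ by monotonicity (Definition~\ref{def:monotone}).
\item $\emptyset\notin\mathcal{U}$. Otherwise $f(-e)=1$, and monotonicity gives $f\equiv 1$, contradicting nonconstancy. Hence every $\Lambda\in\mathcal{S}$ is nonempty.
\item $\mathcal{S}$ is finite and, by minimality, no member of $\mathcal{S}$ is a proper subset of another. So $\mathcal{S}$ is a Sperner family.
\end{itemize}

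Next we show $f=\max_{\Lambda\in\mathcal{S}}\min\{x_i: i\in\Lambda\}$, by checking that both sides take the value $1$ on the same points.

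\begin{itemize}
\item If $f(x)=1$, then $P(x)\in\mathcal{U}$. Since $[n]$ is finite, $P(x)$ contains some minimal $\Lambda\in\mathcal{S}$. Then $x_i=1$ for all $i\in\Lambda$, so that tribe equals $1$ and the right-hand side is $1$.
\item Conversely, suppose some tribe $\Lambda\in\mathcal{S}$ has $\min_{i\in\Lambda}x_i=1$. Then $\Lambda\subset P(x)$, and because $\mathcal{U}$ is an up-set, $P(x)\in\mathcal{U}$. This means $f(x)=1$.
\end{itemize}

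Both sides take values in $\{-1,1\}$, so they agree everywhere. This gives the minimal tribes representation.

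No step is a real obstacle. The only care needed is the sign bookkeeping in the paper's max/min conventions, and the need for nonconstancy: it rules out $\mathcal{S}=\emptyset$ (when $f\equiv-1$) and the empty tribe (when $f\equiv 1$).
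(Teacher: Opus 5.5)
Your proof is correct and follows the paper's argument essentially verbatim: you take the inclusion-minimal sets $\Lambda$ with $f(x_\Lambda)=1$ as the Sperner family, then check both directions of the equivalence between $f(y)=1$ and the tribes function taking the value $1$ at $y$, using monotonicity. Your bookkeeping is slightly cleaner than the paper's, since you correctly attribute nonemptiness of $\mathcal{S}$ to $f\not\equiv -1$ and nonemptiness of each tribe to $f\not\equiv 1$ (that is, to $f(-e)=-1$).
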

\begin{proof}
Identify each set $\Lambda\subset [n]$ with an element of $x_{\Lambda}\in \{-1,1\}^n$ a string of $\pm 1$, as follows: if $i\in \Lambda$ then $x_i=1$ and if $i\notin \Lambda$ then $x_i=-1$. This is a one-to-one correspondence. For example, given $\Lambda=\{2,3,5\}\subset [5]$ then $x_{\Lambda}=(-1,1,1,-1,1)$, and given $x = (1,-1,1,-1,-1)$ then $\Lambda_x=\{1,3\}\subset [5]$. Note that  $\Lambda \subset \widetilde{\Lambda}$ if and only if  $x_{\Lambda}\leq x_{\widetilde{\Lambda}}$.

Note that since $f$ is non-constant and monotone, it must be that $f(e)=f(x_{[n]}) =1$ and $f(-e) = f(x_{\emptyset})=-1$.
Choose the collection of minimal sets $\Lambda$ for which $f(x_{\Lambda}) =1$, meaning that if $\Lambda'\subsetneq \Lambda$ then $f(x_{\Lambda'})=-1$. Such minimal sets exist because the function is monotone, and it is a non-empty collection because $f(x_{\emptyset})=-1$. These  minimal sets give the Sperner family $\mathcal{S}$ we are looking for. 
It remains to show that the monotone function $f$ coincides with the general tribes function determined by the Sperner family $\mathcal{S}$, namely $g=\max\{g_{\Lambda}: \Lambda\in{\mathcal S}\}$.

We want to show that $f(y)=g(y)$ for all $y\in \{-1,1\}^n$. It suffices to show that $f(y)=1$ if and only if $g(y)=1$. First, suppose $f(y)=1$. Then the set $\Lambda_y=\{i\in [n]: y_i=1\}$ must contain a minimal set $\Lambda\in \mathcal{S}$ and its corresponding point $x_{\Lambda}$ by definition has the property that $f(x_{\Lambda})=1$. Now the point $x_{\Lambda}$ has $i^{th}$-coordinate equal to 1 precisely when $i\in \Lambda$, and by construction $\Lambda\subset \Lambda_y$, therefore $x_{\Lambda}\leq y$ which means that $y_i=1$ for all $i\in \Lambda$. This implies that  $g_{\Lambda}(y)=1$ and thus $g(y)=1$. Second, suppose now that $g(y)=1$. Then there is $\Lambda\in \mathcal{S}$ such that $g_{\Lambda}(y)=1$, which implies that $y_i=1$ for all $i\in \Lambda$, and therefore $x_{\Lambda}\leq y$. But by construction $f(x_{\Lambda})=1$ for all $\Lambda\in \mathcal{S}$, and by monotonicity   $f(y)=1$.
\end{proof}

For example, consider $f\in \mathcal{M}^*_3$,  given by 

\begin{center}
\begin{tabular}{c|r|c|c}\hline
$x=(x_1,x_2,x_3)$ & $f(x)$ & $\Lambda_x$ & in $\mathcal{S}$ \\ \hline
$(+1,+1,+1)$ & 1 & \{1,2,3\} & no \\
$(+1,+1,-1)$ & 1 &  {\bf \{1,2\} }& {\bf yes} \\
$(+1,-1,+1)$ & -1 & \{1,3\} & no \\
$(+1,-1,-1)$ & -1 & \{1\} & no \\
$(-1,+1,+1)$ & 1 & {\bf \{2,3\}} & {\bf yes} \\
$(-1,+1,-1)$ & -1 & \{2\} & no \\
$(-1,-1,+1)$ & -1 & \{3\} & no \\
$(-1,-1,-1)$ & -1 & $\emptyset $& no \\ \hline
\end{tabular}
\end{center} 
\vskip .08in
Then it  can be verified that $f(x) = \max \big \{ \min\{x_1,x_2\}, \min\{x_2,x_3\}\big \}$.

Let $\mathcal{S}$ be a Sperner family associated to   $f\in\mathcal{M}^*_n$, and for $\Lambda\in \mathcal{S}$ let $y_{\Lambda}\in \{-1,1\}^n$ be given by the bijection described in the proof of Sperner's Lemma. These points  have  the property that $f(x) =1$ for all $x\geq y_{\Lambda}$ and $f(x)=-1$ for all $x<y_{\Lambda}$, and they describe the \emph{boundary} of  the monotone non-constant function $f$. For the constant function $f\equiv 1$ the boundary is $\{-e\}$, and for the constant function $f\equiv -1$ the boundary is  $\{e\}$.

\subsection{Structure of Monotone Functions}\label{sec:structureMonotone1}

In this section we collect some useful facts about monotone functions.

\begin{proposition}\label{prop:monotonePaul} A function $f\in \mathcal{M}_{n+1}$  if and only if the functions $f^{\pm}\in \mathcal{M}_n$  and $f^-\leq f^+$.
\end{proposition}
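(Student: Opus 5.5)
The plan is to argue directly from the definition of monotonicity together with the splitting of a point $x\in\{-1,1\}^{n+1}$ along the splitting variable. Here the splitting variable is the last one, $x_{n+1}$, per the convention stated in Section~\ref{sec:binaryTree}; the same argument works for any $m$. We write $x=(x',x_{n+1})$ with $x'\in\{-1,1\}^n$, so that $f(x',1)=f^+(x')$ and $f(x',-1)=f^-(x')$. Both directions are then elementary comparisons of values of $f$ at comparable points.

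For the forward direction, assume $f\in\mathcal{M}_{n+1}$.
\begin{itemize}
\item If $x'\le y'$ in $\{-1,1\}^n$, then $(x',\pm1)\le(y',\pm1)$. Monotonicity of $f$ gives $f^{\pm}(x')\le f^{\pm}(y')$, so $f^\pm\in\mathcal{M}_n$. They are Boolean since $f$ is.
\item Since $(x',-1)\le(x',1)$, we get $f^-(x')=f(x',-1)\le f(x',1)=f^+(x')$ for every $x'$, which is $f^-\le f^+$.
\end{itemize}

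For the converse, assume $f^\pm\in\mathcal{M}_n$ and $f^-\le f^+$. Take $x=(x',a)\le y=(y',b)$ in $\{-1,1\}^{n+1}$, so $x'\le y'$ and $a\le b$. There are two cases.
\begin{itemize}
\item If $a=b$, then $f(x)=f^{a}(x')\le f^{a}(y')=f(y)$ by monotonicity of $f^a$.
\item If $a=-1$ and $b=1$, then $f(x)=f^-(x')\le f^-(y')\le f^+(y')=f(y)$, using first monotonicity of $f^-$ and then the pointwise inequality $f^-\le f^+$.
\end{itemize}
The case $a=1$, $b=-1$ cannot occur. Hence $f\in\mathcal{M}_{n+1}$. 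Note that $f$ is Boolean, and in fact uniquely determined by $f^\pm$ via~\eqref{eq:fsplitxm}.

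There is no real obstacle here. The only point needing care is the mixed case $a<b$ in the converse. It is precisely where the extra hypothesis $f^-\le f^+$ is needed: we chain monotonicity in the first $n$ coordinates with the comparison across the last coordinate, and we pass through the intermediate point $(y',-1)$ to do so.
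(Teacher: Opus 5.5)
Your proof is correct and follows the paper's argument essentially line for line. Both directions compare values of $f$ at comparable points. The only nontrivial case, $(x',-1)\le(y',1)$, is handled exactly as in the paper, by the chain $f^-(x')\le f^-(y')\le f^+(y')$.
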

\begin{proof}  Without loss of generality we will assume we are splitting $f$ according to $x_{n+1}$.

$(\Rightarrow$) If $f\in \mathcal{M}_{n+1}$, then for $x\in \{-1,1\}^n$, we have $f^+(x)=f(x,1)$ and $f^-(x)=f(x,-1)$. If $x,y\in \{-1,1\}^n$ and $x_i\leq y_i$ for all $i\in [n]$
then $f^{\pm}(x)=f(x,\pm 1) \leq f(y,\pm 1)=f^{\pm}(y)$, as in both cases $x_{n+1}=y_{n+1}$, therefore both $f^{\pm}$ are monotone. Furthermore, monotonicity of $f$ implies that  $f(x,-1)\leq f(x,1)$ for all $x\in \{-1,1\}^n$, therefore $f^-\leq f^+$.

($\Leftarrow$) Assume now that $f^{\pm}\in \mathcal{M}_n$ and $f^-\leq f^+$, then $f(x,\pm1) = f^{\pm}(x)\leq  f^{\pm}(y) = f(y,\pm1 ) $ whenever $x_i\leq y_i$ for all $i\in [n]$. The one case that is not covered is when  we consider the points $(x,-1)$ and $(y,1)$ in $\{-1,1\}^{n+1}$, with $x_i\leq y_i$ for all $i\in [n]$. 
Using first the monotonicity of  $f^-$, second that $f^-\leq f^+$, we conclude that for all $x,y\in \{-1,1\}^n$ with $x_i\leq y_i$ for all $i\in [n]$
\[ f(x,-1)=f^-(x) \leq f^-(y)\leq f^+(y)=f(y,1).\]
\end{proof}
 
  Let us flesh out what the condition $g\leq f$ implies in terms of the tribes composing the monotone functions $f$ and $g$. Given $T\subset [n]$, then   $\min T:= \min\{x_m:m\in T\}$.
  \begin{lemma}\label{lem:monotonef<g}
  Given $f,g\in \mathcal{M}_n$  with general tribes expansions
  $$f=\max\{\min{T_1},\dots, \min{T_k}\}, \quad\quad\mbox{and}\quad\quad g=\max\{\min{U_1},\dots, \min{U_j}\},$$ then $g\leq f$ if and only if for all $i \in [j]$ there is $\ell\in [k]$ such that $T_{\ell}\subset U_i$. 
  \end{lemma}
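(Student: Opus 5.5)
We prove both directions by looking at the boundary points of the two functions, using the correspondence between sets $\Lambda\subset[n]$ and points $x_\Lambda\in\{-1,1\}^n$ from the proof of Sperner's Lemma (Lemma~\ref{lem:Sperner's}). The one fact we use repeatedly is this: for a general tribes function $F=\max\{\min T_1,\dots,\min T_k\}$ and any $y$, we have $F(y)=1$ if and only if some tribe $T_\ell\subset\Lambda_y:=\{i: y_i=1\}$. Indeed, $\min T_\ell(y)=1$ exactly when $y_i=1$ for all $i\in T_\ell$. We also interpret $g\le f$ pointwise; since both functions take values in $\{-1,1\}$, this means $g(y)=1\Rightarrow f(y)=1$.

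\emph{($\Leftarrow$)} Suppose every $U_i$ contains some $T_\ell$. Take $y$ with $g(y)=1$. Then some $U_i\subset \Lambda_y$. Choosing $T_\ell\subset U_i$, we get $T_\ell\subset\Lambda_y$, hence $f(y)=1$. Therefore $g\le f$.

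\emph{($\Rightarrow$)} Suppose $g\le f$ and fix $i\in[j]$. Consider the point $y=x_{U_i}$, whose coordinates equal $1$ exactly on $U_i$. Then $\min U_i(y)=1$, so $g(y)=1$, and hence $f(y)=1$. By the basic fact, some $T_\ell\subset\Lambda_y=U_i$, which is what we want.

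The only point that needs care is the degenerate case where a tribes expansion is empty or contains an empty tribe, which is how constant functions would be represented. We will fix conventions: $\max\emptyset=-1$, and $\min\emptyset=1$, so an empty tribe gives the constant $1$. With these conventions the basic fact, and therefore the argument above, still holds verbatim, because $\emptyset\subset\Lambda_y$ always. Beyond this bookkeeping there is no real obstacle; minimality of the tribes representation is not needed.
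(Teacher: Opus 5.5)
Your proof is correct and matches the paper's argument essentially step for step: for the forward direction you test $g\le f$ at the point whose coordinates equal $1$ exactly on $U_i$, and for the reverse direction you use $T_\ell\subset U_i$ to force $f=1$ wherever $g=1$. Your conventions for empty expansions and empty tribes are extra bookkeeping the paper does not address, but they are harmless and consistent with the statement.
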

  \begin{proof}
  Note that $g\leq f$ if and only if  $g(x)\leq f(x)$ for all $x\in \{-1,1\}^n$, and this occurs if and only if $f(x)=-1$ implies $g(x)=-1$, and $g(x)=1$ implies $f(x)=1$. Now $g(x)=1$ if and only if  $x_m=1$ for all $m\in U_{i}$ for some $i\in [j]$, similarly for $f$. 
  On the other hand,  $f(x)=-1$   if and only if  $x$ has a component equal to $-1$ in every tribe $T_{\ell}$,  similarly  for  $g$.  
  
 Assume  that $g\leq f$. Given $i\in [j]$, let  $x\in \{-1,1\}^n$ be such that   $x_m=1$ if and only if $m\in U_i$, then $g(x)=1$ and by assumption $f(x)=1$. This implies that there is $\ell\in [k]$ such that $x_m=1$ for all $m\in T_{\ell}$ and therefore $T_{\ell}\subset U_i$. 
 
 Assume now that for all $i \in [j]$ there is $\ell\in [k]$ such that $T_{\ell}\subset U_i$. If $f(x)=-1$ then $x$ has a component equal to $-1$ in every tribe $T_{\ell}$, and by assumption also in every tribe $U_i$, hence $g(x)=-1$. If $g(x)=1$ then there is a tribe $U_i$ such that $x_m=1$ for all $m\in U_i$, by assumption this implies that $x_m=1$ for all $m\in T_{\ell}$ for some $\ell\in [k]$, and hence $f(x)=1$. We conclude that $g\leq f$.
  \end{proof}
  
 The following lemma shows that we can always find a monotone Boolean function such that the $L^1$-norm of its Fourier coefficients can be made arbitrarily large.
\begin{lemma}\label{claim4}
 For all $\lambda >0$ there is $n> 0$ and a   function $f\in \mathcal{M}_n$ such that 
 $$\|\widehat{f}\|_1:= \sum_{S\subset [n]} |\widehat{f}(S)| > \lambda.$$
 \end{lemma}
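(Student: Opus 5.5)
Since the statement only asks for existence, I will use an explicit family: the majority functions. Alternatively, the uniform disjoint tribes ${\rm Tribes}_{s,w}$, whose Fourier coefficients are explicit in \cite[Proposition 4.14]{O'Do21}, would also work. The cleanest route, though, avoids computing coefficients at all and instead lower-bounds $\|\widehat f\|_1$ through the spectral entropy.

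For any $f\in\mathcal{B}_n$, Parseval gives $\sum_S |\widehat f(S)|^2=1$. Let $\mu(S)=|\widehat f(S)|^2$, a probability distribution on subsets of $[n]$. By Jensen's inequality applied to the concave function $\log$,
\[
{\bf H}(f)=\sum_S \mu(S)\log\frac{1}{\mu(S)}\le \log\Big(\sum_{S:\widehat f(S)\ne0} \mu(S)\cdot\frac{1}{\mu(S)}\Big)\cdot
\]
That bound only counts the support, so I will use a sharper estimate instead:
\[
\sum_S \mu(S)\log\frac{1}{|\widehat f(S)|}\le \log\sum_S \mu(S)\frac{1}{|\widehat f(S)|}=\log \|\widehat f\|_1 .
\]
The left side equals $\tfrac12{\bf H}(f)$, so
\[
\|\widehat f\|_1\ge 2^{{\bf H}(f)/2}.
\]
It therefore suffices to exhibit monotone Boolean functions with unbounded spectral entropy.

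Unbounded entropy is easy to obtain. For ${\rm Maj}_n$ with $n$ odd, the level-one weight satisfies $\sum_i \widehat{{\rm Maj}_n}(\{i\})^2\to 2/\pi$. By symmetry each singleton coefficient equals $\widehat{{\rm Maj}_n}(\{1\})={\bf I}_1({\rm Maj}_n)=\binom{n-1}{(n-1)/2}2^{-(n-1)}\approx\sqrt{2/(\pi n)}$, using Proposition~\ref{prop:influenceMonotone}. The singletons alone then contribute about $\tfrac{2}{\pi}\log\big(\tfrac{\pi n}{2}\big)\to\infty$ to ${\bf H}$. More directly,
\[
\|\widehat{{\rm Maj}_n}\|_1\ge \sum_i \widehat{{\rm Maj}_n}(\{i\}) = {\bf I}({\rm Maj}_n)= n\binom{n-1}{(n-1)/2}2^{-(n-1)}\sim\sqrt{2n/\pi},
\]
which tends to infinity. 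This one-line argument already proves the lemma, so I would present it as the main proof, with the entropy inequality as an optional remark.

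The only step requiring care is the asymptotic (or even just the lower bound) for the central binomial coefficient. The elementary bound $\binom{2k}{k}\ge 4^k/(2k+1)$ gives ${\bf I}({\rm Maj}_n)\ge n\cdot 2^{n-1}/(n\,2^{n-1})=1$, which is not enough. I would instead use $\binom{2k}{k}\ge 4^k/(2\sqrt{k})$, proved by a short induction on $k$. This yields ${\bf I}({\rm Maj}_n)\ge n/\sqrt{2(n-1)}\ge\sqrt{n/2}$. Finally, choosing $n$ odd with $\sqrt{n/2}>\lambda$ completes the argument.
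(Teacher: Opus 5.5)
Your proof is correct, but it follows a different route from the paper's. You exploit monotonicity directly. By Proposition~\ref{prop:influenceMonotone} the singleton coefficients of a monotone $f$ are nonnegative and sum to ${\bf I}(f)$, so $\|\widehat f\|_1\ge {\bf I}(f)$, and any monotone family with unbounded total influence will do. For majority, your bound $\binom{2k}{k}\ge 4^k/(2\sqrt k)$ holds (the induction step reduces to $(2k+1)^2\ge 4k(k+1)$) and gives ${\bf I}({\rm Maj}_n)\ge\sqrt{n/2}$. The paper instead takes disjoint width-$3$ tribes $F_k$ and writes $H_k=1-F_k$ through the recursion $H_{k+1}=\frac12G_{k+1}H_k$, whose factors depend on disjoint variables. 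It then uses the multiplicativity of the Fourier $\ell^1$ norm (Remark~\ref{remark2}) to get $\|\widehat{H}_{k+1}\|_1\ge\frac32\|\widehat{H}_k\|_1$, hence growth exponential in $n$. Your level-one argument could not detect that example, since ${\bf I}(F_k)=\frac{3k}{4}\left(\frac78\right)^{k-1}\to0$. The paper's witness carries a huge $\ell^1$ mass on high-degree coefficients while its influence vanishes; yours gives only $\sqrt n$ growth, but with a one-line proof. What the paper's choice really buys is downstream. Its witness is a disjoint tribes function, which is exactly what Lemma~\ref{lem:f-2deltay} needs to identify the tribes structure of the function $F$ from Lemma~\ref{lem:BMN}, and that structure is what leads to the $\delta$-tribes in Section~\ref{sec:choosing-variable}. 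Proposition~\ref{cor:CSDSfalse} by itself would go through verbatim with majority, since it uses only $\|\widehat f\|_1>\lambda$ and non-constancy. The entropy detour ($\|\widehat f\|_1\ge 2^{{\bf H}(f)/2}$ via Jensen) is correct but unnecessary. The unfinished support-size line just before it should be deleted.
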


 \begin{proof}[Proof of Lemma~\ref{claim4}]
 For each $k\geq 1$, let $n=3(k+1)$, and let us divide the $n$ variables into $k+1$ consecutive disjoint blocks with $3$ elements in each block. Let $g_1, g_2, \dots, g_{k+1}$ denote the minimum function in each block, and let $F_{\ell}\in \mathcal{B}_n$ for each $\ell\in[k+1]$ denote the following disjoint tribes function (a monotone function)
 $$F_{\ell}= \max\{g_1,g_2,\dots, g_{\ell}\}.$$
 We can write $F_{k+1}=\max\{ F_k, g_{k+1}\},$ and by  \eqref{eq:max} we have that  $ F_{k+1} = 1 - \frac12 (1-g_{k+1})(1-F_k)$.  Define  $H_k=1-F_k$ and $G_k=1-g_k$. These are not  Boolean functions as their images are in $\{0,2\}$ but we can still do algebra and Fourier analysis with them. We have the recurrence relation:
  \[ H_{k+1}=\frac12 G_{k+1}H_k.\]
  Let $\Lambda^k$ denote the labels of the variables in $F_k$ and $\Lambda_{k+1}$ the labels of the  variables in  $g_{k+1}$. These provide a partition for $[n]$ and, by Remark~\ref{remark2}, we have  that for $S\subset [n]$:
  \[ \widehat{H}_{k+1}(S) = \frac12 \widehat{G}_{k+1}(S\cap \Lambda_{k+1}) \widehat{H}_k(S\cap \Lambda^k),\]
 which implies that
 \[ \sum_{S\subset [n]} \big | \widehat{H}_{k+1}(S)\big | = \frac12 \Big ( \sum_{A\subset \Lambda_{k+1}} \big | \widehat{G}_{k+1}(A)\big | \Big )\Big ( \sum_{B\subset \Lambda^{k}} \big | \widehat{H}_{k}(B)\big | \Big ).\]
 A simple calculation shows that  for $G_{k+1}=1-g_{k+1}=\min\{x_i: i\in\Lambda_k\}$,we have:
 \[ \big | \widehat{G}_{k+1}(A) \big | = \left \{ \begin{array}{cc} 2\big (1-\frac{1}{2^3}\big ) & A=\emptyset\\
  											    \frac{2}{2^3} & A\neq \emptyset \end{array} \right. . \]
 Therefore,
 ${  \sum_{A\subset \Lambda_{k+1}} \big | \widehat{G}_{k+1}(A)\big | = \frac{14}{8} +\frac{2}{2^3}(2^3-1) = \frac{14}{4}>3,}$
 which in turn implies that
 \[ \|\widehat{H}_{k+1}\|_1\geq \frac32 \|\widehat{H}_k\|_1.\]
 Since $\|\widehat{H}_1\|_1>0$, then   $\lim_{k\to\infty} \|\widehat{H}_k\|_1=\infty$, and  therefore  $\lim_{k\to\infty} \|\widehat{F}_k\|_1=\infty$.  Choose $k$ large enough so that $\|\widehat{F}_k\|_1 >\lambda$, let $n=3k$ then $f=F_k\in \mathcal{M}_n$ has the desired property.
  \end{proof}

  Recall that  $\mathcal{M}^*_n$ denotes the  functions $f\in\mathcal{M}_n$ that are not constant. 
The following lemma describes how to produce an $F\in \mathcal{M}^*_{n+1}$ given $F^+\in \mathcal{M}^*_n$, by slightly perturbing $F^+$ at the boundary to produce a monotone function $F^-\leq F^+$.
 
 \begin{lemma}\label{lem:BMN}
  If $f\in \mathcal{M}^*_n$ then there is $y\in \{-1,1\}^n$ such that the function $F\in \mathcal{B}_{n+1}$ defined by $F^+=f$ and $F^- = f-2\delta_y$ belongs to $\mathcal{M}^*_{n+1}$.
 \end{lemma}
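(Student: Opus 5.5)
We take $y$ to be a boundary point of $f$, i.e.\ a minimal point of the set $\{x: f(x)=1\}$. By Sperner's Lemma (Lemma~\ref{lem:Sperner's}), $f=\max\{\min T_\Lambda:\Lambda\in\mathcal S\}$ with $\mathcal S$ a Sperner family, and since $f$ is non-constant, $\mathcal S\neq\emptyset$. Pick any $\Lambda\in\mathcal S$ and let $y=y_\Lambda$, so $y_i=1$ iff $i\in\Lambda$. By construction $f(y)=1$, and $f(x)=-1$ for all $x<y$. Then $F^-=f-2\delta_y$ is Boolean: it equals $f$ off $y$, and at $y$ it takes the value $1-2=-1$.

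We then invoke Proposition~\ref{prop:monotonePaul}. It suffices to check three things: $F^+=f\in\mathcal M_n$, which is given; $F^-\le F^+$, which is clear since $F^-$ differs from $f$ only at $y$, where it is smaller; and $F^-\in\mathcal M_n$. For monotonicity of $F^-$, take $x\le z$. If neither point is $y$, then $F^-(x)=f(x)\le f(z)=F^-(z)$. If $x=y$, then $F^-(x)=-1\le F^-(z)$ trivially. If $z=y$ and $x<y$, then by minimality $F^-(x)=f(x)=-1=F^-(y)$. Hence $F^-$ is monotone, and so $F\in\mathcal M_{n+1}$.

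Finally, $F$ is non-constant because $F(y,1)=f(y)=1$ and $F(y,-1)=-1$. In terms of tribes, $F^-$ is simply $f$ with the tribe $\Lambda$ removed and replaced by the sets $\Lambda\cup\{j\}$ for $j\notin\Lambda$. This tribe description is not needed for the proof.

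The only step requiring care is the case $z=y$ in the monotonicity check. It is exactly there that we need $y$ to be a minimal point of $\{f=1\}$ rather than an arbitrary point with $f(y)=1$. This is why we choose $y$ from the Sperner family (equivalently, from the boundary of $f$). Everything else is routine.
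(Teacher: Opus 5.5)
Your proposal is correct and follows essentially the same route as the paper: take $y$ on the boundary of $f$ (a minimal point of $\{f=1\}$), check monotonicity of $f-2\delta_y$ only for comparisons involving $y$, and conclude via Proposition~\ref{prop:monotonePaul}. The only difference is cosmetic: you witness non-constancy with $F(y,1)=1$, $F(y,-1)=-1$, whereas the paper uses $F(e,1)=1$ and $F(-e,-1)=-1$.
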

 
 \begin{proof}  
 Let $y\in \{-1,1\}^n$ be a point on the boundary of the monotone Boolean function $f$, that is $f(y)=1$ and $f(x)=-1$ for all $x<y$ (by monotonicity it must be that $f(x)=1$ for all $x>y$). The function $g=f-2\delta_y$ is Boolean with $g(y)=-1$ and $g(x)=f(x)$ for all $x\neq y$.  
 By definition, $g(x)\leq f(x)$ for all $x\in \{-1,1\}^n$. To verify monotonicity of $g$, all we need to check is what happens when one of the points is $y$ (since $g(x)=f(x)$ for all $x\neq y$ and $f$ is monotone). Clearly, $g(y)=-1\leq g(x)$ for all $x\in \{-1,1\}^n$, in particular $g(y)\leq g(x)$ whenever $y\leq x$. Suppose now that $x <y$, then  $f(x)=-1$, and  $g(x)=f(x)=-1=g(y)$.  We conclude that $g$ is monotone, and by Proposition~\ref{prop:monotonePaul}, so is $F$, furthermore $F$ is not constant as $F(e,1)=f(e)=1$ and $F(-e,-1) = g(-e)=f(-e)=-1$, so $F\in \mathcal{M}^*_{n+1}$. 
 \end{proof}

For the final result, we need to introduce a little more notation. The function $\beta(f) = \frac12 (1-f)$ is a bijection between the set of Boolean functions and the set of functions on the Hamming cube taking values in $\{0,1\}$ (instead of  $\{-1,1\}$). Formula~\eqref{eq:AND}  for the max function, immediately implies the following:
 \begin{proposition}\label{prop:monotone_product}
  If a Boolean function $f = \emph{max}(g_1, g_2, \dots, g_k)$  for some set of Boolean functions $\{ g_i \}_{i=1}^k$, then $\displaystyle \beta(f) = \prod_{i=1}^k \beta(g_i)
$. In particular, when $f$ is a monotone function, and hence a general tribes function, we have $\displaystyle \beta(f) = \prod_{i=1}^k \beta(\emph{min}(T_i))$. 
 \end{proposition}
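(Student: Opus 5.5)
The statement is Proposition~\ref{prop:monotone_product}. The plan is a direct induction on $k$, driven by the two-variable case of formula~\eqref{eq:AND}.

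First I will restate~\eqref{eq:AND} in terms of $\beta$. The key observation is that $\beta(x_i)=\frac12(1-x_i)$. Formula~\eqref{eq:AND} then says exactly
\[
\beta(\max\{x_1,\dots,x_k\})=\tfrac12\big(1-\max\{x_1,\dots,x_k\}\big)=\frac{(1-x_1)\cdots(1-x_k)}{2^k}=\prod_{i=1}^k\beta(x_i).
\]
This identity holds pointwise for any $\pm1$ inputs.

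Next, for Boolean functions $g_1,\dots,g_k$ on $\{-1,1\}^n$, fix a point $x$ and apply this identity to the $\pm1$ values $g_1(x),\dots,g_k(x)$. This gives $\beta(f)(x)=\prod_i\beta(g_i)(x)$ for every $x$, which is the claim.

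Alternatively, to avoid appealing to the $k$-variable formula, I can induct on $k$ using~\eqref{eq:max}. Write $\max\{g_1,\dots,g_k\}=\max\{\max\{g_1,\dots,g_{k-1}\},g_k\}$ and use $1-\max\{a,b\}=\frac12(1-a)(1-b)$. This yields $\beta(\max\{a,b\})=\beta(a)\beta(b)$, and the induction closes immediately.

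As a sanity check, the formula is also clear in the $\{0,1\}$ language. $\beta(f)=1$ exactly when $f=-1$, which happens exactly when every $g_i=-1$, that is, when every $\beta(g_i)=1$. So $\beta(f)$ is the product of the indicators $\beta(g_i)$.

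For the monotone case, the constant function $f\equiv -1$ is handled as an empty max, or trivially. Otherwise, Sperner's Lemma~\ref{lem:Sperner's} gives the representation $f=\max\{\min T_1,\dots,\min T_k\}$. Applying the first part with $g_i=\min T_i$ gives the stated product.

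There is no real obstacle here. The only point needing care is that~\eqref{eq:AND} is an identity for $\pm1$-valued inputs, so it may be applied pointwise to functions.
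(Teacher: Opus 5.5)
Your proposal is correct and is essentially the paper's argument: the paper simply observes that formula~\eqref{eq:AND}, which is exactly $\beta(\max\{x_1,\dots,x_k\})=\prod_{i}\beta(x_i)$ for $\pm1$ inputs, applied pointwise to the values $g_i(x)$ gives the claim. One small point: Sperner's Lemma~\ref{lem:Sperner's} is stated only for non-constant $f$, so the case $f\equiv 1$ is not covered by your ``otherwise'' (it needs the convention $\min\emptyset=1$), but the statement already assumes $f$ is given as a general tribes function, so nothing is lost.
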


\section{An Inductive Approach to FEI}\label{sec:InductiveApproachFEI}

In this section we describe in more detail the  inductive approach to the FEI conjecture,   encoded in Ansatz~\ref{ansatz1} presented in the introduction, and how it leads to Ansatz~\ref{ansatz2}.

\subsection{Binary entropy}\label{sec:binaryEntropy}

We will need some notation and a lemma. Let
\[ \phi(x)=\left\{\begin{array}{cc} x\log ({1}/{x}) & \mbox{if $x>0$} \\
                                                  0 & \mbox{if $x=0$},\end{array}\right.  
  \quad   \mbox{and}  \quad
  \psi(x)=\phi (x)+\phi (1-x) \; \mbox{for $0\leq x\leq 1$},
  \]
  where here  $\log x=\log_2x$.  The function $\psi$, called the \emph{binary entropy},  is continuous, symmetric around $1/2$, and non-negative on $[0,1]$. Note also that $\psi'(x)=\phi'(x)-\phi'(1-x)$ so that $\psi'(1/2)=0$. 
We can rewrite the entropy of $f$ as follows
\[ {\bf H}(f) = \sum_{S\subset  [n]} \phi (|\widehat{f}(S)|^2).\]
The following observation (recorded as a lemma) will allow us to further rewrite (in Appendix~B, Section~\ref{sec:algAcrobatics}) the entropy of $f$ in terms of the binary entropy $\psi$, in a way that is more manageable for us.
\begin{lemma}\label{lem:binary-entropy} Let $x,y\in \R$, then 
 $\displaystyle{\phi(x^2) + \phi(y^2) = \phi(x^2+y^2) + (x^2+y^2)\,\psi\bigg (\frac{x^2}{x^2+y^2}\bigg ).}$
\end{lemma}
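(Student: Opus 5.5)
We verify the identity by direct computation, treating the degenerate cases first. If $x^2+y^2=0$, then $x=y=0$, and both sides vanish: the right side is $\phi(0)+0\cdot\psi(\cdot)$, where the coefficient $x^2+y^2=0$ kills the (undefined) argument of $\psi$. So we adopt the convention that this term is zero. Otherwise set $s=x^2+y^2>0$ and $t=x^2/s\in[0,1]$, so that $x^2=ts$ and $y^2=(1-t)s$.

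The key step is the scaling rule for $\phi$: for $a\ge 0$ and $s>0$,
\[
\phi(as)=as\log\frac{1}{as}=s\,a\log\frac1a + a\,s\log\frac1s = s\,\phi(a)+a\,\phi(s).
\]
This also holds when $a=0$, since both sides are $0$. Apply it with $a=t$ and with $a=1-t$, and add:
\[
\phi(x^2)+\phi(y^2)=s\big(\phi(t)+\phi(1-t)\big)+(t+(1-t))\,\phi(s)=\phi(s)+s\,\psi(t).
\]
This is exactly the claimed identity.

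The argument needs no real analysis; the only care required is the boundary cases $t=0$ or $t=1$ (one of $x,y$ equal to zero). There the convention $\phi(0)=0$ makes the scaling rule hold trivially and $\psi(0)=\psi(1)=0$, so the identity reduces to $\phi(x^2)=\phi(x^2)$. The main obstacle is therefore just bookkeeping of these conventions, in particular the case $x=y=0$, where the argument of $\psi$ is undefined and must be read as multiplied by zero.
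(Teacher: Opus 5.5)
Your proof is correct and is essentially the calculation with logarithm properties that the paper leaves to the reader. The scaling rule $\phi(as)=s\,\phi(a)+a\,\phi(s)$ organizes that calculation cleanly, and your reading of the case $x=y=0$, where $\psi$ is evaluated at an undefined argument multiplied by $0$, is the sensible convention that the paper leaves implicit.
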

\begin{proof}
The proof is a calculation using the properties of the logarithm and is left to the reader.                     
\end{proof}

Some other basic and useful properties of these functions are
\begin{equation}\label{eq:gx/2=gx/2}
\phi \Big (\frac{x}{2} \Big )=\frac12 \phi (x) +\frac{x}{2}, \quad \quad\quad\quad  \psi (x)\leq \sqrt{x(1-x)}.
\end{equation}

\subsection{Inductive step}
Let  $f\in\mathcal{B}_n$, and split $f$  by the variable  $x_m$. 
Recall from~\eqref{lem:induction-influence2} that,
\begin{equation*} 
{\bf I}(f)=\frac12\big [{\bf I}(f^+) + {\bf I}(f^-)\big ] + {\bf I}_m(f).
\end{equation*}
It should be clear that an induction argument to prove the FEI conjecture will work if we can show that for all $n\in \mathbb{N}$ and $f\in \mathcal{B}_n$ there is an $m\in [n]$ such that
\begin{equation}\label{convexity-estimate-entropy1}
\Delta{\bf H}(f):={\bf H}(f) -\frac12 \big [{\bf H}(f^+) + {\bf H}(f^-)\big ] \leq K\,  {\bf I}_m(f).
\end{equation}
If this inequality holds for some class of functions that is closed under splitting, such as the Boolean functions, then the FEI conjecture holds for that class of functions.

We have not been able to prove this for either Boolean or monotone Boolean functions. However, in Section~\ref{sec:nuevas_clases} we identify classes of Boolean functions that will satisfy~\eqref{convexity-estimate-entropy1} at all nodes of a binary tree. We  summarize the discussion in the following ansatz stated in the introduction.

\begin{mythm}{1.1}\label{metaThm1} 
Let $\mathcal{A}$ be a class of Boolean functions and suppose  there is a constant $K>0$ such that for every  $f\in\mathcal{A}$ there is a splitting variable $x_m$ for which:
\begin{itemize}
\item[(i)]  The split functions $f^{\pm}\in \mathcal{A}$, 
 \item[(ii)]  $\Delta {\bf H}(f) \leq K\,{\bf I}_m(f)$.
 \end{itemize}
Then the FEI conjecture holds for $\mathcal{A}$, i.e.  ${\bf H}(f)\leq K\,{\bf I}(f)$  for every $f\in \mathcal{A}$.
\end{mythm}
\begin{proof}
The details of the proof by induction  on the dimension $n$ are left to the reader.
\end{proof}

The next ansatz, also from the introduction, shows that the FEI conjecture holds for any Boolean function $f$ having a stopping binary tree $\T$  where all the functions $f^\rho$  at branching nodes $\rho$ satisfy~\eqref{convexity-estimate-entropy1} (appropriately modified to reflect the splitting variable at each node), and at stopping nodes $\rho$ the FEI conjecture holds.
\begin{mythm}{1.2}\label{ansatz4}
\label{metaThm} Let $f$ be a Boolean function with a stopping tree $\T$, and suppose that  there is $K>0$ such that 
\begin{itemize}
\item[(i)] ${\bf H}(f^\rho)\leq K\, {\bf I}(f^\rho)$ at stopping nodes $\rho\in \sigma_{\T}$.
\item[(ii)] ${\Delta}{\bf H}(f^\rho) \leq K\, {\bf I}_{i_\rho} (f^\rho)$ at branching nodes $\rho\in \beta_{\T}$, 
\end{itemize}
Then  ${\bf H}(f)\leq K\, {\bf I}(f).$
\end{mythm}
\begin{proof}
Let $\T$ be the stopping tree  associated to $f$ satisfying the hypothesis in the theorem.  Recall formula~\eqref{eq:stoppingTreeInfluenceFourier}, for the influence of a function in terms of the influence on stopping nodes of $\T$ plus remainders on each of the branching nodes of $\T$:
\[{\bf I}(f) = \sum_{\rho\in \sigma_{\T}} \frac{1}{2^{k_\rho}}{\bf I}(f^\rho) +  \sum_{\rho\in\beta_{\T}}  \frac{1}{2^{k_\rho}} {\bf I}_{i_\rho}(f^\rho).\]
A similar identity holds replacing the  total influence ${\bf I}$ by the entropy ${\bf H}$ and the individual influence ${\bf I}_{i_{\rho}}$ by $\Delta{\bf H}(f^{\rho})$, by recursively using the formula ${\bf H}(f) = \frac{{\bf H}(f^+)+{\bf H}(f^-)}{2} + \Delta{\bf H}(f)$. Then we can  use hypothesis (i) on the branching nodes, and hypothesis (ii) on the stopping nodes, 
 to conclude that
\begin{eqnarray*}
{\bf H}(f) & =  &  \sum_{\rho\in \sigma_{\T}} \frac{1}{2^{k_\rho}}{\bf H}(f^\rho) +  \sum_{\rho\in\beta_{\T}}  \frac{1}{2^{k_\rho}}\Delta{\bf H}(f^\rho)\\
&\leq &  K\Big (\sum_{\rho\in \sigma_{\T}} \frac{1}{2^{k_\rho}}{\bf I}(f^\rho) +  \sum_{\rho\in\beta_{\T}}  \frac{1}{2^{k_\rho}}{\bf I}_{i_\rho}(f^\rho) \Big )
\, = \,  K\, {\bf I}(f).
\end{eqnarray*}
\end{proof}

After performing some algebraic acrobatics using Lemma~\ref{lem:binary-entropy},  detailed in Appendix~B, Section~\ref{sec:algAcrobatics},  we show that  \eqref{convexity-estimate-entropy1} is equivalent to:
\begin{equation}\label{final-inequality}
\tsup[1]{\Delta}{\bf H}(f):= \sum_{S\subset [n]\setminus\{m\}}\frac12\big (\widehat{f^+}(S)^2+\widehat{f^-}(S)^2\big )\,\psi \bigg (\frac{\frac12\big (\widehat{f^+}(S)-\widehat{f^-}(S)\big )^2}{\widehat{f^+}(S)^2+\widehat{f^-}(S)^2}\bigg )\leq K\, {\bf I}_m(f)
\end{equation}
where $x_m$ is the splitting variable. This yields the following result:

\begin{corollary} \label{coro:Ansatz1-2}
We can replace  hypothesis {\rm (ii)}  in {\rm Ansatz~\ref{metaThm1}}  and  {\rm Ansatz~\ref{ansatz4}} (appropriately adapted to the branching nodes) by~\eqref{final-inequality}.
 \end{corollary}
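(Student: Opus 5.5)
The plan is to compare $\Delta{\bf H}(f)$ and $\tsup[1]{\Delta}{\bf H}(f)$ term by term over $S\subset[n]\setminus\{m\}$, using only \eqref{eq:fhat-fpmhat}, Lemma~\ref{lem:binary-entropy} and \eqref{eq:gx/2=gx/2}. I expect the two quantities to be equivalent up to an additive $O({\bf I}_m(f))$ rather than literally equal. The corollary then follows with $K$ replaced by $K+4$, which is harmless in both ansatzes.

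Fix $S\subset[n]\setminus\{m\}$ and write $a=\widehat{f^+}(S)$, $b=\widehat{f^-}(S)$ and $s=a^2+b^2$. By \eqref{eq:fhat-fpmhat}, the contribution of the pair $S$, $S\cup\{m\}$ to ${\bf H}(f)$ is
$$\phi\big(\tfrac{(a+b)^2}{4}\big)+\phi\big(\tfrac{(a-b)^2}{4}\big).$$
Lemma~\ref{lem:binary-entropy} with $x=\frac{a+b}2$, $y=\frac{a-b}2$ (so that $x^2+y^2=s/2$) rewrites this as
$$\phi(s/2)+\tfrac s2\,\psi\Big(\tfrac{(a+b)^2}{2s}\Big).$$
Since $\frac{(a+b)^2}{2s}+\frac{(a-b)^2}{2s}=1$ and $\psi$ is symmetric about $1/2$, the argument of $\psi$ may be replaced by $\frac{(a-b)^2}{2s}$. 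This is exactly the summand of $\tsup[1]{\Delta}{\bf H}(f)$.

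Next, apply Lemma~\ref{lem:binary-entropy} again with $x=a$, $y=b$ to get
$$\tfrac12\big(\phi(a^2)+\phi(b^2)\big)=\tfrac12\phi(s)+\tfrac s2\,\psi(a^2/s).$$
By \eqref{eq:gx/2=gx/2}, $\phi(s/2)-\frac12\phi(s)=\frac s2$. Summing over $S$, I expect the identity
$$\Delta{\bf H}(f)=\tsup[1]{\Delta}{\bf H}(f)+\sum_{S\subset[n]\setminus\{m\}}\tfrac{s_S}{2}\big(1-\psi(a_S^2/s_S)\big).$$
Since $0\le\psi\le1$, this immediately gives $\tsup[1]{\Delta}{\bf H}(f)\le\Delta{\bf H}(f)$.

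The remaining and main step is to control the error sum by ${\bf I}_m(f)$. I will use the elementary bound $\psi(t)\ge4t(1-t)$ on $[0,1]$, which holds by concavity of $\psi$ in the variable $t(1-t)$, or by checking it directly. This gives
$$1-\psi(t)\le(1-2t)^2, \qquad\text{so that}\qquad 1-\psi(a^2/s)\le\frac{(a^2-b^2)^2}{s^2}.$$
Hence each error term is at most
$$\frac{(a-b)^2(a+b)^2}{2(a^2+b^2)}\le(a-b)^2,$$
using $(a+b)^2\le2(a^2+b^2)$. Summing and invoking \eqref{eq:influence-fpm}, the error is at most $\sum_S(a_S-b_S)^2=4\,{\bf I}_m(f)$.

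Therefore
$$\tsup[1]{\Delta}{\bf H}(f)\le\Delta{\bf H}(f)\le\tsup[1]{\Delta}{\bf H}(f)+4\,{\bf I}_m(f).$$
So \eqref{final-inequality} with constant $K$ gives hypothesis (ii) with constant $K+4$, and conversely. Substituting this into the proofs of Ansatz~\ref{metaThm1} and Ansatz~\ref{ansatz4}, applied at each branching node $\rho$ with variable $i_\rho$, yields the corollary, with the constant adjusted from $K$ to $K+4$. The only delicate point is the inequality $\psi(t)\ge4t(1-t)$; everything else is bookkeeping with the two lemmas quoted above.
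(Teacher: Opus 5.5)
Your proof is correct and follows the paper's Appendix~B essentially step for step. You derive the same per-$S$ identity \eqref{eq:convexityH} from Lemma~\ref{lem:binary-entropy} and \eqref{eq:gx/2=gx/2}, and your bound $1-\psi(t)\le(1-2t)^2$ is exactly Lemma~\ref{lem:elementary}, which the paper proves via the power series of $1-\psi$ in $\epsilon^2=(1-2t)^2$; that series also justifies your concavity remark. Your constant bookkeeping ($K\to K+4$ in the ${\bf I}_m$ normalization) is slightly more careful than the paper's ``$K=K'+1$''.
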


\begin{corollary}\label{coro2:Ansatz1-2}
We can replace  hypothesis {\rm (ii)}  in {\rm Ansatz~\ref{metaThm1}}  and
 {\rm Ansatz~\ref{ansatz4}} (appropriately adapted to the branching nodes) by 
 \begin{equation}\label{eq:CSDSfalse}
\tsup{\Delta}{\bf H}(f):= \sum_{S\subset [n]\setminus\{m\}} \big |\widehat{f^+}(S)^2-\widehat{f^-}(S)^2\big | \leq K\,{\bf I}_m(f).
 \end{equation}
 \end{corollary}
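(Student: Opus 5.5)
The plan is to reduce the statement to Corollary~\ref{coro:Ansatz1-2}. It suffices to show that the quantity $\tsup[1]{\Delta}{\bf H}(f)$ in \eqref{final-inequality} is bounded by an absolute multiple of $\tsup{\Delta}{\bf H}(f)$ in \eqref{eq:CSDSfalse}. Then \eqref{eq:CSDSfalse} with constant $K$ implies \eqref{final-inequality} with a constant comparable to $K$ (in fact no larger than $K$). By Corollary~\ref{coro:Ansatz1-2} we may then invoke Ansatz~\ref{metaThm1} or Ansatz~\ref{ansatz4} at each branching node.

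The comparison is done term by term in $S\subset[n]\setminus\{m\}$. Fix such an $S$ and write $a=\widehat{f^+}(S)$, $b=\widehat{f^-}(S)$. If $a=b=0$ the term contributes nothing to either sum, so assume $a^2+b^2>0$. Set
\[ t=\frac{\frac12(a-b)^2}{a^2+b^2}. \]
The key algebraic identity is
\[ 1-t=\frac{\frac12(a+b)^2}{a^2+b^2}, \]
which holds because $(a-b)^2+(a+b)^2=2(a^2+b^2)$. Consequently
\[ \sqrt{t(1-t)}=\frac{|a-b|\,|a+b|}{2(a^2+b^2)}=\frac{|a^2-b^2|}{2(a^2+b^2)}. \]

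Next we use an entropy bound of the form $\psi(x)\le C\sqrt{x(1-x)}$, as in \eqref{eq:gx/2=gx/2}. I would double-check the constant, since with $\log=\log_2$ one has $\psi(1/2)=1$, so the safe version is $\psi(x)\le 2\sqrt{x(1-x)}$. This is the classical bound on binary entropy, verified by comparing the two sides on $[0,1/2]$ via symmetry and concavity. With it we get
\[ \tfrac12(a^2+b^2)\,\psi(t)\le (a^2+b^2)\sqrt{t(1-t)}=\tfrac12\,|a^2-b^2|. \]
Summing over $S$ yields $\tsup[1]{\Delta}{\bf H}(f)\le\frac12\,\tsup{\Delta}{\bf H}(f)$. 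Hence \eqref{eq:CSDSfalse} implies \eqref{final-inequality} with constant $K/2\le K$.

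The only point requiring care is the constant in the bound on $\psi$. Any absolute constant suffices, since the FEI conclusion only needs some dimension-free $K$. The remaining work is bookkeeping: the replacement must be made at each branching node $\rho$ of the stopping tree, with $f^\rho$ and its splitting variable $x_{i_\rho}$ in place of $f$ and $x_m$. Since the inequality above is pointwise in $S$ and uses nothing about $f$ beyond its Fourier coefficients, this adaptation is immediate.
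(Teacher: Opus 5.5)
Your proposal is correct and is essentially the paper's own argument: reduce to Corollary~\ref{coro:Ansatz1-2} by bounding the left side of \eqref{final-inequality} term by term, using $\psi(x)\lesssim\sqrt{x(1-x)}$ together with the identity $(a^2+b^2)\sqrt{t(1-t)}=\tfrac12|a^2-b^2|$. You are also right that the paper's stated bound $\psi(x)\le\sqrt{x(1-x)}$ fails at $x=\tfrac12$ (where $\psi=1$), and your constant $2$ still gives exactly the needed estimate $\tfrac12(a^2+b^2)\psi(t)\le\tfrac12|a^2-b^2|$, so the corollary stands; the bound $\psi(x)\le 2\sqrt{x(1-x)}$ is true (e.g.\ from Tops{\o}e's bound $\psi(x)\le(4x(1-x))^{1/\ln 4}$), but symmetry and concavity alone do not establish it.
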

 \begin{proof}
 It suffices to show that if~\eqref{eq:CSDSfalse} holds then~\eqref{final-inequality} also holds. Indeed, as  $\psi (x)\leq \sqrt{x(1-x)}$, for $x\in[0,1]$, a calculation shows that
 \[ \big (\widehat{f^+}(S)^2+\widehat{f^-}(S)^2\big )\,\psi \left (\frac{\frac12\big (\widehat{f^+}(S)-\widehat{f^-}(S)\big )^2} {\widehat{f^+}(S)^2+\widehat{f^-}(S)^2}\right ) \leq 
 \frac12 \big |\widehat{f^+}(S)^2-\widehat{f^-}(S)^2\big |.\]
 Adding over all $S\subset [n]\setminus\{m\}$, and using ~\eqref{eq:influence-fpm}, gives the desired inequality.
 \end{proof}

Using ~\eqref{eq:influence-fpm}, we can replace the individual influence in any of these inequalities by 
\[
\frac14 \sum_{S\subset [n]\setminus\{m\}} \big (\widehat{f^+}(S)-\widehat{f^-}(S)\big )^2, 
\]
and we will regularly move over and back between these two versions.

In the following sections we will use the preceding methods to prove that the FEI conjecture holds for various classes of Boolean functions. This includes a mix of previously known and new results. We will make use of the following simple lemma about Boolean functions.

\begin{lemma}\label{simple_ineq}
For any $S\subset [n]\setminus\{m\}$, $f\in \mathcal{B}_n$, we have:
\[
\big | \widehat{f^+}(S)^2-\widehat{f^-}(S)^2 \big | \leq 4\,{\bf I}_m(f).
 \]
 where $f$ has been split by $x_m$.
\begin{proof}
\begin{eqnarray*}
\big | \widehat{f^+}(S)^2-\widehat{f^-}(S)^2 \big | &=& | \widehat{f^+}(S) + \widehat{f^-}(S) \big | \big | \widehat{f^+}(S) - \widehat{f^-}(S) \big |\\
&\leq & 2 \big |  \widehat{f^+}(S) - \widehat{f^-}(S) \big | = 2 \big | {\bf E}\left( (f^+ - f^- ) \chi_S \right) \big | \\
&\leq& 4 {\bf E} \left( \frac12 |f^+ - f^- | \right)  = 4 {\bf E} \left(\frac14 |f^+ - f^- |^2 \right) \\
&=& \,{\bf I}_m(f).
\end{eqnarray*}
Here we have use the fact that $\frac12 |f^+ - f^- |= \frac14 |f^+ - f^- |^2$ because $\frac12 |f^+ - f^- |$ can only take the values $0$ and $1$. 
\end{proof}
 \end{lemma}

\section{$\delta$-tribes}\label{sec:(counter)examples}

The class of Boolean functions is closed under splitting by no matter what variable, that is if $f\in \mathcal{B}_n$ then $f^{\pm}\in \mathcal{B}_{n-1}$.
The same holds true for monotone Boolean functions (see Proposition~\ref{prop:monotonePaul}) and for the subclass of monotone Boolean functions given by disjoint tribes. If in either of these categories: Boolean, monotone Boolean, or disjoint tribes, we could show  inequality~\eqref{final-inequality} holds, then we could prove the FEI conjecture is valid  for the corresponding category,  by Ansatz~\ref{ansatz1}. In this section we will show that the choice of splitting variable matters for the validity of the inequality. In Section~\ref{sec:monotone-almost-disjoint} we present an example of a family of $\delta$-tribes (a subset of the monotone functions) for which the wrong choice of splitting variable leads to the failure of the stronger  inequality~\eqref{eq:CSDSfalse}. In Section~\ref{sec:choosing-variable} we show that by choosing a different variable  the stronger inequality~\eqref{eq:CSDSfalse}  holds, and therefore so does  inequality~\eqref{final-inequality}.  

\subsection{Monotone  counterexample?}\label{sec:monotone-almost-disjoint}
 
 We use  Lemma~\ref{claim4} to construct for each $K>0$ a function $F\in \mathcal{M}_{n+1}$ for $n$ large enough  that disproves~\eqref{eq:CSDSfalse} when splitting by the variable $x_{n+1}$. 
 
 \begin{proposition}\label{cor:CSDSfalse}
 For any $K>0$, there is $n>0$ and an $F\in \mathcal{M}_{n+1}$ such that 
 \[ \sum_{S\subset [n]} \big | \widehat{F^+}(S)^2-\widehat{F^-}(S)^2\big |> K \sum_{S\subset [n]} \big | \widehat{F^+}(S)-\widehat{F^-}(S)\big |^2.\]
 \end{proposition}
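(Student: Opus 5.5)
Take $f\in\mathcal{M}^*_n$ with $\|\widehat{f}\|_1$ large, which Lemma~\ref{claim4} provides, and use Lemma~\ref{lem:BMN} to choose a boundary point $y$ of $f$. Define $F\in\mathcal{M}^*_{n+1}$ by $F^+=f$ and $F^-=f-2\delta_y$. Splitting by $x_{n+1}$, the right-hand side is explicit. Since $F^+-F^-=2\delta_y$, equation \eqref{eq:Fourier-coeff-1_a} gives $\widehat{F^+}(S)-\widehat{F^-}(S)=2\widehat{\delta_y}(S)=2^{1-n}\chi_S(y)$, and therefore
\[
\sum_{S\subset[n]}\big|\widehat{F^+}(S)-\widehat{F^-}(S)\big|^2 = 2^n\cdot 2^{2-2n}=2^{2-n}.
\]

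For the left-hand side, factor the difference of squares as in Lemma~\ref{simple_ineq}:
\[
\big|\widehat{F^+}(S)^2-\widehat{F^-}(S)^2\big| = \big|\widehat{F^+}(S)-\widehat{F^-}(S)\big|\,\big|\widehat{F^+}(S)+\widehat{F^-}(S)\big| = 2^{1-n}\,\big|2\widehat{f}(S)-2^{1-n}\chi_S(y)\big|.
\]
Summing over $S$ and using the triangle inequality bounds the left-hand side below by $2^{1-n}\big(2\|\widehat{f}\|_1-2^{1-n}\cdot 2^n\big)=2^{2-n}\big(\|\widehat{f}\|_1-1\big)$. The ratio of the left-hand side to the right-hand side is then at least $\|\widehat{f}\|_1-1$. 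Given $K$, Lemma~\ref{claim4} applied with $\lambda=K+1$ yields $f$ with $\|\widehat{f}\|_1>K+1$, which gives the strict inequality.

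The step needing care is that Lemma~\ref{lem:BMN} requires $f$ to be non-constant. The functions $F_k$ built in the proof of Lemma~\ref{claim4} are disjoint tribes, which are non-constant. Alternatively, a constant $f$ has $\|\widehat{f}\|_1=1$, so any $f$ with $\|\widehat{f}\|_1>K+1\geq 1$ is automatically non-constant. Beyond this check, the argument consists only of the two coefficient computations above.
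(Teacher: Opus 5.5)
Your proof is correct and follows the paper's argument essentially step for step. It uses the same $F$, obtained from Lemma~\ref{lem:BMN} applied to a non-constant $f$ from Lemma~\ref{claim4}. It computes the right-hand side as $2^{2-n}$ and bounds the left-hand side below by $2^{2-n}(\|\widehat{f}\|_1-1)$ via the triangle inequality, exactly as the paper does, and it rules out constant $f$ by the same observation that a constant has $\|\widehat{f}\|_1=1$.
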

 \begin{proof}
 For $\lambda >1$, by Lemma~\ref{claim4}, there is $n>0$ and  a monotone function $f\in \mathcal{B}_n$ such that $ \sum_{S\subset [n]} |\widehat{f}(S)| > \lambda$. If $f$ were constant then  $\widehat{f}(S) = 0$ for all $S\neq\emptyset$, and $|\widehat{f}(\emptyset)|=1$, therefore $\sum_{S\subset [n]} |\widehat{f}(S)| =1< \lambda$, which is a contradiction, hence $f \in \mathcal{M}^*_n$. Let  $F\in \mathcal{M}^*_{n+1}$ be the function provided by Lemma~\ref{lem:BMN} (see Figure~\ref{fig:counterexample3}(a)), then for $S\subset [n]$, there is $y\in \{-1,1\}^n$ such that
 \[ \widehat{F}^+(S)=\widehat{f}(S), \hskip 1in \widehat{F}^-(S)= \widehat{f}(S) -2\widehat{\delta}_y(S).\]
 But $|\widehat{\delta}_y(S)|=\frac{1}{2^n}$ for all $S\subset [n]$, thus $\big | \widehat{F^+}(S)-\widehat{F^-}(S)\big |=\frac{1}{2^{n-1}}$ for all $S\subset [n]$. Hence
 \[ \sum_{S\subset [n]} \big | \widehat{F^+}(S)-\widehat{F^-}(S)\big |^2  =  \sum_{S\subset [n]} \left (\frac{1}{2^{n-1}}\right )^2= 2^n
 \frac{1}{2^{2n-2}} = \frac{1}{2^{n-2}}, \]
 and
 \begin{eqnarray*}
  \sum_{S\subset [n]} \big | \widehat{F^+}(S)^2-\widehat{F^-}(S)^2\big | & = &  \sum_{S\subset [n]} \big | \widehat{F^+}(S)-\widehat{F^-}(S)\big | \big | \widehat{F^+}(S)+\widehat{F^-}(S)\big |\\
  &  = & \frac{1}{2^{n-1}} \sum_{S\subset [n]} \big | \widehat{F^+}(S)+\widehat{F^-}(S)\big | 
  \, = \, \frac{1}{2^{n-1}} \sum_{S\subset [n]} |2\widehat{F^+}(S)-2\widehat{\delta}_y(S)| \\
  &  \geq & \frac{1}{2^{n-2}} \sum_{S\subset [n]} \Big ( |\widehat{F^+}(S)|-\frac{1}{2^n}\Big ) 
  \, = \,  \frac{1}{2^{n-2}} \left (\sum_{S\subset [n]} |\widehat{f}(S)| - \sum_{S\subset [n]} \frac{1}{2^n}\right ) \\
  &  \geq  &  \frac{\lambda-1}{2^{n-2}}  \,  = \, (\lambda -1)  \sum_{S\subset [n]} \big | \widehat{F^+}(S)-\widehat{F^-}(S)\big |^2.
 \end{eqnarray*}
 Given $K>0$, choose $\lambda > K+1$, this shows that there is  $n\geq 1$ and  a monotone function $F\in \mathcal{B}_{n+1}$ such that the proposition holds,
 and therefore \eqref{eq:CSDSfalse} is false when splitting by the variable $x_{n+1}$.
 \end{proof}

  This calculation does not disprove inequality~\eqref{final-inequality} for the chosen splitting variable, that would require a separate but similar calculation.

  \subsection{Choosing the right splitting variable}\label{sec:choosing-variable}


  The example $F\in \mathcal{M}_{n+1}$ used in Section~\ref{sec:monotone-almost-disjoint},  for $n=3k$ large enough, has a general tribes decomposition of the form
   \[ F = \max\{\min T_1\dots, \min T_{k-1}, \min S_1, \dots, \min S_{n-3}, \min S_{n+1}\},\] 
  where the $T_{\ell}$ are consecutive, disjoint tribes with three elements and $S_j= \{x_j\}\cup T_k$, by Lemma~\ref{lem:f-2deltay}. 
  Note that the tribe $T_k$ plays a special role. In Section~\ref{sec:monotone-almost-disjoint}, we split $F$ using the variable   $x_{n+1}$ which only belongs to the tribe $S_{n+1}$.  Splitting by any variable from $T_k$, which belongs to all the $S_{\ell}$ tribes,  and  continuing to  split using the other variables in $T_k$,  we obtain the stopping tree depicted in Figure~\ref{fig:counterexample3}(b)  with disjoint tribes $H= \max\{\min T_1,\dots, \min T_{k-1}\}$ and $G=\max\{x_1,\dots, x_{n-3}, x_{n+1}\}$ at all stopping nodes,   and where $F^+_1$  and $F^+_2$ have the same structure as $F$.

We can show that  $\tsup\Delta{\bf H}(F)\leq 38\, {\bf I}_{y_1}(F)$ not just for this specific example, but more generally  for an arbitrary number of  tribes of arbitrary widths. To this end, we introduce a class of  monotone Boolean functions, the  \emph{$\delta$-tribes}  functions,  as follows: 
    
 \begin{definition} 
A function $F\in \mathcal{M}_{n+1}$ is a \emph{$\delta$-tribes} function if 
  \[
    F = \max\{\min T_1\dots, \min T_{k-1}, \min S_1, \dots, \min S_{n-u}, \min S_{n+1}\}, 
  \]
where $\{T_{\ell}: \ell\in [k]\}$ is a set of consecutive, disjoint tribes covering the first $n$ variables,  $u$ is the number of elements in the last tribe $T_k$, and $S_j= \{x_j\}\cup T_k$.
\end{definition}
   
Note that when splitting a $\delta$-tribes function $F$ by the variable $x_{n+1}$ then $F^+=f= \max\{\min T_1\dots, \min T_{k-1}, \min T_k\}$ and $F^-=f-2\delta_y$, 
where the $i$-th element of $y$ is $1$ if $i\in T_k$ and is $-1$ otherwise (see Lemma~\ref{lem:f-2deltay}), justifying the name $\delta$-tribes.

 \begin{theorem}
 The FEI conjecture holds for $\delta$-tribes functions.
 \end{theorem}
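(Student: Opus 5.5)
We will apply Ansatz~\ref{ansatz4}, using Corollary~\ref{coro2:Ansatz1-2} to replace hypothesis (ii) by the stronger inequality~\eqref{eq:CSDSfalse}. The stopping tree is the one suggested by Figure~\ref{fig:counterexample3}(b). Write $T_k=\{y_1,\dots,y_u\}$, and let $H=\max\{\min T_1,\dots,\min T_{k-1}\}$ and $G=\max\{x_1,\dots,x_{n-u},x_{n+1}\}$, both disjoint tribes functions on disjoint sets of variables. We split $F$ successively by $y_1, y_2,\dots, y_u$, and only along the $+$ branch.
\begin{itemize}
\item Setting $y_j=-1$ kills every $S$-tribe, so the $-$ child is $H$ (a stopping node).
\item Setting $y_j=+1$ gives a function $F_j^+$ of the same shape: a $\delta$-tribes function whose special tribe is $T_k\setminus\{y_1,\dots,y_j\}$.
\item After all of $T_k$ is set to $+1$ we reach $\max\{H,G\}$, again a disjoint tribes function.
\end{itemize}
Every stopping node therefore carries a disjoint tribes function. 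For these the FEI conjecture holds with a universal constant by the O'Donnell--Tan composition theorem \cite{OT13}, which gives hypothesis (i) once $K$ is taken at least that constant.

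The main work is hypothesis (ii) at each branching node. Every such node has the same form, so it suffices to handle a generic $\delta$-tribes function split by a variable $y_1$ of the special tribe, with $\widetilde{T}=T_k\setminus\{y_1\}$. Proposition~\ref{prop:monotone_product} makes the structure explicit:
\[
\beta(F^+)=\beta(H)\,\beta\big(\max\{\min\widetilde T\wedge G\}\big)=\beta(H)\,\beta\big(\min\{\min\widetilde T, G\}\big),\qquad \beta(F^-)=\beta(H).
\]
Hence
\[
F^+-F^-=-2\beta(H)\big(\beta(g)-1\big)=2\beta(H)\,\tfrac{1+g}{2},\qquad g=\min\{\min\widetilde T,G\}.
\]
Here $\beta(H)$ depends on the $T_1,\dots,T_{k-1}$ variables, and $g$ depends on $\widetilde T$ together with $x_{n+1}$ and on $G$'s variables. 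The issue is that $G$ has variables $x_1,\dots,x_{n-u}$, which overlap with $H$'s variables. To keep things in product form, I would write the difference as $\beta(H)\cdot\delta_{\widetilde T=e}\cdot(1+G)$ and use Proposition~\ref{prop:monotone_product} again for $\beta(\max\{H,G\})$, so that every Fourier coefficient becomes explicit through \eqref{eq:AND}, \eqref{eq:OR} and Remark~\ref{remark2}.

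Next we bound both sides of~\eqref{eq:CSDSfalse} using $|a^2-b^2|=|a-b|\,|a+b|$. With $D=F^+-F^-$ and $A=F^++F^-=2F^--\dots$, we have $\sum_S|\widehat D(S)|\,|\widehat A(S)|$ on the left and $\|D\|_2^2=4{\bf I}_{y_1}(F)$ on the right. The left side is controlled by Cauchy--Schwarz if $\widehat A$ restricted to the support of $\widehat D$ is at most a constant times $\widehat D$ coefficientwise, or pointwise in an averaged sense. Concretely, I would show that for every $S$,
\[
|\widehat{A}(S)|\le C\,|\widehat D(S)|+(\text{terms whose total contribution is }\lesssim \|D\|_2^2).
\]
The plan for this is to factor both $A$ and $D$ into products of tribe factors, using Remark~\ref{remark2}. $D$ is supported by the factor $\delta_{\widetilde T=e}$ times factors from $H$ and $G$, and $A$ shares the $H$-factor. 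Summing geometric series over the widths should then produce an absolute constant (the paper's value is $38$) independent of $k$, the widths, and $u$.

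The main obstacle is exactly this coefficientwise comparison. The lesson of Proposition~\ref{cor:CSDSfalse} is that $\|\widehat F\|_1$ can blow up, so any bound that crudely uses $\sum_S|\widehat A(S)|$ will fail. The estimate must exploit the fact that $\widehat D$ carries the factor $2^{-|\widetilde T|}$ from the delta and the OR structure of $G$, which matches the decay of $\widehat A$ on the same sets. If the uniform constant proves elusive, the first fallback is to use the weaker inequality~\eqref{final-inequality} with the exact $\psi$ only on the sets $S$ where the crude bound fails. Finally, by \eqref{eq:stoppingTreeInfluenceFourier} and Ansatz~\ref{ansatz4}, ${\bf H}(F)\le K\,{\bf I}(F)$ with $K=\max\{38\cdot\text{const},K_{\rm tribes}\}$.
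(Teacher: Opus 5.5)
You set up exactly the paper's stopping tree, and your formula for $D=F^+-F^-$ is right once you note that $H\le G$ pointwise, so $\max\{H,G\}=G$. (Contrary to your opening sentence, $H$ and $G$ share the variables $x_1,\dots,x_{n-u}$.) This gives $D=\mathbf{1}_{\{\widetilde T=e\}}(G-H)$, i.e.\ $F^+=H+2^{-(u-1)}(G-H)P_1$ with $P_1=\prod_{j=2}^{u}(1+y_j)$.

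The content of the theorem, however, is the bound \eqref{eq:CSDSfalse} at the branching nodes with a constant independent of $k$, the widths and $u$. You do not prove it: you give a plan and a fallback, and the concrete mechanism you propose cannot work. For nonempty $S\subset[n-u]$,
\[
\widehat D(S)=2^{-(u-1)}\big(\widehat G(S)-\widehat H(S)\big),\qquad \widehat A(S)=2\widehat H(S)+\widehat D(S),
\]
so $|\widehat A(S)|/|\widehat D(S)|$ is of order $2^{u}$. This already happens when $H$ is a single tribe of width $w$, where $|\widehat G(S)|=2^{-w}$ and $\widehat H(S)=2^{1-w}$. Hence no coefficientwise bound $|\widehat A(S)|\le C|\widehat D(S)|$ with $C$ independent of $u$ holds. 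Plain Cauchy--Schwarz against $\sum_{S\neq\emptyset}\widehat H(S)^2$ loses a factor of order $2^{u/2}$. Your ``remainder terms whose total contribution is $\lesssim\|D\|_2^2$'' contribute exactly $2\sum_S|\widehat D(S)|\,|\widehat H(S)|$, and bounding that is the whole difficulty. The explicit Fourier formulas, by themselves, do not yield it.

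The missing idea is to use monotonicity to bound the \emph{right-hand side} from below linearly in $\widehat D(\emptyset)$. Since $D\in\{0,2\}$, we have $\|D\|_2^2=2\widehat D(\emptyset)$, so
\[
{\bf I}_{y_1}(F)=\tfrac12\widehat D(\emptyset)=2^{-u}\big(\widehat G(\emptyset)-\widehat H(\emptyset)\big)\ge 2^{-(u+1)}\big(1-\widehat H(\emptyset)\big).
\]
The last step uses $\widehat G(\emptyset)=1-2^{-(n-u)}$ and $2^{-(n-u)}\le\tfrac12(1-\widehat H(\emptyset))$, which follows from \eqref{eq:DisjointTribesEmptysetCoeff}. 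This lower bound carries the same factor $2^{-u}$ as $\widehat D(S)$ on $S\subset[n-u]$. Combining it with $\sum_{S\neq\emptyset}\widehat H(S)^2\le 2(1-\widehat H(\emptyset))$ and $|\widehat G(S)|=2^{-(n-u)}$ gives
\[
\sum_{\emptyset\neq S\subset[n-u]}|\widehat D(S)|\,|\widehat H(S)|\le 2^{-(u-1)}\sum_{S\neq\emptyset}\big(|\widehat G(S)|\,|\widehat H(S)|+\widehat H(S)^2\big)\lesssim 2^{-u}\big(1-\widehat H(\emptyset)\big)\lesssim {\bf I}_{y_1}(F).
\]
The remaining sets are easy, but you should split them off explicitly:
\begin{itemize}
\item If $S$ meets $\widetilde T\cup\{n+1\}$, then $\widehat{F^-}(S)=0$, so the summand equals $\widehat D(S)^2$, and these sum to at most $4\,{\bf I}_{y_1}(F)$.
\item $S=\emptyset$ is handled by Lemma~\ref{simple_ineq}.
\end{itemize}
With these three cases, hypothesis (ii) of Ansatz~\ref{ansatz4} holds with an absolute constant. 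Without them your proposal leaves it unproved, and falling back to \eqref{final-inequality} does not by itself supply this estimate.
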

 
    \begin{figure}
    \hskip .5in
   \begin{tikzpicture}
\Tree
[.$F,x_{n+1}$    
    [.$f-2\delta_y$ ]
    [.$f$ ]
    ]
\end{tikzpicture}
 \hskip 1.5in 
\begin{tikzpicture}
\Tree
[.$F,y_1$    
    [.$H$ ]
    [.$F_1^+,y_2$ 
    \edge[]; {$H$}
    \edge[]; [.$F_2^+,y_3$
                    \edge[]; {$H$}
                    \edge[]; {$G$}
                    ]
         ]
    ]

\end{tikzpicture}
\caption{\footnotesize{(a) Split w.r.t to $x_{n+1}$. $\quad\quad\quad\quad$ (b) Stopping tree split w.r.t  $T_k=\{y_1,y_2,y_3\}$.}} \label{fig:counterexample3}
 \end{figure}
 
\begin{proof}
We will show that any $\delta$-tribes function $F$ can be represented by a stopping binary tree such that at the stopping nodes we have disjoint tribes functions and at the branching nodes we have $\delta$-tribes functions for whom inequality~\eqref{eq:CSDSfalse} holds.  As the FEI conjecture holds for disjoint tribes, by Ansatz~\ref{ansatz4}  (more precisely by Corollary~\ref{coro2:Ansatz1-2}) the FEI conjecture will hold for the $\delta$-tribes functions.

We label the elements of $T_k$ as $\{ y_1, \dots, y_u\}$ and we start building the tree by setting  $F^+_0 : =F$ and $S^0_j=S_j$ for each $j$. Now split by $y_1$, the first variable in $T_k$, and apply   Proposition~\ref{prop:monotoneFgivenfandg}(ii) in the Appendix, to obtain two new functions 
 \begin{eqnarray*}
 F^+_1 & = & \max\left \{\min T_1, \dots, \min T_{k-1}, \min S_1^1, \dots,  \min S_{n-u}^1, \min S_{n +1}^1\right \} ,\\
 F^-_1 & = & \max\{ \min T_1, \dots, \min T_{k-1}\} := H.
 \end{eqnarray*}
where $S^1_j=S^0_j\setminus \{y_1\}$. Note that $F^-_1=H$ is a disjoint  tribes function and that $F^+_1$ is a $\delta$-tribes function, except that the $S^1_j$ tribes have one fewer element (the splitting variable $y_1$ has been removed) than the $S^0_j$ tribes. Continuing this process, after $u$ steps we end up with $S^{u}_j=S^{u-1}_j \setminus\{y_u\}= S^0_j\setminus T_k= \{x_j\}$, to get  the split functions $F^{-}_u=H$ and 
\begin{eqnarray*}
 F^{+}_u & = & \max\left \{\min T_1,\dots, \min T_{k-1}, x_1 , \dots,  x_{n-u}, x_{n+1}\right \} \\
   & = &  \max\left \{x_1, \dots, x_{n-u}, x_{n+1}\right \} := G.
 \end{eqnarray*}
The last inequality follows because the tribes $T_{\ell}$ for $\ell\in [k-1]$ are redundant. At this point,  both $F^{\pm}_u$ are disjoint tribes and we stop the process.  Figure~\ref{fig:counterexample3}(b) illustrates the case $u=3$. As $G$ and $H$ are disjoint tribes, it remains to show that  inequality~\eqref{eq:CSDSfalse} holds when we split a $\delta$-tribes function by one of the variables in $T_k$.  

Without loss of generality, we will show that if we split $F$ by the first variable in $T_k$, then there is $K>0$ independent of the parameters $k$, $w$, and $u$, such that \eqref{eq:CSDSfalse} holds:
\[ 
\sum_{S\subset [n+1]^*}   \big | \widehat{F^+}(S)^2-\widehat{F^-}(S)^2\big | \leq  {K }\,{\bf I}_{n-u+1}(F).
\] 
Here, $ [n+1]^* =  [n+1]\setminus \{n-u+1\}$, where $n-u+1$ is the index of the first variable in $T_k$.

 Starting at $F$ and working downwards, noting that $F^-_j = H$ for all $j \in [u]$\footnote{Technically each of these functions is defined in a different Hamming cube, but since they  only depend on the variables $x_{\ell}$ with $\ell\in [n-u]$, we can view them as Boolean functions in any cube $\{-1,1\}^N$ for $N\geq n-u$.}, and repeatedly using formula~\eqref{eq:fsplitxm},   we get
 \begin{eqnarray*}
 F^+_0 & = &  \frac{F^+_1 + H}{2} + y_1 \frac{F^+_1 - H}{2} \\
 \vdots & & \hskip .65in \vdots \\
 F^+_{u-1} & = &  \frac{F^+_{u} +H}{2} + y_u \frac{F^+_{u} - H}{2} 
 \end{eqnarray*}
Recall that $F^+_u = G = \max\left \{x_1, \dots, x_{n-u}, x_{n+1}\right \}$, therefore
\[F^+_{u-1} = \frac{G+H}{2} + y_u\frac{G-H}{2} = H+ \frac12 (G-H)(1+y_u).\]
 Substituting the last equation into the corresponding one for $F^+_{u-2}$, we get 
 \begin{eqnarray*}
 F^+_{u-2}  & = & H+\frac14 (G-H)(1+y_{u-1})(1+y_u)
 \end{eqnarray*}
 The pattern is now clear, as can be verified by induction, for all $i\in [u]$:
\[
  F^+_{u-i} =  H + \frac{1}{2^i} (G-H)\prod_{j=u-i+1}^u(1+y_j)  := H + \frac{1}{2^i} (G-H) P_{u-i}.
\]
The function $F^- = H$ depends only on the variables $\{x_1,\dots, x_{n-u}\}$, thus if $S$ contains any of the labels of the remaining variables $\{x_{n-u+2}, \dots, x_{n+1}\}$, we have $\widehat{F^-}(S) =0$. Therefore,
  \begin{equation}\label{case1}
   \sum_{\substack{S\in [n+1]^* \\  S\cap S_1\neq \emptyset}}  \big | \widehat{F^+}(S)^2-\widehat{F^-}(S)^2\big | 
= \sum_{\substack{S\in [n+1]^* \\  S\cap S_1\neq \emptyset}}  \big | \widehat{F^+}(S)-\widehat{F^-}(S)\big |^2 \leq 4  \,{\bf I}_{n-u+1}(F),
   \end{equation}
  where  $S_1=\{n-u+2, \dots,  n+1\}$.

By Lemma~\ref{simple_ineq}, we have:
  \begin{equation}\label{case2}
  \big | \widehat{F^+}(\emptyset)^2-\widehat{F^-}(\emptyset)^2\big |  \leq 4  \,{\bf I}_{n-u+1}(F).
   \end{equation}

We are left to consider the sum over non-empty sets $S\subset [n+1]^*$ where $S\cap S_1=\emptyset$. This is the same as considering the sum over non-empty $S\subset [n-u]$. 
For the remainder of the proof, sums will be over non-empty  $S\subset [n-u]$, unless stated otherwise. We have
\begin{eqnarray*}
 F^+ & = & F^+_1  = H + \frac{1}{2^{u-1}} (G-H)P_1,\\
 F^-& = & H \; = \; \max\{\min T_1,\dots, \min T_{k-1}\},\\
 G & = & \max\{x_1,\dots, x_{n-u},x_{n+1}\}.
 \end{eqnarray*}
As $P_1$ depends only on the variables $\{y_2,\dots, y_u\}$,  it is independent of $x_i$ for $i \in [n-u]$. Furthermore, $\widehat{P_1}(\emptyset)= 1$, so it follows that for $S \subset [n-u]$:
 \[
\widehat{F}^+(S)= \widehat{H}(S) + \frac{1}{2^{u-1}}\Big (\widehat{G}(S) - \widehat{H}(S)\Big ).
\]

   Since $F$ is monotone and $\widehat{G}(\emptyset )= 1-\frac{1}{2^{n-u}}$, by Example~\ref{eg:AND}, we have
   \begin{eqnarray}\label{eq:Iws+1}
   {\bf I}_{n-u+1}(F) & = & \frac{\widehat{F}^+(\emptyset) -\widehat{F}^-(\emptyset) }{2} \, = \,\frac{ \widehat{G}(\emptyset) -\widehat{H}(\emptyset)}{2^{u}} 
             \, = \, \frac{1}{2^{u}} \Big ( 1-\frac{1}{2^{n-u}} - \widehat{H}(\emptyset) \Big ). 
   \end{eqnarray}

   We  have  formula~\eqref{eq:DisjointTribesEmptysetCoeff} for the Fourier coefficient of the empty set of a disjoint tribes, namely
     \[ 
\widehat{H}(\emptyset)        =  1- 2\prod_{\ell=1}^{k-1} \Big ( 1-\frac{1}{2^{w_{\ell}}}\Big ) ,
\]
where $w_{\ell}$ is $\#T_\ell$. As  $\frac{1}{2^{w}}\leq 1-\frac{1}{2^{w}}$ and $n-u=w_1+\dots + w_{k-1}\geq 1$, we get 
\begin{equation}\label{lb1}
 \frac{2}{2^{n-u}} \leq 1-\widehat{H}(\emptyset ),
\end{equation}
\begin{equation}\label{lb2}
\frac{1}{2^{u+1}} \Big ( 1 - \widehat{H}(\emptyset) \Big ) \leq {\bf I}_{n-u+1}(F).
\end{equation}

Next, we have $|a^2 - b^2| \leq |a-b|^2 + 2|(a-b)b|$ for any real numbers $a$ and $b$, thus
\[
  \sum \big | \widehat{F^+}(S)^2-\widehat{F^-}(S)^2\big | \leq 4\, {\bf I}_{n-u+1}(F)  +  
\sum \frac{2}{2^{u-1}}  \big | \widehat{G}(S)-\widehat{H}(S)  \big |  \big |\widehat{H}(S) \big |
\]
The triangle inequality implies that the second term on the right is bounded by:
\[
\sum \frac{2}{2^{u-1}}  \big | \widehat{G}(S) \big |  \big |\widehat{H}(S) \big |  
+   \sum \frac{2}{2^{u-1}}   \big |\widehat{H}(S) \big |^2.
\]
The second term here is easy to control as:
\[
\sum \big |\widehat{H}(S) \big |^2 = 1-\widehat{H}(\emptyset)^2 =(1+\widehat{H}(\emptyset)) (1-\widehat{H}(\emptyset)) \leq 2(1 - \widehat{H}(\emptyset))
\]
As $|\widehat{G}(S)| = \frac{1}{2^{n-u}}$ for non-empty $S$, and using~\eqref{lb1}, we have:
\begin{eqnarray*}
\sum 2 \big | \widehat{G}(S) \big |  \big |\widehat{H}(S) \big | 
& \leq & \sum \Big (\big | \widehat{G}(S) \big |^2 +  \big |\widehat{H}(S) \big |^2   \Big )
=  \frac{2^{n-u}-1}{2^{2(n-u)}} +\sum \big |\widehat{H}(S) \big |^2\\
&\leq&  \frac{1}{2^{(n-u)}} + 2(1 - \widehat{H}(\emptyset)) \leq \frac52 (1 - \widehat{H}(\emptyset))
\end{eqnarray*}

Combining these estimates, including~\eqref{lb2} we get:
\begin{equation}\label{case3}
  \sum_{S\subset [n-u]: S\neq\emptyset}  \big | \widehat{F^+}(S)^2-\widehat{F^-}(S)^2\big | \leq 30\, {\bf I}_{n-u+1}(F) 
\end{equation}

Collecting all cases in estimates~\eqref{case1}, \eqref{case2}, and~\eqref{case3}, we  conclude that 
\[ \sum_{S\subset [n+1]\setminus \{n-u+1\}}  \big | \widehat{F^+}(S)^2-\widehat{F^-}(S)^2\big |   \leq  K\, {\bf I}_{n-u+1}(f),\]
where $K=38$.

  \end{proof}

  \section{Some new classes of Boolean functions}\label{sec:nuevas_clases}\label{sec:NewClassesBooleanFunctions}
  
We say that a collection of functions are \emph{separated} if their sets of defining variables are pairwise disjoint, and that two functions $f$ and $g$ are \emph{semi-separated} if $f= pq$ and $g= pr$, where $p$, $q$, and $r$ are separated. If the functions  $p$, $q$, and $r$ all belong to some class of functions $\mathcal A$, then we say that $f$ and $g$ are semi-separated by class $\mathcal A$. A Boolean function $f$ is said to be  \emph{semi-separated} if there is a splitting variable such that  $f^{\pm}$ are semi-separated.

It will be convenient to consider the set of functions $\mathcal{G}_n$ on the Hamming cube that take values in $\{-1,0,1\}$. This includes the Boolean functions, their derivatives, delta functions,  and the functions $\beta(f)$ appearing in Proposition~\ref{prop:monotone_product}. The key property of this class is that $g^2 = |g|$ for any function $g\in \mathcal{G}_n$. The next proposition, in combination with Corollary~\ref{coro2:Ansatz1-2}, will be useful for constructing classes of functions that satisfy the FEI conjecture, 
 \begin{proposition}\label{prop:control_semi_sep}
If  $g_1$ and $g_2$  are semi-separated by  $\mathcal{G}_{n}$, then
\[
\sum_{S\subset [n]} \big |\widehat{g_2}(S)^2-\widehat{g_1}(S)^2\big | 
\leq 4 \sum_{S\subset [n]}\big (\widehat{g_2}(S)-\widehat{g_1}(S)\big )^2.
\]
 \end{proposition}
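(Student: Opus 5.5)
The plan is to exploit the product structure: both sides of the inequality factor through $p$, which reduces the statement to an elementary inequality in four real numbers. Write $g_1=pq$ and $g_2=pr$, with $p,q,r\in\mathcal{G}_n$ depending on pairwise disjoint variable sets $V_p,V_q,V_r$. By Remark~\ref{remark2}, a set $S$ with a nonzero coefficient for $g_1$ or $g_2$ must have the form $S=A\cup B\cup C$ with $A\subset V_p$, $B\subset V_q$, $C\subset V_r$. For such $S$,
\[
\widehat{g_1}(S)=\widehat p(A)\,\widehat q(B)\,\mathbbm{1}_{C=\emptyset},\qquad \widehat{g_2}(S)=\widehat p(A)\,\widehat r(C)\,\mathbbm{1}_{B=\emptyset}.
\]

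\textbf{The left-hand side.} Split the sum according to whether $B$ and $C$ are empty.
\begin{itemize}
\item If $B\neq\emptyset$ and $C\neq\emptyset$, both coefficients vanish.
\item If $B\neq\emptyset$ and $C=\emptyset$, the term is $\widehat p(A)^2\widehat q(B)^2$; symmetrically, $C\neq\emptyset$ and $B=\emptyset$ gives $\widehat p(A)^2\widehat r(C)^2$.
\item If $B=C=\emptyset$, the term is $\widehat p(A)^2\,\big|\widehat q(\emptyset)^2-\widehat r(\emptyset)^2\big|$.
\end{itemize}
Summing with Parseval and using $h^2=|h|$ for $h\in\mathcal{G}_n$, set $a={\bf E}|q|$, $b={\bf E}|r|$, $s=\widehat q(\emptyset)$, $t=\widehat r(\emptyset)$. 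This gives
\[
\mathrm{LHS}={\bf E}|p|\,\big(a-s^2+b-t^2+|s^2-t^2|\big).
\]

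\textbf{The right-hand side.} By Parseval it equals $4{\bf E}[p^2(r-q)^2]$. Since $p$ is independent of $q$ and $r$, and $q$ is independent of $r$,
\[
\mathrm{RHS}=4\,{\bf E}|p|\,(a+b-2st).
\]

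\textbf{Reduction to a scalar inequality.} After cancelling ${\bf E}|p|$ (trivial if it is $0$), and assuming by symmetry $s^2\ge t^2$, it remains to prove
\[
3a+3b-8st+2t^2\ge 0,
\]
using only the constraints $|s|\le a\le 1$ and $|t|\le b\le 1$ (values in $\{-1,0,1\}$ give $|{\bf E}q|\le{\bf E}|q|$). Put $x=|s|$ and $y=|t|$, so $y\le x\le 1$. It suffices to show
\[
f(x,y)=3x+3y-8xy+2y^2\ge 0 .
\]
For fixed $y$, $f$ is linear in $x$, so it is enough to check the two endpoints:
\[
f(1,y)=(1-y)(3-2y)\ge 0,\qquad f(y,y)=6y(1-y)\ge 0 .
\]

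The only delicate point is this last scalar inequality. The constant $4$ is sharp: equality holds at $s=t=1$, or as $y\to1$. Hence the bound needs both $a\ge|s|$ and the case split $s^2\ge t^2$, rather than a cruder estimate such as $|s^2-t^2|\le 2|s-t|$. If the endpoint check turns out to be fragile, I would fall back on expanding directly around $x=y=1$.
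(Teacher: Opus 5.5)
Your proof is correct and is essentially the paper's argument. Both use the same split of $S$ according to whether it meets the variables of $q$ or $r$, Parseval with $h^2=|h|$, independence, and ${\bf E}|q|\ge|{\bf E}q|$. The difference is that you compute both sides exactly and finish with one scalar inequality. The paper instead bounds each of the two cases where one coefficient vanishes separately by the whole right-hand sum, and handles $S\subset\Lambda_p$ via $|a^2-b^2|\le 2(a+b-2ab)$.

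Your closing sharpness remark is wrong, however. At $s=t=1$ both sides are simply $0$, which says nothing about the constant. Your own identity shows that the left-hand side divided by $\sum_S(\widehat{g_2}(S)-\widehat{g_1}(S))^2$ equals $(a+b-2t^2)/(a+b-2st)$, and this is at most $3$: the same endpoint check on $x+y-3xy+y^2$ gives $2y(1-y)$ and $(1-y)^2$. The value $3$ is approached by $p\equiv q\equiv 1$ and a $\{0,1\}$-valued $r$ with ${\bf E}r\to 1$, where the ratio is $1+2\,{\bf E}r$. So the optimal constant here is $3$, not $4$.
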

 \begin{proof}
By hypothesis we have $g_1 = pr$ and $g_2=pq$ for separated functions $p,q,r$ in $\mathcal{G}_n$. Let $\Lambda_p, \Lambda_q, \Lambda_r \subset [n]$ be the disjoint, index sets of the variables that define these functions.  

First note that if $S\setminus (\Lambda_p\cup \Lambda_q\cup \Lambda_r)\neq \emptyset$ then $\widehat{g_2}(S)=\widehat{g_1}(S)=0$. So, the only sets that contribute to the sum are  $S\subset \Lambda_p\cup \Lambda_q\cup \Lambda_r$. There are three  cases to consider.

\noindent{\bf Case 1 ($S\cap \Lambda_q\neq \emptyset$).} Then
\[ \widehat{g_1}(S)=\mathbb{E}(p r\chi_S)=\mathbb{E}(p r\chi_{S\setminus\Lambda_q})\mathbb{E}(\chi_{S\cap\Lambda_q})=0.\]
As a result,
\[\sum_{S\in [n]: S\cap \Lambda_q\neq \emptyset} \big  |\widehat{g_2}(S)^2-\widehat{g_1}(S)^2\big | 
\leq  \sum_{S\subset [n]} \big (\widehat{g_2}(S)-\widehat{g_1}(S)\big )^2.
\]

\noindent{\bf Case 2 ($S\cap \Lambda_r\neq \emptyset$).} This is the same as Case 1.

\noindent{\bf Case 3 ($S\subset  \Lambda_p$).} 
In this case, 
\begin{eqnarray*}
 \widehat{g_2}(S) & = & \widehat{pq}(S) \, = \, {\bf E}(pq\chi_S) \, = \,{\bf E}(p\chi_S){\bf E}(q) \, = \, \widehat{p}(S) \, \overline{q}, \\
 \widehat{g_1}(S) & = &\widehat{pr}(S) \, = \, {\bf E}(pr\chi_S) \, = \,{\bf E}(p\chi_S){\bf E}(r) \, = \, \widehat{p}(S) \, \overline{r},
\end{eqnarray*}
where $\overline{r}={\bf E}(r)$. Using this and Plancherel,
\[
\sum_{S\subset [n]:S\subset  \Lambda_p }\big |\widehat{g_2}(S)^2-\widehat{g_1}(S)^2\big  | =	 |\overline{q}^2-\overline{r}^2|  \sum_{S\subset [n]:S\subset  \Lambda_p }\widehat{p}(S)^2 \leq {\bf E}(p^2) \, |\overline{q}^2-\overline{r}^2|.\\
\]
On the other hand, using Plancherel again, we have that 
\begin{eqnarray*}
\sum_{S\subset [n]} \big (\widehat{g_2}(S)-\widehat{g_1}(S)\big  )^2 & = &  {\bf E}\big ((g_2-g_1)^2\big )\, = \, {\bf E}\big (p^2(q-r)^2\big ) \\ 
& = & {\bf E}(p^2) \, {\bf E}\big ((q-r)^2\big)  \, = \,  {\bf E}(p^2) \, {\bf E}\big (q^2+r^2-2rq \big ). 
\end{eqnarray*}
As $q,r \in \mathcal{G}_n$,  $q^2=|q|$ and $r^2=|r|$, hence 
\begin{eqnarray*}
{\bf E}\big (q^2+r^2-2rq \big) &=& {\bf E}\big (|q|+|r|-2rq \big ) =  {\bf E}(|q|) +  {\bf E}(|r|) - 2 {\bf E}(q)  {\bf E}(r)\\
&\geq &  |{\bf E}(q)| + |{\bf E}(r)|- 2 |{\bf E}(q)| | {\bf E}(r)| =  |\overline{q}| +  |\overline{r}| - 2  |\overline{q}| \,| \overline{r}|
\end{eqnarray*}
Next, we observe that for any real numbers $0\leq a, b \leq1$,  we have  $|a^2-b^2|\leq 2 (a+b -2ab)$. Applying this with $a = |\overline{q}|\leq 1$ and $b = |\overline{r}|\leq 1$,  and combining the various estimates,we obtain:
\[
\sum_{S\subset [n]:S\subset  \Lambda_p }\big |\widehat{g_2}(S)^2-\widehat{g_1}(S)^2\big  |  \leq  2 \sum_{S\subset [n]} \big (\widehat{g_2}(S)-\widehat{g_1}(S)\big  )^2.
\]
Collecting Cases 1-3 we conclude that
\[
\sum_{S\subset [n]} \big |\widehat{g_2}(S)^2-\widehat{g_1}(S)^2\big | \leq  4 \sum_{S\subset [n]} \big (\widehat{g_2}(S)-\widehat{g_1}(S)\big  )^2.
\]
  \end{proof}

  \begin{definition} 
A Boolean function satisfies the \emph{semi-separation property} if there is a stopping binary tree where at every branching node the branching functions are semi-separated by Boolean functions, and at stopping nodes the functions satisfy the FEI conjecture. 
 \end{definition}
The following theorem is an immediate consequence of this definition, the preceding proposition and Corollary~\ref{coro2:Ansatz1-2}.
  \begin{theorem}  
Boolean functions that satisfy the semi-separation property satisfy the FEI conjecture.  
\end{theorem}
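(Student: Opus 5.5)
The plan is to invoke Ansatz~\ref{ansatz4} in the form given by Corollary~\ref{coro2:Ansatz1-2}. Let $f$ satisfy the semi-separation property, and let $\T$ be a stopping binary tree witnessing it. The stopping nodes already satisfy ${\bf H}(f^\rho)\leq K_0\,{\bf I}(f^\rho)$ for some constant $K_0$, which is hypothesis (i). So it remains to verify inequality~\eqref{eq:CSDSfalse} at every branching node $\rho\in\beta_{\T}$, with a constant independent of the dimension and of the node. At the end we take $K=\max\{K_0,16\}$.

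Fix a branching node $\rho$ with splitting variable $x_{i_\rho}$. By definition, $(f^\rho)^+=pq$ and $(f^\rho)^-=pr$, where $p,q,r$ are Boolean functions depending on pairwise disjoint sets of variables, none of which involve $x_{i_\rho}$. Since Boolean functions lie in $\mathcal{G}_{n}$ (they take values in $\{-1,1\}\subset\{-1,0,1\}$), the split functions $g_2=(f^\rho)^+$ and $g_1=(f^\rho)^-$ are semi-separated by $\mathcal{G}$. We apply Proposition~\ref{prop:control_semi_sep} on the cube indexed by the remaining variables, which gives
\[
\sum_{S}\big|\widehat{(f^\rho)^+}(S)^2-\widehat{(f^\rho)^-}(S)^2\big|\leq 4\sum_{S}\big(\widehat{(f^\rho)^+}(S)-\widehat{(f^\rho)^-}(S)\big)^2 .
\]
Combining this with~\eqref{eq:influence-fpm}, the right-hand side equals $16\,{\bf I}_{i_\rho}(f^\rho)$. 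Hence~\eqref{eq:CSDSfalse} holds at $\rho$ with constant $16$.

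By Corollary~\ref{coro2:Ansatz1-2}, this implies~\eqref{final-inequality}, and hence $\Delta{\bf H}(f^\rho)\leq 16\,{\bf I}_{i_\rho}(f^\rho)$, using the equivalence proved in Appendix~B. That is hypothesis (ii). Ansatz~\ref{ansatz4} with $K=\max\{K_0,16\}$ then yields ${\bf H}(f)\leq K\,{\bf I}(f)$.

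The only delicate point is bookkeeping, not analysis. First, the functions $p,q,r$ live on the $(n-1)$-dimensional cube with $x_{i_\rho}$ removed, and Proposition~\ref{prop:control_semi_sep} must be applied there. Second, the constant at the stopping nodes must be uniform. I will interpret ``satisfy the FEI conjecture'' in the definition as holding with a fixed universal constant, and then combine it with the universal constant $16$ from the branching nodes.
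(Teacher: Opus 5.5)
Your proposal is correct and is essentially the paper's argument. The paper calls the theorem an immediate consequence of the definition, Proposition~\ref{prop:control_semi_sep} (Boolean functions lie in $\mathcal{G}_n$) and Corollary~\ref{coro2:Ansatz1-2} fed into Ansatz~\ref{ansatz4}, and that is exactly your route. Your constant also checks out: \eqref{eq:CSDSfalse} with constant $16$ makes the left-hand side of \eqref{final-inequality} at most $4\,{\bf I}_{i_\rho}(f^\rho)$, the Appendix~B remainder adds at most another $4\,{\bf I}_{i_\rho}(f^\rho)$, and so $\Delta{\bf H}(f^\rho)\le 8\,{\bf I}_{i_\rho}(f^\rho)\le 16\,{\bf I}_{i_\rho}(f^\rho)$.
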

 
Non-trivial examples of Boolean functions satisfying  the semi-separation property can be constructed starting from the stopping nodes and building up using Lemma~\ref{lem:semi-separated} below. We present two different classes of examples and will use the convenience function defined by $ \gamma(r,q;t) := \frac{q+r}{2} +t\,\frac{q-r}{2}$.

  \begin{lemma}\label{lem:semi-separated}  
Assume that $p,q,r$ are separated functions in $\mathcal{B}_n$. Set 
 $$ F(x,x_{n+1}) = p(x) \,  \gamma \big (r(x), q(x);x_{n+1}\big ) .$$
Then $F$ and $\gamma$ belong to $\mathcal{B}_{n+1}$, $F^+=pq$, $F^-=pr$, and $F$ is separated by $p$ and $\gamma$.
\end{lemma}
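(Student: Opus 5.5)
We prove Lemma~\ref{lem:semi-separated} by checking each claim directly, starting from the convenience function $\gamma(r,q;t)=\frac{q+r}{2}+t\,\frac{q-r}{2}$.

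\textbf{Step 1: $\gamma$ is Boolean.} For fixed $x$ and $t=x_{n+1}\in\{-1,1\}$ we have
\[
\gamma(r(x),q(x);1)=q(x), \qquad \gamma(r(x),q(x);-1)=r(x).
\]
Since $q,r$ take values in $\{-1,1\}$, the function $(x,x_{n+1})\mapsto\gamma(r(x),q(x);x_{n+1})$ takes values in $\{-1,1\}$ and so lies in $\mathcal{B}_{n+1}$. This is just formula~\eqref{eq:fsplitxm} read backwards: $\gamma$ is the function whose splits by $x_{n+1}$ are $\gamma^+=q$ and $\gamma^-=r$.

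\textbf{Step 2: $F$ is Boolean and its splits.} $F$ is the product of the two $\pm1$-valued functions $p$ and $\gamma$, so $F\in\mathcal{B}_{n+1}$. Setting $x_{n+1}=\pm1$ gives
\[
F^+=p\,\gamma^+=pq, \qquad F^-=p\,\gamma^-=pr,
\]
because $p$ does not depend on $x_{n+1}$.

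\textbf{Step 3: separation of $p$ and $\gamma$.} Let $\Lambda_p,\Lambda_q,\Lambda_r\subset[n]$ be the pairwise disjoint sets of defining variables. The function $p$ depends only on $\Lambda_p$. The function $\gamma$ depends only on $\Lambda_q\cup\Lambda_r\cup\{n+1\}$, since it is built from $q$, $r$ and $x_{n+1}$. These two sets are disjoint, so $F=p\gamma$ is a product of separated functions.

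Consequently $F^\pm$ are semi-separated, and $F$ is itself a semi-separated function split by $x_{n+1}$. The lemma is a direct verification with no real obstacle. The only point requiring care is Step~3: the variable sets must be interpreted as defining sets, and $x_{n+1}$ must not lie in $\Lambda_p$, which holds because $p\in\mathcal{B}_n$ is viewed as a function on $\{-1,1\}^{n+1}$ independent of the last coordinate.
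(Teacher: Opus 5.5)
Your proposal is correct and is exactly the direct verification the paper leaves as a ``straightforward exercise'': evaluating $\gamma$ at $x_{n+1}=\pm1$ gives $q$ and $r$, so $\gamma$ and $F=p\gamma$ are Boolean with $F^+=pq$ and $F^-=pr$. The variable sets $\Lambda_p$ and $\Lambda_q\cup\Lambda_r\cup\{n+1\}$ are disjoint, which gives the separation of $p$ and $\gamma$.
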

The proof is a straightforward exercise. 

 \begin{example}\label{eg:ex1}  Starting with a Boolean function  $p$ that satisfies  the FEI conjecture, pairs of stopping nodes of the form $(-p, p)$ and  single stopping nodes of the form $p$, we use Lemma~\ref{lem:semi-separated}  and different variables (say $x_1, x_2,\dots$, necessarily disjoint from the variables in $p$) for each pair of nodes at all levels to go from the stopping nodes to the root. See Figure~\ref{fig:Paul-separation1}. 
\end{example}
In Example~\ref{eg:ex1}, the variables of $p$ will appear in the full binary tree at least as many times as  there are stopping nodes, and we can make this arbitrarily large. 
 
\begin{figure}
\begin{tikzpicture}
\Tree
[.$p\,\gamma \big (1,\gamma (x_1,1;x_2);x_3\big )$    
    [.$p$ ]
    [.$p\,\gamma (x_1,1;x_2)$ 
    \edge[]; [.$p\,x_1$
                    \edge[]; {$-p$}
                    \edge[]; {$p$}
                    ]
    \edge[]; {$p$}
         ]
    ]
\end{tikzpicture}
\hskip .5in
\begin{tikzpicture}
 \Tree
 [.$x_1\,\gamma\big (r_1,\gamma (r_2,r_3;x_2);x_3\big )$
    [.$x_1r_1$ ]
    [.$x_1\,\gamma(r_2,r_3;x_2)$
    \edge[]; [.$x_1r_2$
       		   \edge[]; {$-r_2$}
		   \edge[]; {$r_2$}
		   ]
   \edge[]; {$x_1r_3$}
       ]
   ]    	
 \end{tikzpicture}
\caption{\footnotesize{Trees illustrating Example~\ref{eg:ex1} (left)  and Example~\ref{eg:ex2} (right).}} \label{fig:Paul-separation1}
\end{figure}
 
 \begin{example}\label{eg:ex2}
Alternatively, we can  start with pairs of stopping nodes  of the form $(-r_i, r_i)$, and single functions of the form $x_1r_j$ where the functions $\{r_i\}$ are  disjointly supported and $x_1$ is a new variable.   To move upwards from the stopping nodes,  we use  $x_1$ for all stopping pairs $(-r_i,r_i)$, for every other node we will use different new variables at each node until we reach the root. Here we are assuming that the functions at the stopping nodes, $r_i$ and $x_1r_j$, all satisfy the FEI conjecture.
See Figure~\ref{fig:Paul-separation1}.
\end{example}

Next, we define a notion of separation for monotone functions that is closely related to the notion of semi-separated functions. We say that a monotone function $f\in \mathcal{B}_n$ is \emph{tribe separated by $x_m$} if the tribes that contain $x_m$ are disjoint from the tribes that do not contain $x_m$. Note that disjoint tribes functions are tribe separated by any of their variables.  

It is not hard to construct examples of monotone functions that are not disjoint tribes but that are tribe separated. For example,  $ f \in \mathcal{B}_{7}$ given by tribes 
 \[
T_1=\{x_1,x_2\}, \; T_2=\{x_2,x_3\}, \; T_3=\{x_4,x_5,x_7\}, \;T_4=\{x_5,x_6,x_7\}
\]
is tribe separated by $x_5$. 

\begin{definition}
A monotone Boolean function $f$ satisfies the \emph{tribe separation property}  if there is a stopping binary tree where at every branching node the function is tribe separated by the splitting variable, and at stopping nodes the functions satisfy the FEI conjecture. 
\end{definition}
 
 \begin{theorem}\label{thm:tribe_separated} 
Monotone Boolean functions that satisfy the tribe separation property satisfy the FEI conjecture.
\end{theorem}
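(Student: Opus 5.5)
By Ansatz~\ref{ansatz4} together with Corollary~\ref{coro2:Ansatz1-2}, it suffices to prove one local estimate. If a monotone $f$ is tribe separated by $x_m$, we want
\[
\sum_{S\subset[n]\setminus\{m\}}\big|\widehat{f^+}(S)^2-\widehat{f^-}(S)^2\big|\le K\,{\bf I}_m(f)
\]
with an absolute $K$. Granting this, hypothesis (ii) of Ansatz~\ref{ansatz4} holds at every branching node of the stopping tree, and hypothesis (i) holds at the stopping nodes by assumption. The strategy is to show that tribe separation by $x_m$ makes $f^+$ and $f^-$ semi-separated by $\mathcal{G}_n$ in the sense of Proposition~\ref{prop:control_semi_sep}. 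That proposition then gives the estimate with constant $4$ (that is, $K=16$ against ${\bf I}_m(f)$ by \eqref{eq:influence-fpm}).

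First we write $f$ using the structure. Let the tribes containing $x_m$ be $U_1,\dots,U_j$ and the others $T_1,\dots,T_k$. By hypothesis the variables in $\bigcup_i U_i$ and in $\bigcup_\ell T_\ell$ are disjoint. Let $g=\max\{\min U_i\}$ and $h=\max\{\min T_\ell\}$, so that $f=\max\{g,h\}$. Proposition~\ref{prop:monotone_product} gives $\beta(f)=\beta(g)\beta(h)$, i.e. $1-f=\tfrac12(1-g)(1-h)$. Splitting by $x_m$ does not affect $h$, so $\beta(f^\pm)=\beta(h)\,\beta(g^\pm)$. Here $g^+=\max\{\min(U_i\setminus\{x_m\})\}$, and $g^-\equiv-1$, so $\beta(g^-)\equiv 1$.

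The key observation is then
\[
\widehat{f^+}(S)-\widehat{f^-}(S)=-2\big(\widehat{\beta(f^+)}(S)-\widehat{\beta(f^-)}(S)\big),
\]
and for $S\neq\emptyset$ also $\widehat{f^\pm}(S)=-2\widehat{\beta(f^\pm)}(S)$. Set $p=\beta(h)$, $q=\beta(g^+)$ and $r\equiv 1$. These lie in $\mathcal{G}_n$ (values in $\{0,1\}$), are separated (a constant depends on no variables), and satisfy $\beta(f^+)=pq$, $\beta(f^-)=pr$. For $S\ne\emptyset$ the difference of squares for $f^\pm$ is exactly $4$ times the corresponding quantity for $\beta(f^\pm)$, and likewise for the squared differences. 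Proposition~\ref{prop:control_semi_sep} applied to $\beta(f^\pm)$ therefore controls $\sum_{S\neq\emptyset}|\widehat{f^+}(S)^2-\widehat{f^-}(S)^2|$ by $4\sum_S(\widehat{f^+}(S)-\widehat{f^-}(S))^2=16\,{\bf I}_m(f)$.

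The remaining term, $S=\emptyset$, is where the affine change $f\mapsto\beta(f)$ fails to commute with squaring. We bound it separately by Lemma~\ref{simple_ineq}, which gives $|\widehat{f^+}(\emptyset)^2-\widehat{f^-}(\emptyset)^2|\le 4{\bf I}_m(f)$, so $K=20$ suffices. The main point to check carefully is this handling of the constant term, together with verifying that the proof of Proposition~\ref{prop:control_semi_sep} indeed allows a constant factor $r$ (its case analysis uses only separation and $r^2=|r|$, both of which hold). Degenerate cases, such as $g^+$ being constant because some $U_i=\{x_m\}$, are covered the same way.
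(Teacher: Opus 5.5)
Your proposal is correct and is essentially the paper's own proof: tribe separation by $x_m$ makes $\beta(f^+)$ and $\beta(f^-)$ semi-separated by $\mathcal{G}$, with one of the three factors the constant $1$. Proposition~\ref{prop:control_semi_sep} then controls the nonempty $S$, and Lemma~\ref{simple_ineq} handles $S=\emptyset$; the only difference is bookkeeping — you track the factor $4$ from $f\mapsto\beta(f)$ and get $K=20$ where the paper states $K=8$, which is immaterial since any absolute constant suffices.
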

\begin{proof} 
By Corollary~\ref{coro2:Ansatz1-2}, it suffices to show that there is $K>0$ such that
\[
 \sum_{S\subset [n] \setminus\{m\}} \big |\widehat{f^+}(S)^2 - \widehat{f^-}(S)^2 \big | 
\leq K\, {\bf I}_m(f)
\]
at each branching node, where $x_m$ is the splitting variable. Without loss of generality we can assume that $m=n$ and, from Lemma~\ref{simple_ineq}, we can replace the sum by the sum over non-empty sets~$S$. 

For Proposition~\ref{prop:monotone_product}, we defined the operator $\beta(h) = \frac12 (1-h)$, which has some good properties with respect to monotone functions. Observe that $\widehat{\beta(h)}(S) =-\frac12 \widehat{h}(S)$ for non-empty $S$ and that $  {\bf I}_m(\beta(h)) = \frac14 {\bf I}_m(h) $. Therefore, setting $g^+ = \beta(f^+)$ and $g^- = \beta(f^-)$, and replacing the influence by its Fourier form, we see that the inequality above is equivalent to:
  \begin{equation}\label{eq:base_eq}
 \sum_{S\subset [n-1]: S\neq \emptyset} \big |\widehat{g^+}(S)^2 - \widehat{g^-}(S)^2 \big | 
\leq K\, \ \sum_{S\subset [n-1]} \big |\widehat{g^+}(S)-\widehat{g^-}(S)\big |^2.
  \end{equation}

By hypothesis, $f$ is tribe separated by $x_n$. This means that the tribes $T$ not containing $x_n$ are disjoint from the tribes $U$ containing $x_n$. When we split $f$ by $x_n$ we get
\[ 
f^- =\max\{ \min T\}, \quad\quad f^+=\max\{ \min T; \min \big (U\setminus\{x_n\}\big )\}.
\]
Setting $ h = \max\{ \min \big (U\setminus\{x_n\} \big)$, we have $f^+=\max\{f^-, h\}$ where, most importantly, the functions $f^-$ and $h$ are separated. From Proposition~\ref{prop:monotone_product},  
\[
g+ = \beta(f^+) = \beta(f^-) \beta(h) = g^- \beta(h),
\]
implying that $g-$ and $g+$ are separated by $\mathcal{G}_{n-1}$, with $p=g^-$, $q=1$ and $r=\beta(h)$. Applying Proposition~\ref{prop:control_semi_sep}
 yields~\eqref{eq:base_eq} and completes the proof with constant $K=8$.
 \end{proof}  
 
  \section{Appendix A: More on the structure of monotone functions}\label{sec:appendix}
 In this Appendix, we want to understand the general tribes structure of a monotone function $F$ given the general tribes structure of $F^{\pm}$ and viceversa.  
      In fact, we will completely describe in terms of general tribes the monotone functions $F\in \mathcal{B}_{n+1}$ given the general tribes decompositions  of $F^+=f$ and $F^-=g$ where both   $f,g \in  \mathcal{B}_n$  are monotone and $g\leq f$, according to Proposition~\ref{prop:monotonePaul}.  Afterwards we will  specialize to the particular case when $g=f-2\delta_y$, for any $y\in\{-1,1\}^n$ in the boundary of $f$,  that ensures $g$ is monotone (clearly $g\leq f$), as described in Lemma~\ref{lem:BMN}.

  \begin{proposition}\label{prop:monotoneFgivenfandg} Let $m\in [n+1]$.
  
  {\rm (i)}  Suppose  $f$ and $g$ are monotone Boolean functions 
   depending on the variables labeled by $[n+1]\setminus\{m\}$, with   general tribes representation 
  $f=\max\{\min{T_1},\dots, \min{T_k}\}$, and $g=\max\{\min{U_1},\dots, \min{U_j}\},$ such that $g\leq f$. Let $F\in \mathcal{B}_{n+1}$ be defined as
  \begin{equation}\label{eq:FgivenFpm}
  F=\max\left \{\min\{T_1\cup \{x_m\}\}, \dots, \min\{T_k\cup \{x_m\}\},\min U_1, \dots, \min U_j\right \}.
  \end{equation}
  If $F^{\pm}$ denote the split functions by  the variable $x_m$, then $F^+=f$ and $F^-=g$. 
  
  {\rm (ii)} Suppose $F\in \mathcal{B}_{n+1}$  is a monotone function with general tribes representation given by $F=\max\left\{ \min S_1, \dots , \min S_p\right \}$. If $F^{\pm}$ denote
  the split functions  by the variable $x_m$, then 
  $$F^+=\max\{\min{T_1},\dots, \min{T_p}\} \quad  \mbox{and} \quad  F^-=\max\{\min S_q: m\notin S_q\},$$ 
  where  $T_q= S_q\setminus\{x_m\}$  for each $q\in [p]$.
   \end{proposition}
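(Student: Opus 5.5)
Both parts reduce to evaluating a max of mins pointwise after fixing $x_m=\pm1$, using the elementary facts $\min(A\cup\{x_m\})=\min A$ when $x_m=1$ and $\min(A\cup\{x_m\})=-1$ when $x_m=-1$.

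\emph{Part (i).} Set $x_m=1$ in \eqref{eq:FgivenFpm}. Each tribe $T_\ell\cup\{x_m\}$ contributes $\min T_\ell$, and each $U_i$ contributes $\min U_i$. Hence
\[
F^+=\max\{f,g\}=f,
\]
where the last equality uses the hypothesis $g\leq f$. Alternatively, Lemma~\ref{lem:monotonef<g} shows that every $U_i$ contains some $T_\ell$, so the tribes $U_i$ are redundant in the sense discussed before Sperner's Lemma. Now set $x_m=-1$. Every tribe containing $x_m$ evaluates to $-1$, and the maximum of $-1$ with other values in $\{-1,1\}$ leaves those values unchanged. Thus $F^-=\max\{\min U_1,\dots,\min U_j\}=g$. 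One should also record that $f$ and $g$ do not depend on $x_m$, so their tribes avoid $m$. This guarantees that the unions $T_\ell\cup\{x_m\}$ are genuine new tribes and that the splitting only affects $x_m$. Proposition~\ref{prop:monotonePaul} confirms $F$ is monotone, although this is not needed for the identities.

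\emph{Part (ii).} Apply the same pointwise evaluation to $F=\max\{\min S_1,\dots,\min S_p\}$. With $x_m=1$, each $\min S_q$ becomes $\min(S_q\setminus\{x_m\})=\min T_q$; for tribes not containing $x_m$ we simply have $T_q=S_q$. With $x_m=-1$, each $\min S_q$ with $m\in S_q$ becomes $-1$ and drops out of the maximum, leaving $\max\{\min S_q: m\notin S_q\}$.

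There are two degenerate cases to address. The first is a tribe $S_q=\{x_m\}$, which gives $T_q=\emptyset$. With the convention that the min over the empty set is $1$, $F^+\equiv1$, which is consistent. The second is the case where no tribe avoids $m$, in which the max over an empty family should be read as $-1$. I expect the only real obstacle to be stating these conventions cleanly; the rest is direct substitution.
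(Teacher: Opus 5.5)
Your proposal is correct and follows essentially the paper's proof: substitute $x_m=\pm1$ into the max-of-mins. For $F^+$ the tribes $U_i$ are absorbed; you do this directly via $\max\{f,g\}=f$, while the paper uses Lemma~\ref{lem:monotonef<g}. For $F^-$ the tribes containing $x_m$ drop out. Your remarks on the empty-min and empty-max conventions are a harmless clarification that the paper leaves implicit.
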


  \begin{proof}
{\rm (i)}  Recall that $F^{\pm}(x_1,\dots,x_{m-1},x_{m+1},\dots,x_{n+1})=F(x^{(m\to \pm 1)})$. Suffices to observe that when $x_m=1$ then $\min\{T_{\ell}\cup \{x_m\}\}(x)=\min T_{\ell}(x)$, and therefore in this case
  \[ F^+= \max\{ \min T_1,\dots, \min T_k,\min U_1,\dots, \min U_j\}.\]
  But by Lemma~\ref{lem:monotonef<g}, for all $i\in [j]$ there is $\ell \in [k]$ such that $T_{\ell}\subset U_i$, which means that the tribes $U_i$ are redundant, and we conclude that when $x_m=1$ then
  $F^+=\max\{\min T_1,\dots, \min T_k\}$, and thus $F^+=f$.
  When $x_m=-1$ then $\min\{T_{\ell}\cup\{x_m\}\}(x)=-1$, and therefore  $F^-= \max\{\min U_1,\dots, \min U_j\}$, 
  thus $F^-=g$. This finishes the proof of {\rm (i)}. 
  
  {\rm (ii)} Let $A=\{q\in [p]: m\notin S_q\}$ (the labels of of the tribes in $F$ that do not contain $x_m$), then $[p]\setminus A=\{q\in [p]: m\in S_q\}$ (the labels of the tribes in $F$ that contain $x_m$).
  
  When $x_m=1$ then $\min S_q = \min (S_q\setminus\{x_m\})=\min T_q$ for all $q\in [p]$,  and when $x_m=-1$ then $\min S_q= -1$ for all $q\in [p]\setminus A$. Therefore 
  $$F^+=\max\left \{ \min T_q : q\in [p] \right \},\quad \quad F^-=\max \{ \min S_q: q\in A\}.$$
  This finishes the proof of {\rm (ii)}.
  \end{proof}
  
  Note that there is no conflict between  {\rm (ii)} and {\rm (i)} in Proposition~\ref{prop:monotoneFgivenfandg}. Indeed,  since $T_q=S_q$ for all $q\in A$, then
  $ f=F^+ = \max\left \{ \min \{T_q : q\in [p]\setminus A\}, \min \{S_q: q\in A\} \right \}$, and $ g= F^-= \max \{ S_q: q\in A\}$.
  Therefore, when we use~\eqref{eq:FgivenFpm} to reconstruct $F$, we get
  \[  \max\left\{   \min \{T_q\cup \{m\} : q\in [p]\setminus A\}, \min \{S_q: q\in A\} \right \} 
     =  \max\{ S_q: q\in [p]\} = F.\]

  We can now use Proposition~\ref{prop:monotoneFgivenfandg} to prove the following lemma.
   
   \begin{lemma}\label{lem:f-2deltay}
     Given   $f=\max\{\min T_1,\dots, \min T_k\} \in \mathcal{M}_n$, with disjoint  tribes $\{T_{\ell}: \ell \in [k]\}$.  Let $g=f-2\delta_y$, where $y$ is a boundary point of $f$, which without loss of generality  we will assume corresponds to the tribe $T_k$.
  Then 
  \[ g=\max\big \{\min \{T_m: m\in [k-1]\},  \min \{S_j: j\in [n]\setminus \Lambda_k\} \big \},\]
  where $S_j= T_k\cup\{x_j\}$ and $\Lambda_k$ are the labels of the variables in $T_k$.
  Furthermore, the  function $F\in \mathcal{B}_{n+1}$   given by
  \[ F= \max \big \{\min T_{\ell }: \ell \in [k-1]\}, \min \{S_j: j\in [n+1]\setminus \Lambda_k\} \big\}, \]
  is such that   when splitting by $x_{n+1}$, we have that $F^+=f$ and $F^-=g$.  
  \end{lemma}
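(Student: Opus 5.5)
The plan has two parts. First identify $g=f-2\delta_y$ as a general tribes function by comparing the sets where $g=1$. Then read off $F^\pm$ from Proposition~\ref{prop:monotoneFgivenfandg}.

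Let $\Lambda_\ell$ denote the labels of $T_\ell$. Since the tribes are disjoint and $y$ is the boundary point attached to $T_k$, $y$ has coordinates $1$ exactly on $\Lambda_k$ and $-1$ elsewhere. Indeed, by the Sperner correspondence the boundary points of $f$ are the points $x_{\Lambda_\ell}$.

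Let $\tilde g$ denote the right-hand side of the claimed formula for $g$. By Lemma~\ref{lem:BMN}, $g$ is monotone, so it suffices to check that $g(x)=1$ if and only if $\tilde g(x)=1$. We know $g(x)=1$ if and only if $f(x)=1$ and $x\neq y$. There are two cases.

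\emph{Case $\min T_\ell(x)=1$ for some $\ell\le k-1$.} Then $x\neq y$, because $y$ is $-1$ on $\Lambda_\ell\neq\emptyset$. Also $\tilde g(x)=1$, since the tribe $T_\ell$ is kept in $\tilde g$.

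\emph{Case $x$ is $1$ only on $T_k$ among the tribes.} Here $f(x)=1$ means $x\ge y$. Then $x\neq y$ holds exactly when $x_j=1$ for some $j\notin\Lambda_k$, which is exactly $\min S_j(x)=1$ for that $j$.

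The converse is the same bookkeeping run backwards. Each $S_j$ with $j\notin\Lambda_k$ forces $x\ge y$ and $x\neq y$. Each $T_\ell$ with $\ell<k$ forces both $f(x)=1$ and $x\neq y$. This gives $g=\tilde g$.

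For the second claim, split $F$ by $x_{n+1}$ using Proposition~\ref{prop:monotoneFgivenfandg}(ii). The tribes of $F$ are the $T_\ell$ for $\ell<k$, the $S_j$ for $j\in[n]\setminus\Lambda_k$, and $S_{n+1}=T_k\cup\{x_{n+1}\}$. The only tribe containing $x_{n+1}$ is $S_{n+1}$.

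So $F^-$ is the maximum over the tribes $T_\ell$ ($\ell<k$) and $S_j$ ($j\le n$), which equals $g$ by the first part.

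For $F^+$, the tribe $S_{n+1}$ becomes $T_k$, giving
\[F^+=\max\{\min T_1,\dots,\min T_{k-1},\min T_k,\min S_j\ (j\in[n]\setminus\Lambda_k)\}.\]
Each $S_j$ contains $T_k$, so it is redundant, as in the discussion before Sperner's Lemma. Hence $F^+=f$. Alternatively, one can apply part (i) with $g\le f$, using Lemma~\ref{lem:monotonef<g}: every $S_j$ contains $T_k$ and every $T_\ell$ contains itself.

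The only delicate step is the case analysis for $g=\tilde g$. One must check that deleting the single point $y$ is captured exactly by the tribes $S_j$. This relies on disjointness: the other tribes $T_\ell$ are never all $1$ at $y$. Without disjointness the formula could fail, since some $T_\ell$ could be contained in $\Lambda_k$.
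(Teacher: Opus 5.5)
Your proof is correct and is essentially the paper's argument. The paper obtains the tribes of $g$ by deleting the minimal set $\Lambda_k$ from the Sperner family of $f$ and adding the sets $\Lambda_k\cup\{j\}$, $j\notin\Lambda_k$; your pointwise case check verifies exactly this, explicitly where the paper leaves it implicit. For $F$, the paper applies Proposition~\ref{prop:monotoneFgivenfandg}(i) in the constructive direction and discards the redundant tribes $T_\ell\cup\{x_{n+1}\}$, $\ell<k$, while you split the given $F$ with part (ii) and discard the redundant $S_j$ in $F^+$, which is the same bookkeeping run in the opposite direction.
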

 In the proof of Lemma~\ref{lem:BMN}, we showed that for any $y$ in the boundary of $f$,  the function $g$ is  monotone,  non-constant, and $g\leq f$, hence $g$ must have a representation as a general tribes function by Sperner's Lemma. Lemma~\ref{lem:f-2deltay} provides such representation for $g$ and for $F$. 
\begin{proof}
Comparing to Sperner's Lemma, what we are doing here to go from $f$ to $g=f-2\delta_y$ is removing the minimal set $\Lambda_y$  determined by the boundary point $y$   from  $\mathcal{S}_f$, the Sperner family associated to $f$. 
 The set $\Lambda_y=\{i\in [n]: y_i=1\}$, is clearly a minimal set for $f$ but not for $g$, as $g(y)=-1$.  
 To get a tribes representation for $g$, which may  not be minimal, we  have to  add to $\mathcal{S}_f\setminus \{\Lambda_y\}$ the  sets  $\Lambda^j_y=\Lambda_y \cup \{j\}$ for each $j\notin \Lambda_y$.   Since we assumed $\Lambda_y=\Lambda_k$, then $\Lambda_y^j=\Lambda_k\cup\{j\}$, and we conclude that
 $g=\max\big \{\min \{T_m: m\in [k-1]\},  \min \{S_j: j\in [n]\setminus \Lambda_k\} \big \}$. 
 
    We can now apply  Proposition~\ref{prop:monotoneFgivenfandg} to $f$ and $g$ to conclude that 
   \begin{eqnarray*}
   F & = & \max\left\{ \min\{T_{\ell}\cup\{x_{n+1}\}: \ell\in [k]\}, \min \{T_{\ell}:\ell\in [k-1]\}, \min \{S_j: j\in [n]\setminus \Lambda_k\}\right \} \\
   &= & \max\big \{ \min\{ T_{\ell}:\ell\in [k-1]\}, \min \{S_j: j\in [n+1]\setminus \Lambda_k\}.
   \end{eqnarray*}
   The last equality,  because all the tribes $T_{\ell}\cup\{x_{n+1}\}$ are redundant for $\ell \in [k-1]$.
    \end{proof}

\section{Appendix B: Some algebraic acrobatics}\label{sec:algAcrobatics}

We will perform some algebraic acrobatics to rewrite and equivalent inequality to \eqref{convexity-estimate-entropy1}. Without loss of generality, we will assume that $m=n$, and therefore $[n]\setminus\{m\}=[n-1]$. For $S\in [n-1]$, we have that
$\widehat{f}(S)=\frac12\big (\widehat{f^+}(S)+\widehat{f^-}(S)\big )$, and $\widehat{f}(S\cup\{n\})=\frac12\big (\widehat{f^+}(S)-\widehat{f^-}(S)\big ).$
Hence
we can rewrite the entropy of $f$ as follows
\[ {\bf H}(f) = \sum_{S\subset [n]} \phi \big(\widehat{f}(S)^2\big)= \sum_{S\subset [n-1]} \big [\phi \big (\widehat{f}(S)^2\big )+\phi \big (\widehat{f}(S\cup\{n\})^2\big )\big ].\]
Using Lemma~\ref{lem:binary-entropy}, namely $\phi(x^2)+\phi(y^2)=\phi (x^2+y^2) + (x^2+y^2)\psi \Big ( \frac{x^2}{x^2+y^2}\Big )$, we conclude that
\[{\bf H}(f) = \sum_{S\subset [n-1]} \left [\big (\widehat{f}(S)^2+\widehat{f}(S\cup\{n\})^2\big )\,\psi \bigg (\frac{\widehat{f}(S)^2}{\widehat{f}(S)^2+\widehat{f}(S\cup\{n\})^2}\bigg )+ \phi \big (\widehat{f}(S)^2+\widehat{f}(S\cup\{n\})^2\big )\right ],\]
\begin{eqnarray*}
\mbox{and} \quad \frac12 \big [{\bf H}(f^+) + {\bf H}(f^-)\big ] & = & \frac12 \sum_{S\subset [n-1]} \big (\phi (\widehat{f^+}(S)^2) + \phi (\widehat{f^-}(S)^2)\big ) \\
 &  &\hskip -1.5in =  \, \frac12 \sum_{S\subset [n-1]} \left [ \big (\widehat{f^+}(S)^2 +\widehat{f^-}(S)^2\big )\,\psi \bigg (\frac{\widehat{f^+}(S)^2}{\widehat{f^+}(S)^2 +\widehat{f^-}(S)^2}\bigg )+ \phi \big (\widehat{f^+}(S)^2 +\widehat{f^-}(S)^2\big )\right ],
   \end{eqnarray*}
    Let $\Delta {\bf H}(f)=  {\bf H}(f) -\frac12\big [{\bf H}(f^+)+{\bf H}(f^-)\big ]$.
   Noting that $\widehat{f}(S)^2+\widehat{f}(S\cup\{n\})^2=\frac12(\widehat{f^+}(S)^2+\widehat{f^-}(S)^2)$  (compare to \cite[Proposition 2.3]{WWW14}), and that $\widehat{f}(S)=\frac12(\widehat{f^+}(S)+\widehat{f^-}(S))$, we conclude that
   \begin{eqnarray*}
 \Delta{\bf H}(f)
   &  = & \hskip -.1in \sum_{S\subset [n-1]} \left (  \frac{\widehat{f^+}(S)^2 +\widehat{f^-}(S)^2}{2}\,\left [\psi \bigg (\frac{ \frac12 \big (\widehat{f^+}(S)+\widehat{f^-}(S)\big )^2}{\widehat{f^+}(S)^2 +\widehat{f^-}(S)^2}\bigg ) -\psi \bigg (\frac{\widehat{f^+}(S)^2}{\widehat{f^+}(S)^2+\widehat{f^-}(S)^2}\bigg )\right ]\right. \\
   & & \hskip 1in  \left. +\, \phi \bigg ( \frac{ \widehat{f^+}(S)^2+\widehat{f^-}(S)^2}{2}\bigg ) -\frac12 \phi \big (\widehat{f^+}(S)^2 +\widehat{f^-}(S)^2\big )\right ).
   \end{eqnarray*}
Notice that by \eqref{eq:gx/2=gx/2},
  $\psi (1-x)=\psi (x)$ and $\frac12(c+d)^2+\frac12(c-d)^2=c^2+d^2$, we have that
 \begin{equation}\label{eq:convexityH}
\Delta{\bf H}(f)  = \hskip -.1in
   \sum_{S\subset [n-1]}  \frac{\widehat{f^+}(S)^2 +\widehat{f^-}(S)^2}{2}\left [\psi \bigg (\frac{ \frac12\big (\widehat{f^+}(S)-\widehat{f^-}(S)\big )^2}{\widehat{f^+}(S)^2 +\widehat{f^-}(S)^2}\bigg ) +1 - \psi \bigg (\frac{\widehat{f^+}(S)^2}{\widehat{f^+}(S)^2+\widehat{f^-}(S)^2}\bigg )\right ].
\end{equation}

We will need the following  elementary lemma, proven at the end.

\begin{lemma}\label{lem:elementary}
For all $0\leq t\leq 1$ we have that $0\leq 1-\psi(t)\leq 4\Big (\frac12 -t\big )^2$.
\end{lemma}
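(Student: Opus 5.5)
The lower bound is immediate: the binary entropy satisfies $0\le\psi(t)\le 1$ on $[0,1]$, with maximum $\psi(1/2)=1$. This follows from concavity of $\psi$ (since $\phi''(x)=-\frac{1}{x\ln 2}<0$, so $\psi''(t)=\phi''(t)+\phi''(1-t)<0$ on $(0,1)$), symmetry about $1/2$, and $\psi'(1/2)=0$. Hence $1-\psi(t)\ge 0$.

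For the upper bound, set $s=t-\frac12\in[-\frac12,\frac12]$ and define
\[
h(s)=4s^2-1+\psi\big(\tfrac12+s\big).
\]
The claim is $h\ge 0$. By symmetry of $\psi$ about $1/2$ it suffices to treat $s\in[0,\frac12]$. The endpoints are easy to check: $h(0)=-1+1=0$ and $h(\frac12)=1-1+\psi(1)=0$. So $h$ vanishes at both ends, and the goal becomes showing that $h$ has no negative interior minimum.

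For the interior I would use a concavity argument. Differentiating,
\[
h''(s)=8+\psi''\big(\tfrac12+s\big)=8-\frac{1}{\ln 2}\,\frac{1}{(\frac12+s)(\frac12-s)}=8-\frac{1}{\ln 2\,(\frac14-s^2)}.
\]
This is decreasing in $s\in[0,\frac12)$. At $s=0$ we get $h''(0)=8-\frac{4}{\ln 2}\approx 8-5.77>0$, while $h''\to-\infty$ as $s\to\frac12$. So $h$ is convex on $[0,s_0]$ and concave on $[s_0,\frac12]$ for a single inflection point $s_0$. Since $h'(0)=8s+\psi'(\frac12+s)\big|_{s=0}=0$ and $h''(0)>0$, the function $h$ increases from $h(0)=0$ on $[0,s_0]$, so $h\ge 0$ there. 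On $[s_0,\frac12]$ the function $h$ is concave with $h(s_0)\ge 0$ and $h(\frac12)=0$, so $h$ lies above the chord joining these two points, which is nonnegative. Hence $h\ge 0$ on all of $[0,\frac12]$, which gives $1-\psi(t)\le 4(\frac12-t)^2$.

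The only delicate point is the sign of $h''(0)$. It requires $8>4/\ln 2$, that is $\ln 2>\frac12$, which holds. I expect this to be the main check; the rest of the argument is routine single-variable calculus, handled by the convex-then-concave structure rather than by locating the inflection point explicitly.
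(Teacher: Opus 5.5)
Your argument is correct, but it takes a different route from the paper. The paper substitutes $t=\frac12(1+\epsilon)$ and uses $\ln\frac{1}{1-x}=\sum_{n\ge 1}x^n/n$ to obtain the exact expansion
\[
1-\psi(t)=\frac{1}{\ln 2}\sum_{k\ge 1}\frac{\epsilon^{2k}}{(2k-1)2k}.
\]
Both bounds follow at once from this. The coefficients are nonnegative, which gives the lower bound. The estimate $\epsilon^{2k}\le\epsilon^2$ together with $\sum_{k\ge1}\frac{1}{(2k-1)2k}=\ln 2$ gives $1-\psi(t)\le\epsilon^2=4(\frac12-t)^2$.

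You instead get the lower bound separately from concavity of $\psi$. For the upper bound you run a convex-then-concave analysis of $h(s)=4s^2-1+\psi(\frac12+s)$, pinned down by $h(0)=h(\frac12)=0$ and $h'(0)=0$. The only numerical input is $h''(0)=8-4/\ln 2>0$.

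Your version is pure single-variable calculus and handles the endpoints $t\in\{0,1\}$ directly. The paper's series is stated for $|\epsilon|<1$, so it tacitly needs a continuity step at $|\epsilon|=1$.

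The series route is shorter and more informative, though. It shows that $(1-\psi(t))/\epsilon^2$ is nondecreasing in $|\epsilon|$. The ratio increases from the limiting value $\frac{1}{2\ln 2}$ as $t\to\frac12$ up to $1$ at $t\in\{0,1\}$. Hence the constant $4$ is sharp, and the matching lower bound $1-\psi(t)\ge\frac{2}{\ln 2}(\frac12-t)^2$ comes for free.

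Your check $8>4/\ln 2$ is precisely the inequality $\frac{1}{2\ln 2}<1$ between these two extreme values of that ratio.
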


We now find an equivalent formulation of   \eqref{convexity-estimate-entropy1}. 
 Lemma~\ref{lem:elementary}   implies that for all $S\subset [n-1]$
\[ 0\leq \frac12 \big (\widehat{f^+}(S)^2+\widehat{f^-}(S)^2\big )\bigg (1-\psi \bigg (\frac{\widehat{f^+}(S)^2}{\widehat{f^+}(S)^2+\widehat{f^-}(S)^2}\bigg )\bigg)\leq \big (\widehat{f^+}(S)-\widehat{f^-}(S)\big )^2,\]
and  when adding over $S\in [n-1]$ it always satisfies the desired estimate
\[ \sum_{S\subset [n-1]}\frac12\big (\widehat{f^+}(S)^2+\widehat{f^-}(S)^2\big )\bigg (1-\psi \bigg (\frac{\widehat{f^+}(S)^2}{\widehat{f^+}(S)^2+\widehat{f^-}(S)^2}\bigg )\bigg)\leq \sum_{S\subset [n-1]}\big (\widehat{f^+}(S)-\widehat{f^-}(S)\big )^2.\]
Therefore, in light of \eqref{eq:convexityH}, estimate~\eqref{convexity-estimate-entropy1} holds if an only if
\begin{equation*} 
 \sum_{S\subset [n-1]}\frac12(\widehat{f^+}(S)^2+\widehat{f^-}(S)^2)\,\psi \bigg (\frac{\frac12\big (\widehat{f^+}(S)-\widehat{f^-}(S)\big )^2}{\widehat{f^+}(S)^2+\widehat{f^-}(S)^2}\bigg )\leq K' \sum_{S\subset [n-1]}\big (\widehat{f^+}(S)-\widehat{f^-}(S)\big )^2,
\end{equation*}
where $K=K'+1$. 
This is~\eqref{final-inequality}, as claimed.

\begin{proof}[Proof of Lemma~\ref{lem:elementary}]
Let $t=\frac12(1+\epsilon)$ for $|\epsilon|\leq 1$, so that $4\big(\frac12-t\big )^2=\epsilon^2$, and
\begin{eqnarray*}
 1-\psi(t)  & = & 1-\frac12(1+\epsilon) \log \frac{2}{1+\epsilon}-\frac12(1-\epsilon) \log \frac{2}{1-\epsilon} \\
               & = & \frac{-1}{2\ln 2}\Big [(1+\epsilon) \ln \frac{1}{1+\epsilon}+(1-\epsilon) \ln \frac{1}{1-\epsilon}\Big ].
\end{eqnarray*}               
  Now, $\ln\frac{1}{1-t}=\sum_{n=1}^{\infty} \frac{t^n}{n}$ for all $|t|<1$, therefore
 \begin{eqnarray*}
 1-\psi(t)  & = &  \frac{-1}{2\ln 2}\Big [(1+\epsilon)\sum_{n=1}^{\infty} \frac{(-1)^n\epsilon^n}{n} +(1-\epsilon) \sum_{n=1}^{\infty} \frac{\epsilon^n}{n}\Big ]\\
               & = &  \frac{1}{\ln 2}\Big [\sum_{k=1}^{\infty} \epsilon^{2k}\Big [\frac{1}{2k-1}-\frac{1}{2k}\Big ]  \Big ]
                \, = \, \frac{1}{\ln 2}\sum_{k=1}^{\infty} \epsilon^{2k} \frac{1}{(2k-1)2k}.
 \end{eqnarray*}              
  Since $|\epsilon |\leq 1$ and $ \sum_{k=1}^{\infty}  \frac{1}{(2k-1)2k} =\ln 2$, this  implies that 
 \[ 0\leq 1-\psi(t) \leq \frac{\epsilon^2}{\ln 2} \sum_{k=1}^{\infty}  \frac{1}{(2k-1)2k}  = \epsilon^2 = 4\Big (\frac12 -t \Big )^2.\]
    
\end{proof}


%


\begin{thebibliography}{09}

\bibitem[ACK+20]{ACK+20} S. Arunachalam, S. Chakraborty, M. Kouch\'y, N. Saurabh, and R. de Wolf, \emph{Improved Bounds on Fourier Entropy and Min-Entropy}. In 37th International Symposium on Theoretical Aspects of Computer Science (STACS 2020). Leibniz International Proceedings in Informatics (LIPIcs), Volume {\bf 154}, pp. 45:1--45:19, Schloss Dagstuhl – Leibniz-Zentrum f\"ur Informatik (2020). 

\bibitem[BL90]{BL90} M. Ben-Or and N. Linial, \emph{Collective coin flipping.} In Silvio Micali, editor, Randomness and Computation. Academic Press, New York, 1990.

\bibitem[CKL+13]{CKL+13} S. Chakraborty, R. Kulkarni, S. Lokam, and N. Saurabh, \emph{Upper bounds on Fourier entropy.} In Electronic Colloquium on Computational Complexity TR13-052, 2013.

\bibitem[FK96]{FK96} E. Friedgut and G. Kalai, \emph{Every monotone graph property has a sharp threshold.} Proc. AMS, {\bf 124}(10):2993--3002, 1996.

\bibitem[GMcP25]{GMcP25} M. J. Gonz\'alez, Paul MacManus, M. C. Pereyra, \emph{Las funciones booleanas y el lema de Bonami}. La Gaceta de la RME {\bf 28}, N\'um. 1, 51--88, 2025.

\bibitem[Han25]{Han25} X. Han, \emph{A New Bound for the Fourier-Entropy-Influence Conjecture}. Combinatorica (2025) 45:4.

\bibitem[KKL88]{KKL88} J. Kahn, G. Kalai, and N. Linial, \emph{The influence of variables on Boolean functions.} In Proceedings of the 29th Annual IEEE Symposium on Foundations of Computer Science, pages 68--80, 1988.

\bibitem[KKL+20]{KKL+20} E. Kelman, G. Kindler, N. Lifshitz, D. Minzer, and M. Safra, \emph{Towards a Proof of the Fourier-Entropy Conjecture?}. Geom. Funct. Anal. 30, 1097--1138 (2020). 

\bibitem[KLW10]{KLW10} A. Klivans, H. Lee, and A. Wan, \emph{Mansour’s Conjecture is true for random DNF formulas.} In Proceedings of the 23rd Annual Conference on Learning Theory, 2010.


\bibitem[O'Do21]{O'Do21} R. O'Donnell, \emph{Analysis of Boolean Functions}. Originally published April 2014 by Cambridge University Press. May 2021 arXiv edition - arXiv: 2105.10386v1 

\bibitem[OT13]{OT13} R. O’Donnell and L.-Y. Tan, \emph{A composition theorem for the Fourier Entropy- Influence conjecture.} In Proceedings of the 40th International Colloquium on Automata, Languages and Programming, pages 780--791, 2013.

\bibitem[OWZ11]{OWZ11} R.  O’Donnell, J. Wright, and  Y. Zhou, \emph{The Fourier Entropy–Influence Conjecture for Certain Classes of Boolean Functions.} In: Aceto, L., Henzinger, M., Sgall, J. (eds) Automata, Languages and Programming. ICALP 2011. Lecture Notes in Computer Science {\bf 6755}. Springer, Berlin, Heidelberg, 2011.

\bibitem[WWW14]{WWW14} A. Wan, J. Wright, and C. Wu, \emph {Decision Trees, Protocols, and
the Fourier Entropy-Influence Conjecture}. ITCS’14, 67--80, 2014.

\end{thebibliography}
\end{document}